%% file: EP_noise_cell-level_june26.tex
\documentclass{article}

\usepackage[english]{babel}
\usepackage{authblk} 

\usepackage[letterpaper,top=2cm,bottom=2cm,left=3cm,right=3cm,marginparwidth=1.75cm]{geometry}

\usepackage{cancel}

\usepackage{amsmath}
\usepackage{amsthm}
\usepackage{graphicx}
\usepackage[colorlinks=true, allcolors=blue]{hyperref}
\usepackage{comment}
\usepackage{amsfonts}
\usepackage{amssymb}
\usepackage{graphicx}
\usepackage{epstopdf}
\usepackage{algorithmic}
\usepackage{enumerate}
\usepackage{enumitem}
\usepackage{mathrsfs}
\usepackage{mathtools}
\usepackage{xcolor}
\usepackage{subcaption}
\usepackage{framed}
\usepackage{bbm} 
\allowdisplaybreaks

\begingroup
\newtheorem{theorem}{Theorem}[section]

\newtheorem{lemma}[theorem]{Lemma}
\newtheorem{definition}[theorem]{Definition}
\newtheorem{cor}[theorem]{Corollary}
\newtheorem{remark}{{Remark}}
\newtheorem{example}{Example}

\endgroup

\newcommand{\di}[1]{{\rm div}(#1)}
\newcommand{\n}{\mathbf n}

\newcommand{\ip}[1]{\textcolor{blue}{#1}}
\newcommand{\im}[1]{\textcolor{orange}{#1}}
\newcommand{\todo}[1]{\textcolor{red}{#1}}

\newcommand{\ve}{\varepsilon}

\newcommand{\dd}{\mathrm{d}}

\newcommand{\E}{\mathbb E}
\newcommand{\R}{\mathbb{R}}

\newcommand{\bA}{\mathcal A_{\rm bi}}
\newcommand{\F}{\mathcal F}
\newcommand{\PP}{\mathcal P}

\usepackage{enumitem}

\let\oldthebibliography\thebibliography
\let\endoldthebibliography\endthebibliography

\renewcommand{\thebibliography}[1]{%
  \oldthebibliography{#1}%
  \setlength{\itemsep}{0pt}%
}
\renewcommand{\endthebibliography}{%
  \endoldthebibliography
}

\title{Long-time behavior of a nonlocal and non-monotone SPDE-ODE system arising in electrophysiology}
\author[1]{T. Gebäck}
\author[2]{O. Misiats}
\author[1]{I. Motschan Ulander}
\author[1]{I. Pettersson \thanks{Corresponding author, irinap@chalmers.se}}

\affil[1]{Chalmers University of Technology and Gothenburg University, Sweden}
\affil[2]{Virginia Commonwealth University, USA}

\date{}

\begin{document}
\maketitle

\begin{abstract}
This paper concerns a coupled semilinear SPDE-ODE system modelling the electropermeabilization phenomenon, which designates a transient increase in cell membrane permeability induced by short, high-voltage electric pulses. We present a stochastically perturbed electroporation model that couples electrostatic equations for the electric potential in the extra- and intracellular domains and a nonlinear evolution law for the transmembrane potential jump with an ordinary differential equation describing the porosity degree of the membrane. We prove the existence and uniqueness of a variational solution of the resulting coupled 
stochastic PDE-ODE system. Its long-time behavior is governed by the corresponding invariant measure for which we establish the regularity of its support.
The ergodicity of this invariant measure is further established for a truncated nonlinear reaction term, corresponding to the case of a bounded electric potential. The main technical challenge arises from the nonlinear reaction term, which is neither Lipschitz continuous nor locally monotone. 
We also present a numerical example, computing the solution and its time averages for both additive and multiplicative noise, that provides an indication for the existence of an invariant measure.
\end{abstract}

\textbf{Keywords.}
Stochastically perturbed model of electropermeabilization, stochastic transmission problems, coupled semi-linear SPDE-ODE systems, generalized coercivity,  generalized monotonicity, long-time behavior, invariant measure, stationary solution.\\[2mm]

\textbf{MSC2020.} Primary: 60H15, 35R60, 47A35. Secondary: 35Q92.

\tableofcontents

\section{Introduction}
Electroporation, or more generally electropermeabilization, designates a temporary increase in membrane permeability due to the applied short high-voltage pulses. In recent years, the interest in electropermeabilization has been constantly increasing since
electroporation has 
proved to be an efficient non-thermal ablation technique in cancer and arrhythmias treatments \cite{Al-Sakere_Andre_Bernat_Connault_Opolon_Davalos_Rubinsky_Mir_2007}, \cite{Tasu_Vionnet_Velasco_Lafitte_Poignard_2023}. 
We adopt a dynamic phenomenological model by  Kavian et al. \cite{kavian2014classical}  formulated for a single cell, which is described in Section \ref{sec:deterministic_model}. It consists of electrostatics equations for the electric potential $u$ in a bulk domain, satisfying the conditions of continuity of flux and a nonlinear dynamic condition for the jump of the potential on the interface (membrane). These equations are coupled with a nonlinear ordinary differential equation for $w$ describing the proportion of pores in the membrane (see problem \eqref{eq:microscopic-prob}). The membrane permeabilization is described
by choosing an appropriate function for the surface conductivity $S_m$, instead
of adding an electroporation current based on the pore creation as in Neu, Krassowska, et
al. \cite{neu1999asymptotic}. 

In the present work, in order to account for various  random effects, such as temperature fluctuations or uncertainty in the applied electric field, we add noise to the cell membrane in the electroporation model presented in \cite{kavian2014classical}. The nonlinear coupled system can be written as an abstract parabolic equation on the interface (cell membrane) with a nonlocal operator containing a composition of Dirichlet-to-Neumann maps acting on the jump $v=[u]$ of the potential $u$ through the interface (see \eqref{eq:stochastic_kavian}). To be precise, we consider a bounded $G \subset \mathbb{R}^3$ consisting of two disjoint domains, intracellular $G_{\rm i}$ and extracellular $G_{\rm e}$ part, separated by the interface $\Gamma$ representing cell membranes. Then, for $T> 0$, the pair $(v,w)$ satisfies the following coupled system on $\Gamma$:
\begin{equation}
\label{eq:stochastic_kavian_intro}
\begin{aligned}
c_m \dd v &= -(\mathcal{A} v + S_m(v,w) + {\sigma_{\rm i} \nabla p \cdot \n})\, \dd t + b(v)\, \dd W_t, \quad
v(0,x)=v_0(x) &\quad& \text{on}\,\, (0,T]\times \Gamma,\\
\dd w &= f(v,w)\, \dd t, \quad w(0,x)=w_0(x)&\quad& \text{on}\,\, (0,T]\times \Gamma,
\end{aligned}
\end{equation}
where $c_m>0$, $\mathcal{A}$ is a nonlocal Dirichlet-to-Neumann map defined by \eqref{def:L}-\eqref{eq:microscopic-prob_stationary}, $p$ represents an external excitation which solves \eqref{eq:p}, $f$ is a Lipschitz function given by \eqref{eq:f}, $b$ satisfies \ref{B2}, and $S_m =(S_0 + S_1 w) v$ is a nonlinear non-Lipschitz reaction term modelling the surface conductivity. 
We  prove the existence and uniqueness of a variational solution of \eqref{eq:stochastic_kavian_intro} (see Theorem \ref{th:existence-(0,T)_kavian}) and its properties, such as Markov and Feller property. We also establish the existence of an invariant measure in the autonomous case, describe the regularity of its suppport, and prove its uniqueness for a truncated (Lipschitz) nonlinearity. Further, we define a semigroup and show that the variational solution is also a mild solution. Finally, we compute numerically the solution $(v,w)$ of \eqref{eq:stochastic_kavian_intro} and its time averages for one-dimensional additive and multiplicative noise. The time averages of individual trajectories converge, and the standard deviation decays with increasing final time at the rate $T^{-1/2}$, which illustrates the existence of an invariant measure. 

For the well-posedness of the problem, we adopt the variational approach by Pardoux introduced in \cite{pardouxequations, pardoux2021stochastic}, and further developed by Krylov and Rozovski \cite{krylov2007stochastic}. 
An important generalization of the variational approach is developed by Liu, Röckner, and Prévôt \cite{prevot2007concise,liu2010spde,liu2011existence,liu2013local,liu2015stochastic}. The latter approach attracted a lot of attention, and was employed for establishing various probabilistic properties, such as ergodicity, existence of random attractors, and large diviation principle, among others. 
A distinctive feature of the model is the combination of a nonlocal Dirichlet-to-Neumann operator, which induces global spatial coupling along the interface, with a drift satisfying substantially weaker monotonicity and coercivity conditions than those assumed in classical well-posedness theories for globally or locally monotone SPDEs and for equations with one-sided Lipschitz coefficients (see, e.g., \cite{da2014stochastic,liu2015stochastic} and Section~5 of \cite{liu2015stochastic}). Despite its seemingly simple form, the nonlinear reaction $S_m(v,w)=(S_0 + S_1 w) v$ in the electroporation model \eqref{eq:stochastic_kavian} does not satisfy the local monotonicity assumptions in Section 5.1.3, \cite{liu2015stochastic},  for arbitrary functions from the corresponding functional space. However, the analysis of the coupled ODE problem enables us to establish the fulfilment of similar conditions along the solutions only by means of introducing an appropriately chosen auxiliary equation.  

For the proof of the existence of an invariant measure, we use the classical approach based on the results
of Krylov and Bogoliubov \cite{kryloff1937theorie} on the tightness of a family of measures. To this end, we establish the Markov and Feller property of the transition semigroup, and derive uniform in time estimates for the solution in $L^2(\Gamma)$ and $H^{1/2}(\Gamma)$-norm. Note that for the original non-truncated reaction term $S_m(v,w)=(S_0 + S_1 w) v$, it is not sufficient to prove a uniform in time estimate for $\sup_{t\ge 0} \E \Big[\|v\|_{L^2(\Gamma)}^2 + \|w\|_{L^2(\Gamma)}^2\Big]$, as in the case of Lipschitz nonlinearities in Theorem 6.1.2 \cite{da1996ergodicity}. We prove uniform estimates for the time averages in the stronger $H^{1/2}(\Gamma)$-norm (see Lemma \ref{lm:uniform_time_estimate}), which allows us to use the Krylov-Bogoliubov theorem and establish the existence of an invariant measure by a tightness argument (see Theorem \ref{th:existence_inv_measure}). 
We prove that any invariant measure $\mu$ on $L^2(\Gamma) \times L^2(\Gamma;[0,1])$ is supported on a more regular space $H^{1/2}(\Gamma) \times H^{1/2}(\Gamma; [0,1])$. Moreover, we show that a stationary solution with the law $\mu$ takes values in $H^{1/2}(\Gamma) \times H^{1/2}(\Gamma; [0,1])$ with probability one for each $t\ge 0$.  In general, without uniform in time estimates in strong norm and under generalized monotonicity and coercivity assumptions, one can prove the existence of an invariant measure only in finite-dimensional case, as it is done in Proposition 4.3.5 \cite{liu2015stochastic}.

Classically, the long time behavior is studied for dissipative systems or, in other words, under strict monotonicity condition. According to Doob's theorem (see Theorem 4.2.1 in \cite{da1996ergodicity}), uniqueness of invariant measure is then a consequence of $t_0$-regularity (see Section 4.1, \cite{da1996ergodicity}), which follows from the strong Feller property and irreducibility for some $t>0$. The uniqueness of invariant measure for variational solutions under strict monotonicity assumption can be found in Theorem 4.3.9 \cite{liu2015stochastic}. Without strong dissipativity of the drift, the uniqueness of invariant measure is not guaranteed, and has been established in some cases under weak dissipativity assumption (see e.g. \cite{liu2011weak_dissipative} and references therein). In 
In our case, we show the uniqueness of the invariant measure for a truncated nonlinear reaction term (see \eqref{eq:S_M}) which corresponds to a bounded electric potential. This assumption is reasonable from the application perspective.
For the truncated problem, the uniqueness will follow from the exponential stability of the system, which holds if the Lipschitz constants are sufficiently small (see assumptions of Theorem \ref{th:existence_inv_measure}). In this case, we prove also the existence of a complete stationary solution $(U^\ast)_{t\in \mathbb R}$ (see Theorem \ref{th:inv_measure_truncated}).

The existence of an invariant measure for stochastically forced semilinear parabolic partial differential equations of the Ginzburg-Landau type was shown in \cite{eckmann2001invariant}.
For the long-time behavior of stochastic reaction-diffusion equations, we refer also to 
\cite{misiats2016existence} and \cite{misiats2020invariant} where the existence and uniqueness of an invariant measure was proved for stochastic reaction-diffusion equations in unbounded domains and with weakly dissipative nonlinearities using Krylov-Bogoliubov approach. In \cite{glatt2014existence}, the long-time behavior of strong solutions of the stochastic three-dimensional primitive equations was studied. In addition to the ergodicity, it was proved that the invariant measure is supported on strong solutions.
An alternative coupling technique approach for proving the existence of an invariant measure was suggested by Mueller \cite{mueller1993coupling}, where a one-dimensional stochastic heat equation was studied. The technique can also be used for space-time white noise.
Stochastic reaction-diffusion systems with non-Lipschitz reaction term was considered, for example, in \cite{Cerrai_2003}. The authors establish the existence and uniqueness of a mild solution in the space of continuous functions and existence of an invariant measure. The ergodicity for mild solutions of SPDE systems with multiplicative noise was considered in \cite{cerrai2006asymptotic}.

A closely related nonlinear coupled system arises in the bidomain model of the heart tissue. The bidomain model can be derived by the homogenization methods from the cell-scale equations describing the distribution of the electric potential (see e.g. \cite{franzone2002degenerate}, \cite{amar2013hierarchy}, \cite{jerez2023derivation} for derivation in the deterministic case and \cite{Pettersson_Rybalko_Rybalko_2025} for the case of random geometry). 
Well-posedness and long-time behavior for bidomain systems in stochastic setting for various classes of ionic models including FitzHugh--Nagumo, Aliev--Panfilov, and Rogers--McCulloch were obtained in several works. We refer to \cite{bendahmane2019stochastically} for the existence and uniqueness of a martingale solution. In  \cite{hieber2020bidomain}, the authors proved the existence and uniqueness of a global weak solution, and characterized its long time behavior (i.e. existence of invariant measures). The extension of these results to the case of strong solutions was accomplished in \cite{kapustyan2022strong} in dimension two and \cite{hieber2023global} in dimension three. In \cite{bendahmane2024stochastic}, the authors  proposed a stochastic electromechanical bidomain model and established the existence of weak solutions.

The paper is organized as follows. In Section \ref{sec:preliminaries_and_main_result} we formulate the problem in the deterministic setting, introduce a stochastically perturbed version of it, and formulate the main results in Section \ref{sec:main_results}. Theorem \ref{th:existence-(0,T)_kavian} on the existence and uniqueness of a variational solution is proved in Section \ref{sec:proof_Th_existence_solution}. In Section \ref{sec:properties_of_solution} we establish uniform in time bounds for the solution and prove the Markov and Feller properties of the variational solution, as well as show that this solution is also mild. Section \ref{sec:existence_inv_measure} is devoted to the existence of an invariant measure for the non-Lipschitz reaction term and the regularity of its support. Motivated by applications, we study the model with a truncated nonlinearity, which corresponds to the case when the electric potential is bounded. In Section \ref{sec:inv_measure_truncated_reaction} we prove that in this case an invariant measure exists and is unique, and thus ergodic. Finally, in Section \ref{sec:numerics} we present the numerical computations of the solution and its time averages for additive and multiplicative noise.

\section{Preliminaries and main results}
\label{sec:preliminaries_and_main_result}

\subsection{Deterministic electroporation model on the cell-scale}
\label{sec:deterministic_model}
On the cell level, we adopt the model of the electropermeabilization phenomenon presented in \cite{kavian2014classical}. 
Let $G \subset \mathbb{R}^3$ be a Lipschitz domain that  consist of two disjoint domains, intracellular $G_{\rm i}$ and extracellular $G_{\rm e}$ part, separated by the interface $\Gamma$ representing cell membranes (see Figure \ref{fig:Y}), which assumed to be Lipschitz continuous. The interface $\Gamma$ might have one or more connected components, that is the domain $G$ might contain several cells. We assume that $G_{\rm i}$ does not intersect the boundary of $G$. 
\begin{figure}[htp]
\centering
    \def\svgwidth{0.28\textwidth}
    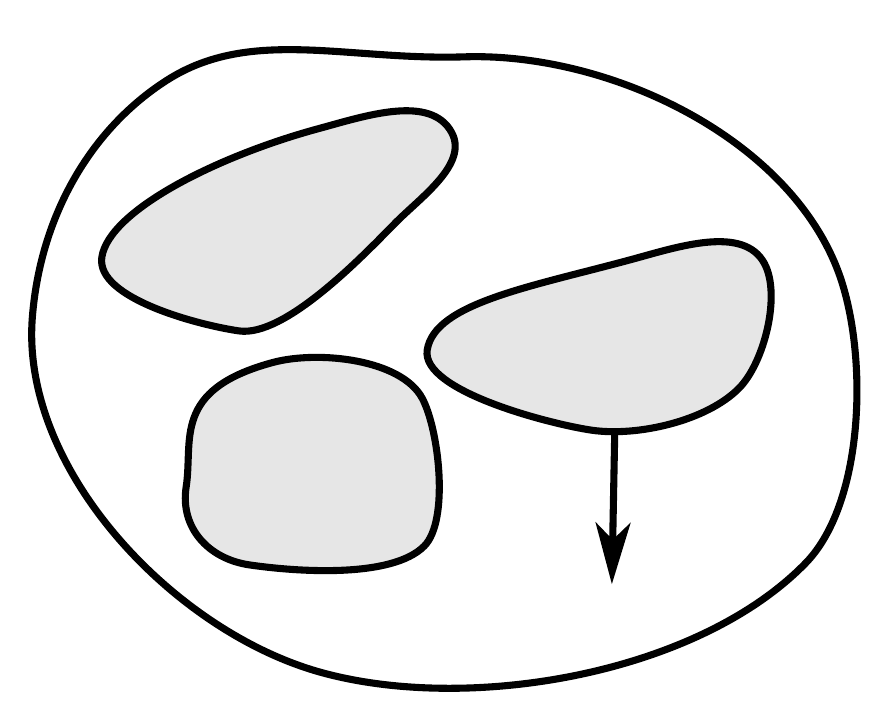
\caption{$G=G_{\rm i} \cup \Gamma \cup G_{\rm e}$.}
\label{fig:Y}
\end{figure}
The conductivity of the medium and the electric potential are denoted by
\begin{align*}
\sigma (x)= \begin{cases}
 \sigma_{\rm i}(x) \,\,\mbox{in}\,\,     G_{\rm i},\\
  \sigma_{\rm e}(  x) \,\,\mbox{in}\,\,     G_{\rm e}
\end{cases}
,
\quad 
u(  x)=\begin{cases}u_{\rm i}(  x) \,\,\mbox{in}\,\,     G_{\rm i}\\
u_{\rm e}(  x) \,\,\mbox{in}\,\,     G_{\rm e}
\end{cases}.
\end{align*}
The electric potential is discontinuous across the membrane, and its jump across the membrane $[u] := u_{\rm i}\big|_{  \Gamma}-u_{\rm e}\big|_{  \Gamma}$ satisfies the following transmission condition on $\Gamma$:
\begin{align*}
       c_m \partial_{  t} [u] +   S_m([u],w) = -    \sigma_{\rm i} \nabla u_{\rm i} \cdot \n= - \sigma_{\rm e} \nabla u_{\rm e} \cdot \n.
\end{align*}
Similar to $[u]$, the jump of the current is denoted by $[\sigma \nabla u \cdot \n]= \sigma_{\rm i} \nabla u_{\rm i} \cdot \n \big|_{\Gamma} - \sigma_{\rm e} \nabla u_{\rm e} \cdot \n \big|_{\Gamma}$, and $\n$ is the unit normal vector exterior to $G_{\rm i}$.
Here $S_m([u],w)=(S_0+S_1 w)[u]$. The factor $S_0+S_1 w$ is interpreted as the surface conductivity of the membrane.
The time-dependent membrane conductivity is modelled by interpolating two values, the lipid conductivity $S_0$ and the value $  S_1+S_0$ above which the permeabilization is not reversible. The interpolation parameter $w$ solves the following initial value problem on $  \Gamma$:\\
\begin{align}
    \label{eq:f}
    \begin{split}
    &\partial_t w
    = \max \left(\frac{\beta([u])-w}{  \tau_{\rm ep}}\, ,\,
    \frac{\beta([u])-w}{  \tau_{\rm res}}\right) =: f([u], w),\quad
    w(0,  x)=w_0(x).
    \end{split}
\end{align}
The positive constants $\tau_{\rm ep}$ and $\tau_{\rm res}$ designate characteristic electroporation and resealing time, respectively, with $\tau_{\rm ep}$ being several orders of magnitude smaller than $\tau_{\rm res}$.
The unknown function $w$ is interpreted as the degree of permeability of the cell membrane.
For a given jump of the membrane potential $[u]$, if $\beta([u]) - w \geq 0$ the electric pulse on the membrane is high enough to increase the membrane permeability, the degree of porosity $w$ starts growing, and the characteristic electroporation time is  $  \tau_{\rm ep}$. If, on the other hand, $\beta([u]) - w < 0$, the membrane is resealing in time $\tau_{\rm res}$. 
We refer to  \cite{kavian2014classical} and \cite{leguebe2014conducting} for specific choices of $\beta$ in the context of modelling the electropermeabilization phenomenon. The assumptions on $\beta$ are summarized in \ref{assumption:f_beta}.

The discontinuous electric potential satisfies the electrostatics equations in the intra- and extracellular medium, and solves the following coupled time-dependent transmission problem:
\begin{align}
\label{eq:microscopic-prob}
\begin{alignedat}{2}
-\di{\sigma \nabla u}  &= 0 &\quad& \mbox{in}\,\, (0,T]\times (G_{\rm i} \cup G_{\rm e}), \\
[\sigma \nabla u  \cdot \n] &= 0&\quad&\mbox{on}\,\, (0,T]\times\Gamma, \\
- \sigma_{\rm i} \nabla u \cdot \n &= c_m \partial_t [u] + S_m([u],w)
+ { \sigma_{\rm i} \nabla p \cdot \n}
&\quad&\mbox{on}\,\,(0,T]\times\Gamma,
\\
\partial_t w
    &= f([u], w)&\quad&\mbox{on}\,\,(0,T]\times\Gamma, \\
[u](0,x) &=v_0(x), \quad w(0,x)=w_0(x)&\quad&\mbox{on}\,\,\Gamma,\\
u   &= 0&\quad&\mbox{on}\,\, (0,T]\times\partial G,
\end{alignedat}
\end{align}
where $p$, for each $t\in [0,T]$, solves the nonhomogeneous Dirichlet problem
\begin{align}
\label{eq:p}
\begin{alignedat}{2}
\di{\sigma \nabla p} &= 0 &\quad& \text{in } G,\\
p &= g &\quad& \text{on } \partial G .
\end{alignedat}
\end{align}
In this way, the voltage $g$ applied of the boundary on the domain is transferred to the interface $\Gamma$, and the term $\sigma_{\rm i} \nabla p \cdot \n$ represents the external excitation. Note that the estimates for the solution will be given in terms of the norm of $g$, the external excitation on the boundary $\partial G$.  

We impose the following assumptions on the coefficients:
\begin{enumerate}[label=(A\arabic*)]
\setcounter{enumi}{0}
\item \label{assumption_sigma}  $\sigma(x)=\begin{cases}
    \sigma_{\rm i}(x) \, \mbox{in} \, G_{\rm i}\\
    \sigma_{\rm e}(x) \, \mbox{in} \, G_{\rm e}
\end{cases}$ with $\sigma_{i,e} \in L^\infty(G)$, $0< \underline{\sigma}\le \sigma \le \overline{\sigma}<\infty$. 

\item \label{assumption:f_beta} $f$ is defined in \eqref{eq:f} where $\beta$ is even, non-decreasing, 
$0 \leq \beta(\xi) \leq 1$, and Lipschitz continuous
\begin{align*}
|\beta(v_1)-\beta(v_2)| \le L_\beta |v_1-v_2|, \quad v _1, v_2 \in \mathbb{R}.
\end{align*}

\item \label{assumption:g} $g\in C([0,T]; H^{1/2}(\partial G))$.

\item \label{assumption:initial_conditions} $v_0, w_0 \in L^2(\Gamma), 0\le w_0 \le 1$.

\end{enumerate}
We note that the Lipschitz continuity of $\beta$ is equivalent to the Lipschitz continuity of the function $f$ defined in \eqref{eq:f}.

We may now rewrite \eqref{eq:microscopic-prob} as an evolution equation on $\Gamma$. To this end, we introduce a Dirichlet-to-Neumann-type operator $\mathcal{A} : D(\mathcal{A}) \subset H^{1/2}(\Gamma) \to H^{-1/2}(\Gamma)$ by setting, for each  $\phi\in H^{1/2}(\Gamma)$, \begin{align}
\label{def:L}
    \langle \mathcal{A} v , \phi \rangle_{H^{-1/2}(\Gamma), H^{1/2}(\Gamma)} = \int_{G_{\rm i} \cup G_{\rm e}} \sigma \nabla z^{(v)} \cdot \nabla z^{(\phi)} \, \dd x,
\end{align}
where 
$z^{(q)}=(z_i^{(q)}, z_e^{(q)}) \in H^1(G_{\rm i}) \times H^1( G_{\rm e})$, for a given jump $q \in H^{1/2}(\Gamma)$ ($q=v, \phi$), solves the following problem:
\begin{align}
\label{eq:microscopic-prob_stationary}
\begin{alignedat}{2}
-\di{\sigma \nabla z^{(q)}}  &=0 &\quad& \mbox{in}\,\, G_{\rm i} \cup G_{\rm e}, \\
[\sigma \nabla z^{(q)}  \cdot \n] &= 0 &\quad&\mbox{on}\,\, \Gamma,\\
[z^{(q)}]&= q
&\quad&\mbox{on}\,\,\Gamma,
\\
z^{(q)}   &= 0&\quad&\mbox{on}\,\, \partial G.
\end{alignedat}
\end{align}
The lemma below presents the key properties of the nonlocal operator $\mathcal{A}$ used throughout the paper.
\begin{lemma}
\label{lm:L}
The operator $\mathcal{A}$ defined by \eqref{def:L} is symmetric, non-negative, and satisfies the following properties: 
\begin{itemize}
\item[(i)] For any $\alpha > 0$, there exists $\Lambda >0$ depending on $\alpha$ such that, for any $v \in H^{1/2}(\Gamma)$,
\[
\langle \mathcal{A} v , v\rangle_{H^{-1/2}(\Gamma), H^{1/2}(\Gamma)} + \alpha \|v\|_{L^2(\Gamma)}^2\ge \Lambda \|v\|^2_{H^{1/2}(\Gamma)}.
\]
\item[(ii)] 
$
|\langle \mathcal{A} v , \varphi\rangle_{H^{-1/2}(\Gamma), H^{1/2}(\Gamma)}|
\leq C \| v \|_{H^{1/2}(\Gamma)} \| \varphi\|_{H^{1/2}(\Gamma)}$, for any $v, \varphi \in H^{1/2}(\Gamma)$.
\item[(iii)] 
$
\|\mathcal{A} v\|_{H^{-1/2}(\Gamma)}
\le C\| v \|_{H^{1/2}(\Gamma)}$, for any $v, \varphi \in H^{1/2}(\Gamma)$.
\end{itemize}
\end{lemma}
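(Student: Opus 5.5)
The plan is to reduce everything to properties of the lifting map $q \mapsto z^{(q)}$ defined by \eqref{eq:microscopic-prob_stationary}. The first step is well-posedness and boundedness of this map from $H^{1/2}(\Gamma)$ into the space
\[
V := \{ z=(z_i,z_e)\in H^1(G_{\rm i})\times H^1(G_{\rm e}) : z_e=0 \text{ on } \partial G\}.
\]
Given $q\in H^{1/2}(\Gamma)$, I take a bounded right inverse of the trace on $\Gamma$, namely an extension $E q \in H^1(G_{\rm i})$ with $\|Eq\|_{H^1(G_{\rm i})}\le C\|q\|_{H^{1/2}(\Gamma)}$. This uses that $\Gamma=\partial G_{\rm i}$ is Lipschitz and that $G_{\rm i}$ is compactly contained in $G$. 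I then extend $Eq$ by $0$ into $G_{\rm e}$ and look for $z^{(q)} = (Eq,0) + y$, where $y\in H^1_0(G)$ (continuous across $\Gamma$) solves
\[
\int_{G_{\rm i}\cup G_{\rm e}} \sigma\nabla y\cdot\nabla \psi\,\dd x = -\int_{G_{\rm i}}\sigma\nabla Eq\cdot\nabla\psi\,\dd x \quad \text{for all } \psi\in H^1_0(G).
\]
The weak form automatically encodes the flux condition $[\sigma\nabla z\cdot\n]=0$. By Lax--Milgram, with \ref{assumption_sigma} and Poincaré in $H^1_0(G)$, we get $\|z^{(q)}\|_{V}\le C\|q\|_{H^{1/2}(\Gamma)}$. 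Uniqueness of $z^{(q)}$ follows since the difference of two solutions lies in $H^1_0(G)$ and is $\sigma$-harmonic. By uniqueness, $q\mapsto z^{(q)}$ is linear.

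With this in hand, the easy properties follow directly. The form \eqref{def:L} is visibly symmetric in $(v,\phi)$ and gives $\langle\mathcal A v,v\rangle=\int\sigma|\nabla z^{(v)}|^2\ge 0$. For (ii), Cauchy--Schwarz and $\overline\sigma$ give $|\langle\mathcal A v,\varphi\rangle|\le \overline\sigma\|\nabla z^{(v)}\|_{L^2}\|\nabla z^{(\varphi)}\|_{L^2}\le C\|v\|_{H^{1/2}}\|\varphi\|_{H^{1/2}}$. Property (iii) is (ii) restated via the definition of the dual norm, taking the supremum over $\|\varphi\|_{H^{1/2}(\Gamma)}\le1$.

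The main obstacle is (i), a Gårding-type lower bound, which needs the reverse inequality
\[
\|v\|_{H^{1/2}(\Gamma)}^2 \le C\big(\|\nabla z^{(v)}\|^2_{L^2(G_{\rm i}\cup G_{\rm e})} + \|v\|_{L^2(\Gamma)}^2\big).
\]
I would argue as follows. By the trace theorem applied separately in $G_{\rm i}$ and $G_{\rm e}$,
\[
\|v\|_{H^{1/2}(\Gamma)}\le \|z_i\|_{H^{1/2}(\Gamma)}+\|z_e\|_{H^{1/2}(\Gamma)}\le C\big(\|z_i\|_{H^1(G_{\rm i})}+\|z_e\|_{H^1(G_{\rm e})}\big).
\]
In $G_{\rm e}$, $z_e$ vanishes on $\partial G$, so Poincaré controls $\|z_e\|_{H^1(G_{\rm e})}$ by $\|\nabla z_e\|_{L^2}$, at least on each component of $G_{\rm e}$ touching $\partial G$. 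If some component of $G_{\rm e}$ does not touch $\partial G$, I treat it like $G_{\rm i}$ below. In $G_{\rm i}$, the Poincaré--Wirtinger/trace inequality $\|z_i\|_{L^2(G_{\rm i})}\le C(\|\nabla z_i\|_{L^2(G_{\rm i})}+\|z_i\|_{L^2(\Gamma)})$ holds on each connected component. Moreover, on $\Gamma$ we have $z_i=v+z_e$, with $\|z_e\|_{L^2(\Gamma)}\le C\|\nabla z_e\|_{L^2(G_{\rm e})}$ by the trace theorem and the previous step. Combining these gives the reverse inequality. Then (i) holds for any $\alpha>0$ with $\Lambda=\min(\underline\sigma,\alpha)/C$.

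I expect this to in fact give coercivity even with $\alpha=0$. For the stated form, however, the $L^2(\Gamma)$ term is the safe route, as a fallback if the exterior Poincaré step is delicate for the geometry.
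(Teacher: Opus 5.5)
Your proof is correct, and for (i) it follows a different route from the paper's.

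\textbf{How the two proofs of (i) differ.} The paper first proves a G\aa rding inequality modulo the kernel, $\langle \mathcal{A}v,v\rangle \ge \lambda \inf_{\kappa\in\ker\mathcal{A}}\|v-\kappa\|^2_{H^{1/2}(\Gamma)}$. It does this with Poincar\'e--Wirtinger around the means $c_j$ of $z_i^{(v)}$ on each component $G_{i,j}$ and Poincar\'e in $G_{\rm e}$. It then uses $\ker\mathcal{A}=\mathrm{span}\{\mathbbm{1}_{\Gamma_1},\dots,\mathbbm{1}_{\Gamma_N}\}$ and a normalization/contradiction argument: by equivalence of norms on this finite-dimensional kernel, the term $\alpha\|v\|^2_{L^2(\Gamma)}$ absorbs the kernel component. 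You instead combine a Friedrichs-type inequality with boundary trace on each $G_{i,j}$ and the transmission identity $z_i=v+z_e$ on $\Gamma$. This gives $\|v\|^2_{H^{1/2}(\Gamma)}\le C\bigl(\|\nabla z^{(v)}\|^2_{L^2(G_{\rm i}\cup G_{\rm e})}+\|v\|^2_{L^2(\Gamma)}\bigr)$ directly, and hence the explicit constant $\Lambda=\min(\underline\sigma,\alpha)/C$ with $C$ independent of $\alpha$.

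\textbf{What each route buys.} Yours gives quantitative dependence of $\Lambda$ on $\alpha$, which the contradiction argument does not. The paper's route exhibits the kernel, and so shows why some $\alpha>0$ is needed at all. For (ii)--(iii) the two arguments are the same in substance. The paper uses the energy-minimizing property of $z^{(q)}$ with the lifting given by the trace extension in $G_{\rm i}$ and zero in $G_{\rm e}$, which is your Lax--Milgram bound. Your construction additionally proves existence, uniqueness and linearity of $q\mapsto z^{(q)}$, which the paper takes for granted.

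\textbf{Your closing remark is false.} Coercivity with $\alpha=0$ fails. Take $v=\mathbbm{1}_{\Gamma_j}$ with $\Gamma_j=\partial G_{i,j}$. Its lifting is $z^{(v)}=(\mathbbm{1}_{G_{i,j}},0)$, which has zero energy, so $\langle\mathcal{A}v,v\rangle=0$ while $\|v\|_{H^{1/2}(\Gamma)}>0$. The $\|v\|_{L^2(\Gamma)}$ term is therefore not a fallback for the exterior Poincar\'e step. That step is fine; for components of $G_{\rm e}$ not touching $\partial G$, your ``treat it like $G_{\rm i}$'' works if you order components by nesting depth. The term is indispensable in the interior step: there $\|z_i\|_{L^2(\Gamma)}$ must control the constant mode on each $G_{i,j}$, and that quantity can only be bounded through $\|v\|_{L^2(\Gamma)}$.
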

\begin{proof}
(i) Denote $\Gamma = \cup_{j=1}^N \Gamma_j$, where $\Gamma_j$ are the $N$ connected components of $\Gamma$. Then the intracellular domain $G_{\rm i}=\cup_{j=1}^N G_{i,j}$ has the same number of connected components. The kernel of $\mathcal{A}$ is ${\rm ker} \mathcal{A} = {\rm span} \{\mathbbm{1}_{\Gamma_1}, \ldots, \mathbbm{1}_{\Gamma_N} \}$, where the characteristic function $\mathbbm{1}_{\Gamma_j}$ is equal to $1$ on $\Gamma_j$ and zero otherwise. 
We start by proving that there exists $\lambda > 0$ such that, for any $v\in H^{1/2}(\Gamma)$,
\begin{align*}
\langle \mathcal{A} v , v \rangle_{H^{-1/2}(\Gamma), H^{1/2}(\Gamma)}
\ge \lambda \inf_{\kappa \in {\rm ker} \mathcal{A}} \|v- \kappa\|_{H^{1/2}(\Gamma)}^2.
\end{align*}
Let $z^{(v)}=(z_i^{(v)},z_e^{(v)})$ be the solution of the transmission
problem \eqref{eq:microscopic-prob_stationary} with jump $q = v$. 
For each connected component $G_{i,j}$, define the average
\[
c_j=
\frac1{|G_{i,j}|}
\int_{G_{i,j}} z_i^{(v)}\,dx,
\]
and set
\[
\kappa=\sum_{j=1}^N c_j\mathbbm{1}_{\Gamma_j}\in {\rm ker} \mathcal{A}.
\]
On $\Gamma_j$ we have
\[
v-\kappa
=
(z_i^{(v)}-c_j)|_{\Gamma_j}
-
z_e^{(v)}|_{\Gamma_j}.
\]
Hence, by the trace theorem,
\[
\|v-\kappa\|_{H^{1/2}(\Gamma)}
\leq
C
\left(
\sum_{j=1}^N
\|z_i^{(v)}-c_j\|_{H^1(G_{i,j})}^2
\right)^{1/2}
+
C\|z_e^{(v)}\|_{H^1(G_{\rm e})}.
\]
By the Poincaré-Wirtinger inequality on each $G_{i,j}$,
\[
\|z_i^{(v)}-c_j\|_{H^1(G_{i,j})}
\leq
C
\|\nabla z_i^{(v)}\|_{L^2(G_{i,j})}.
\]
Moreover, since $z_e^{(v)}=0$ on $\partial G$, the Poincaré inequality on
$G_{\rm e}$ gives
\[
\|z_e^{(v)}\|_{H^1(G_{\rm e})}
\leq
C
\|\nabla z_e^{(v)}\|_{L^2(G_{\rm e})}.
\]
Therefore
\[
\|v-\kappa\|_{H^{1/2}(\Gamma)}
\leq
C
\|\nabla z^{(v)}\|_{L^2(G_{\rm i}\cup G_{\rm e})}.
\]
Since $\kappa\in {\rm ker}\, \mathcal{A}$, it follows that
\[
\inf_{\eta\in {\rm ker} \mathcal{A}}
\|v-\eta\|_{H^{1/2}(\Gamma)}
\leq
C
\|\nabla z^{(v)}\|_{L^2(G_{\rm i}\cup G_{\rm e})}.
\]
Combining this estimate with the ellipticity of $\sigma$, we obtain
\begin{align}
\label{Gaarding}
\langle \mathcal A v,v\rangle_{H^{-1/2}(\Gamma), H^{1/2}(\Gamma)}
\geq
\lambda
\inf_{\kappa\in {\rm ker} \, \mathcal{A}}
\|v-\kappa\|_{H^{1/2}(\Gamma)}^2
\end{align}
for some $\lambda>0$.
The inequality in (i)  holds for any $\alpha, \Lambda > 0$ if $v = 0$ a.e. on $\Gamma$, as $0 \in H^{1/2}(\Gamma)$ belongs to the kernel of $\mathcal{A}$.
Suppose now that (i) does not hold for $v\neq 0$. Then, there exists $\alpha>0$ such that for any $\Lambda$, there exists $v_\Lambda$ satisfying
\[
\langle \mathcal{A} v_\Lambda , v_\Lambda\rangle_{H^{-1/2}(\Gamma), H^{1/2}(\Gamma)} + \alpha \|v_\Lambda\|_{L^2(\Gamma)}^2 < \Lambda \|v_\Lambda\|_{H^{1/2}(\Gamma)}.
\]
Thus, there exists a sequence $\{v_n\}$ such that $\|v_n\|_{H^{1/2}(\Gamma)} = 1$ and
\[
\langle \mathcal{A} v_n , v_n\rangle_{H^{-1/2}(\Gamma), H^{1/2}(\Gamma)} + \alpha \|v_n\|_{L^2(\Gamma)}^2 < \frac{1}{n} \to 0, \quad n \to \infty.
\]
Since $\mathcal{A}$ is non-negative and $\alpha>0$, both terms go to zero:
\begin{align}
\label{eq:coerc_L-1}
\langle \mathcal{A} v_n , v_n\rangle_{H^{-1/2}(\Gamma), H^{1/2}(\Gamma)} \to 0, \quad  \|v_n\|_{L^2(\Gamma)}^2 \to 0, \quad n \to \infty.
\end{align}
Since $\{v_n\}$ bounded in $H^{1/2}(\Gamma)$, by the compact embedding $H^{1/2}(\Gamma) \Subset L^2(\Gamma)$, $v_n$ converges along some subsequence weakly in $H^{1/2}(\Gamma)$ and strongly in $L^2(\Gamma)$ to some $v$. 
By \eqref{Gaarding} and \eqref{eq:coerc_L-1}, there exists a sequence $k_n$ such that $\|v_n- k_n\|_{H^{1/2}(\Gamma)} \to 0$, $n\to \infty$. Since $\|v_n\|_{L^2(\Gamma)} \to 0$, we have $\|k_n\|_{L^2(\Gamma)} \to 0$. The finite number of connected components in $\Gamma$ gives a finite-dimensional ${\rm ker} \mathcal{A}$, and thus we can use the equivalence of norms to get
\begin{align*}
\|k_n\|_{H^{1/2}(\Gamma)} \le C_N \|k_n\|_{L^2(\Gamma)} \to 0, \quad n\to \infty.
\end{align*}
Finally, 
\begin{align*}
\|v_n\|_{H^{1/2}(\Gamma)} \le
\|v_n - k_n\|_{H^{1/2}\Gamma)}
+ \|k_n\|_{H^{1/2}\Gamma)} \to 0, \quad n\to \infty,
\end{align*}
that contradicts the assumption $\|v_n\|_{H^{1/2}(\Gamma)}=1$.\\[5mm]
\medskip
\noindent
(ii) 
By the definition of $\mathcal{A}$ \eqref{def:L} and the positivity of $\sigma$ \ref{assumption_sigma},
\begin{align*}
|\langle \mathcal{A} v , \varphi\rangle_{H^{-1/2}(\Gamma), H^{1/2}(\Gamma)}|
\leq 
C \| \nabla z^{(v)} \|_{L^2(G_{\rm i} \cup G_{\rm e})} \| z^{(\varphi)}\|_{L^2(G_{\rm i} \cup G_{\rm e})}.
\end{align*}
By \eqref{eq:microscopic-prob_stationary}, $z^{(q)}$ for $q= v, \varphi$, minimizes the energy among all admissible functions with a given jump $q$:
\begin{align*}
z^{(q)} = \arg \min_{\zeta \in V^{(q)}} 
\int_{G_{\rm i} \cup G_{\rm e}} \sigma \nabla \zeta \cdot \nabla \zeta \, \dd x,
\end{align*}
where $V^{(q)}=\{\zeta=(\zeta_i, \zeta_e) \in H^1(G_{\rm i} \cap H^1(G_{\rm e})): \,\, [\zeta]_\Gamma = q\}$. Thus, for any $\zeta^{(q)} \in V^{(q)}$, we have
\begin{align*}
\underline{\sigma} \| \nabla z^{(q)} \|_{L^2(G_{\rm i} \cup G_{\rm e})}^2 \le 
\overline{\sigma} \|\nabla \zeta^{(q)} \|_{L^2(G_{\rm i} \cup G_{\rm e})}^2,
\end{align*}
so it is enough to construct one lifting satisfying (ii).
By the trace theorem, for any $q\in H^{1/2}(\Gamma)$, there exists $\tilde{\zeta}^{(q)} \in H^1(G_{\rm i})$ such that 
\[
\|\tilde{\zeta}^{(q)}\|_{H^1(G_{\rm i})} \le C \|q\|_{H^{1/2}(\Gamma)}, \quad 
\]
Taking $\zeta^{(q)}= \tilde{\zeta}^{(q)}$ in $G_{\rm i}$ and zero in $G_{\rm e}$, gives $\zeta^{(q)} \in V^{(q)}$, and estimate (ii) is proved.
\noindent
(iii) is a direct consequence of (ii). Lemma \ref{lm:L} is proved.
\end{proof}

Using the operator $\mathcal{A}$, equations \eqref{eq:microscopic-prob} can be formulated on $\Gamma$:
\begin{align}
\label{eq:micro_deterministic_on_Gamma}
\begin{alignedat}{3}
    c_m \partial_t v + \mathcal{A} v + S_m(v,w)&=-\sigma_{\rm i}\nabla p\cdot \n &\quad &\text{on}\,\, (0,T]\times\Gamma,\\
    \partial_t w &= f(v, w)&\quad & \text{on}\,\, (0,T]\times \Gamma,\\
    v(0,x) &=v_0(x),\quad w(0,x)=w_0(x) &\quad & \text{on}\,\,\Gamma.
\end{alignedat}
\end{align}
The well-posedness of the deterministic problem \eqref{eq:micro_deterministic_on_Gamma} has been proved in \cite{geback2025derivation}. Namely, the following theorem holds:
\begin{theorem}
    Let the assumptions \ref{assumption_sigma}--\ref{assumption:initial_conditions} hold.  Then there exists a unique weak solution $([u], w)$ of problem \eqref{eq:micro_deterministic_on_Gamma} such that 
    \begin{align*}
    &[u] \in C([0,T]; L^2(\Gamma))\cap L^2(0,T; H^{1/2}(\Gamma)), \quad \partial_t [u] \in L^2(0,T; H^{-1/2}(\Gamma)),\\
    &u \in L^2(0,T; H^1(G_{\rm i}\cup G_{\rm e}),\\
    &w \in C([0,T];H^{1/2}(\Gamma)), \quad \partial_t w \in C([0,T]; L^2(\Gamma)), \quad 0\le w \le 1.
    \end{align*}
\end{theorem}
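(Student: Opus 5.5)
Here is how I would prove well-posedness of the deterministic problem \eqref{eq:micro_deterministic_on_Gamma}. I treat it as a nonlinear parabolic equation for $v=[u]$ in the Gelfand triple $H^{1/2}(\Gamma)\subset L^2(\Gamma)\subset H^{-1/2}(\Gamma)$, coupled to a pointwise ODE for $w$. I would use a fixed-point (or Galerkin) argument together with the properties of $\mathcal{A}$ from Lemma \ref{lm:L}.

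\textbf{Step 1: the ODE for $w$.} For a given $v\in L^2(0,T;L^2(\Gamma))$, the equation $\partial_t w=f(v,w)$ is solved pointwise in $x$. Since $f$ is Lipschitz in both arguments by \ref{assumption:f_beta}, it has a unique solution. The invariance $0\le w\le1$ follows from $0\le\beta\le1$:
\begin{itemize}
\item at $w=0$ we have $f\ge0$;
\item at $w=1$ we have $f\le 0$.
\end{itemize}
The map $v\mapsto w$ is Lipschitz: Gronwall gives $\|w_1(t)-w_2(t)\|_{L^2}\le C\int_0^t\|v_1-v_2\|_{L^2}\,ds$. The stated regularity $w\in C([0,T];H^{1/2}(\Gamma))$ comes from the same Gronwall argument applied to difference quotients in $x$ in the Gagliardo seminorm. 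This works because $w(t,x)-w(t,y)$ solves a linear ODE driven by $\beta(v(x))-\beta(v(y))$, so $|w(t,x)-w(t,y)|\le C\int_0^t|v(s,x)-v(s,y)|ds$ plus the contribution of $w_0$. This step requires $v\in L^2(0,T;H^{1/2})$ and $w_0\in H^{1/2}$. If only $w_0\in L^2$ is assumed, I would state the regularity of $w$ accordingly.

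\textbf{Step 2: the linearized $v$-equation.} Freeze $w$ with $0\le w\le 1$. Then $c_m\partial_t v+\mathcal{A}v+(S_0+S_1w)v=-\sigma_{\rm i}\nabla p\cdot\n$ is a linear parabolic problem. Its coefficient $S_0+S_1w$ is bounded, and $\mathcal{A}$ satisfies the Gårding inequality (i) and boundedness (ii)--(iii) of Lemma \ref{lm:L}. The right-hand side lies in $L^2(0,T;H^{-1/2}(\Gamma))$, using \ref{assumption:g} and the continuity of $g\mapsto\sigma_{\rm i}\nabla p\cdot \n$ from $H^{1/2}(\partial G)$ to $H^{-1/2}(\Gamma)$, which is defined weakly via Green's formula. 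J.-L. Lions' theorem then gives a unique $v\in L^2(0,T;H^{1/2})$ with $\partial_t v\in L^2(0,T;H^{-1/2})$, hence $v\in C([0,T];L^2)$. The energy estimate is uniform in $w$.

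\textbf{Step 3: fixed point and uniqueness.} Define $\Phi: w\mapsto v\mapsto \tilde w$ on $C([0,T];L^2(\Gamma;[0,1]))$. The main obstacle is the contraction estimate, because $S_m$ is not Lipschitz in $L^2$. The difference $v_1-v_2$ contains the term $S_1(w_1-w_2)v_2$, a product of two $L^2$ functions. I would test the difference equation with $v_1-v_2$ and use $v_2\in L^2(0,T;H^{1/2}(\Gamma))\subset L^2(0,T;L^4(\Gamma))$, valid by Sobolev embedding on the two-dimensional surface $\Gamma$. This gives
\[
|(S_1(w_1-w_2)v_2,v_1-v_2)|\le C\|w_1-w_2\|_{L^2}\|v_2\|_{H^{1/2}}\|v_1-v_2\|_{H^{1/2}}.
\]
Absorbing the last factor via (i) and applying Gronwall with the integrable weight $\|v_2\|_{H^{1/2}}^2$ yields
\[
\sup_t\|v_1-v_2\|_{L^2}^2\le C\int_0^T\|v_2\|_{H^{1/2}}^2\,\sup_{s\le t}\|w_1-w_2\|^2_{L^2}\,dt .
\]
Combined with Step 1, this gives a contraction on a short interval, or on the whole of $[0,T]$ in an exponentially weighted norm. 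The a priori bound on $v$ from Step 2 does not depend on $w$, so the argument iterates to all of $[0,T]$. The same inequality applied to two solutions gives uniqueness. Finally, I would recover $u$ from $v$ through $z^{(v)}$ and $p$, which gives $u\in L^2(0,T;H^1(G_{\rm i}\cup G_{\rm e}))$. Since $\partial_t w=f(v,w)$ with $v\in C([0,T];L^2)$, we also get $\partial_t w\in C([0,T];L^2)$.
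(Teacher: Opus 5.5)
Your proposal is correct, and it takes a genuinely different route from the paper. The paper does not prove this statement itself; it quotes it from \cite{geback2025derivation}.

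\textbf{Comparison with the paper's closest argument.} The in-house counterpart is the proof of Theorem~\ref{th:existence-(0,T)_kavian}, which with $b=0$ also covers the deterministic problem, apart from the $H^{1/2}$-regularity of $w$. There the pair $(v,w)$ is treated monolithically. First, $w$ is replaced by $T_{[0,1]}(w)$ in the $v$-equation. Then hemicontinuity, local monotonicity, coercivity and growth are checked for the coupled operator on $V=H^{1/2}(\Gamma)\times L^2(\Gamma)$, and Theorem~5.1.3 of \cite{liu2015stochastic} is invoked. Finally, the truncation is removed via Lemma~\ref{lm:bound-w_m}.

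You instead decouple into three pieces: a scalar Carath\'eodory ODE for $w$, J.-L.~Lions' linear theorem for $v$ with the frozen bounded coefficient $S_0+S_1w$, and a Banach fixed point. The decisive estimate is the same in both approaches. The cross term $S_1(w_1-w_2)v_2$ is controlled by $\|w_1-w_2\|_{L^2(\Gamma)}\|v_2\|_{H^{1/2}(\Gamma)}\|v_1-v_2\|_{H^{1/2}(\Gamma)}$ via $H^{1/2}(\Gamma)\hookrightarrow L^4(\Gamma)$.

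In your setting this estimate is used more economically. The $w$-step contributes an extra time integral, so a contraction follows from the uniform $L^1$-in-time bound on $\|v_2\|_{H^{1/2}(\Gamma)}^2$ alone. On short intervals this comes from the factor $T_0^2$. Your weighted-norm alternative with $\sup_t e^{-\lambda t}\|\cdot\|_{L^2(\Gamma)}$ also works, through the bound $(t-r)e^{-2\lambda(t-r)}\le (2\mathrm{e}\lambda)^{-1}$. Uniqueness follows from Gronwall applied to $\|\delta v\|_{L^2(\Gamma)}^2+\|\delta w\|_{L^2(\Gamma)}^2$ with the integrable weight $1+\|v_2\|_{H^{1/2}(\Gamma)}^2$. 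No localization like the stopping time in the paper's Feller argument is needed.

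What each approach buys: the monolithic approach gives existence, uniqueness and the energy bound in one stroke from an abstract theorem, and it carries over verbatim to multiplicative noise. Yours is more elementary. It also produces the $H^{1/2}$-regularity of $w$ naturally, by the same Gagliardo-seminorm comparison the paper performs in Lemma~\ref{lm:uniform_time_estimate}.

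\textbf{Your caveat on $w_0$.} This is not a weakness of your proof but a real defect of the statement. Since $w(0)=w_0$, the conclusion $w\in C([0,T];H^{1/2}(\Gamma))$ cannot hold under \ref{assumption:initial_conditions} alone. In fact one has $\delta w(t)=e^{-\int_0^t c}\,\delta w_0+\int_0^t c(s)e^{-\int_s^t c}\,\delta\beta(s)\,\dd s$, where $c\in[\tau_{\rm res}^{-1},\tau_{\rm ep}^{-1}]$ is the difference quotient of $r\mapsto\max(r/\tau_{\rm ep},r/\tau_{\rm res})$. This shows $w(t)\notin H^{1/2}(\Gamma)$ for every $t$ whenever $w_0\notin H^{1/2}(\Gamma)$. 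The paper itself remarks that the regularity of $w$ is inherited from $w_0$.

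\textbf{One small addition.} With $w_0\in H^{1/2}(\Gamma)$, your pointwise bound gives only $w\in L^\infty(0,T;H^{1/2}(\Gamma))$. For continuity in time, integrate $\partial_t\delta w=c\,(\delta\beta-\delta w)$ over $[t',t]$. This gives $|w(t)-w(t')|_{H^{1/2}(\Gamma)}^2\le C|t-t'|\int_{t'}^t\big(|v(s)|_{H^{1/2}(\Gamma)}^2+|w(s)|_{H^{1/2}(\Gamma)}^2\big)\,\dd s$, which tends to zero as $t'\to t$.
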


\subsection{Stochastically perturbed electropermeabilization model on the cell scale}

We can now formulate a stochastic version of the electropermeabilization model for a single cell. Let $H= L^2(\Gamma) \times  L^2(\Gamma)$, $V=H^{1/2}(\Gamma) \times  L^2(\Gamma)$ and $Z=H^{1/2}(\Gamma) \times  H^{1/2}(\Gamma)$. Then $Z\subset V\subset H= H^\ast \subset V^\ast$ with continuous and dense embeddings, and we denote $ \langle \cdot, \cdot \rangle_{V^\ast, V}$ the dual pairing between $V$ and $V^\ast$. {Due to the specific structure of the model, the existence of the solutions and their long time behavior will be studied on the following functional spaces:
\begin{align}
\label{def:H_infty}
H_\infty = L^2(\Gamma) \times L^2(\Gamma; [0,1]), \quad V_\infty = H^{1/2}(\Gamma) \times L^2(\Gamma; [0,1]), \quad Z_\infty = H^{1/2}(\Gamma) \times H^{1/2}(\Gamma; [0,1]),
\end{align}
where 
\begin{align*}
L^2(\Gamma; [0,1]) &= \{w\in L^2(\Gamma): \,\, 0\le w \le 1 \,\, \mbox{a.e. on } \, \Gamma\},\\
H^{1/2}(\Gamma; [0,1]) &= \{w\in H^{1/2}(\Gamma): \,\, 0\le w \le 1 \,\, \mbox{a.e. on } \, \Gamma\}.
\end{align*}
Since $L^2(\Gamma; [0,1])$ ($H^{1/2}(\Gamma; [0,1])$) is a strongly closed (and also weakly closed) subset of $L^2(\Gamma)$ ($H^{1/2}(\Gamma)$), the state spaces $H_\infty, V_\infty$, and $Z_\infty$ are Polish spaces with the inherited $H, V$, and $Z$ metrics, respectively.   
}

Let $(\Omega, \F, \F_t, \PP)$ be a complete filtered probability space, $E$ is a given Hilbert space, and $(W_t)_{t\ge 0}$ be a standard $E$-valued $Q$-Wiener process with respect to $(\F_t)_{t\ge 0}$, where $Q\in L(E)$ is nonnegative, symmetric, with finite trace. 
Any such  process has for each $t\in [0,T]$ the following Karhunen–Loève representation (Proposition 2.1.10, \cite{liu2010spde}):
    \begin{align}
    \label{Karhunen-Loeve_kavian}
    W_t= \sum_{k\in \mathbb N} \sqrt{\gamma_k} \beta_k(t) e_k(x),
    \end{align}
    where $\beta_k$ are independent real-valued Brownian motions; $(\gamma_k, e_k)$ are eigenpairs of $Q$ forming an orthonormal basis in $E$ and an orthogonal basis in $E_0:=Q^{1/2}(E)$. The series \eqref{Karhunen-Loeve_kavian} converges in $L^2(\Omega; E)$ uniformly in $t\in[0,T]$, because $Q$ is of a trace class, and thus $\sum_k \gamma_k <\infty$. 

    We incorporate a stochastic perturbation into the deterministic model \eqref{eq:micro_deterministic_on_Gamma} to account for membrane fluctuations and microscopic thermal effects, and consider the following stochastic evolution equation on $\Gamma$:
\begin{equation}
\label{eq:stochastic_kavian}
\begin{aligned}
c_m \dd v &= -(\mathcal{A} v + S_m(v,w) + {\sigma_{\rm i} \nabla p \cdot \n})\, \dd t + b(v)\, \dd W_t&\quad& \text{on}\,\, (0,T]\times \Gamma,\\
\dd w &= f(v,w)\, \dd t&\quad& \text{on}\,\, (0,T]\times \Gamma,\\
v(0,x)&=v_0(x), \quad w(0,x)=w_0(x)&\quad& \text{on}\,\, \Gamma 
\end{aligned}
\end{equation}
where $\mathcal{A}$ is defined by \eqref{def:L}-\eqref{eq:microscopic-prob_stationary} and $p$ solves \eqref{eq:p}. 
Introducing the vector $U=(v,w)$, \eqref{eq:stochastic_kavian} transforms into the following SPDE in the matrix form on $[0,T]\times \Gamma$:
\begin{align}
\label{eq:orig-prob-vector_kavian}
\dd U = (\mathbb{A}(U)+P)\, \dd t + B(U)\, \dd W_t, \quad U(t=0)=U_0.
\end{align}
The operator $\mathbb{A}(\cdot)$ is nonlinear, acting from $V=H^{1/2}(\Gamma) \times  L^2(\Gamma)$ to its dual $V^\ast = H^{-1/2}(\Gamma) \times  L^2(\Gamma)$, and is given by
\begin{align}
\label{def:A_kavian}
\begin{alignedat}{1}
&\mathbb{A}(U) = A \, U + F(U), \quad 
A\,U = 
\begin{pmatrix}
\displaystyle
    -\frac{1}{c_m}(\mathcal{A} + S_0 \mathrm{Id})\, v &0\\ 0&0
\end{pmatrix}, 
\quad 
U_0 = \begin{pmatrix}
    v_0\\w_0
\end{pmatrix},
\\[2mm]
\quad
&F(U)= \begin{pmatrix}
\displaystyle
    -\frac{S_1}{c_m} \, v\,w  \\ f(v,w)
\end{pmatrix}, \quad
P = \begin{pmatrix}
\displaystyle
    - \frac{1}{c_m}\sigma_{\rm i} \nabla p \cdot \n \\ 0
\end{pmatrix}, \quad
B(U) \dd W_t = \begin{pmatrix}
\displaystyle
    \frac{1}{c_m}b(v)\dd W_t \\ 0
\end{pmatrix}.
\end{alignedat}
\end{align}

In what follows we assume that the following hypotheses on the noise term hold.
\begin{enumerate}[label=(B\arabic*)]
\item \label{B1}
$b(\cdot)$ defined on $L^2(\Gamma)$ takes values in the space of Hilbert-Schmidt operators $L_2^0 : = L_{\mathrm HS}(E_0, L^2(\Gamma))$, with the norm denoted $\|\cdot\|_{L_2^0}$, and satisfies the Lipschitz continuity condition:
\begin{align}
\label{eq:Lip-noise_kavian}
    \|b(\xi_1)-b(\xi_2)\|_{L_2^0}^2\le L_{\rm b}\, \|\xi_1-\xi_2\|_{L^2(\Gamma)}^2.
\end{align}
As for the initial condition $U_0$, depending on the situation, we assume that either
\item \label{B2}
$U_0=(v_0, w_0) \in L^2(\Omega, \F_0, \PP; H_\infty)$, or
\item \label{B3}
$U_0=(v_0, w_0) \in L^2(\Omega, \F_0, \PP; V_\infty)$.
\end{enumerate}
Following \cite{pardoux2021stochastic} and \cite{liu2015stochastic}, we adopt the following definition of a variational solution.
\begin{definition}
\label{def:variational}
Given a $\mathcal F_0$-measurable $H$-valued random variable $U_0\in L^2(\Omega, \mathcal F_0, \mathcal P; H)$, we say that a continuous, $\mathcal F_t$-adapted, $H$-valued stochastic process $U(t)$ is a variational solution of \eqref{eq:orig-prob-vector_kavian} if $U\in L^2((0,T)\times \Omega;V)$ and $\PP$-a.s., for every $t\in [0,T]$,
\begin{align}
\label{eq:var_sol_kavian}
U(t)=U_0 + \int_0^t (\mathbb{A}(U(s)) + P(s))\, \dd s + \int_0^t B(U(s))\, \dd W_s.
\end{align}
\end{definition}
Since $U\in L^2(0,T; V)$ $\PP$-a.s., we can understand \eqref{eq:var_sol_kavian} as an equation in $V^\ast$ or in the weak sense
\begin{align*}
(U(t), \varphi)_H=(U_0, \varphi)_H + \int_0^t \langle \mathbb{A}(U(s)), \varphi\rangle_{V^\ast, V}\, \dd s +
\int_0^t \langle P, \varphi\rangle_{V^\ast, V}\, \dd s
+\int_0^t (\varphi, B(U(s))\, \dd W_s)_H.
\end{align*}
By a slight abuse of notation, we use $U$ both for a $\dd t \otimes \PP$ -equivalence class and for its progressively measurable representative $U$, which exists due to Remark 4.2.2 in \cite{liu2015stochastic}. The section below summarizes the main results of the paper.

\subsection{Main results}
\label{sec:main_results}
First, we establish the well-posedness of \eqref{eq:orig-prob-vector_kavian}. 
\begin{theorem}
\label{th:existence-(0,T)_kavian}
    Suppose that \ref{assumption_sigma}-\ref{assumption:g}, \ref{B1}, \ref{B2} be satisfied. Then, for any $T>0$, there exists a unique variational solution $(U(t))_{t\in[0,T]}$ of \eqref{eq:orig-prob-vector_kavian} such that
    \begin{itemize}
    \item[{[i]}]
    The  trajectories of $U$ belong to $L^2(0,T;V_\infty)\cap C([0,T];H_\infty)$ $\PP$-a.s.;
    \item[{[ii]}]
    $ \displaystyle
        \E \Big[\sup_{t\in [0,T]} \|U(t)\|_H^2\Big] + \E \int_0^T \|U(t)\|_V^2\, \dd t 
        \le C(T)\Big(\E \|U_{0}\|_{H}^2 + \sup_{t\in [0,T]}\|g\|_{H^{1/2}(\partial G)}^2 + 1 \Big)$.
    \end{itemize}
\end{theorem}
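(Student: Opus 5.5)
Our plan is to reduce the problem to the generalized variational framework of Liu and Röckner (Theorem 5.1.3 in \cite{liu2015stochastic}), after first modifying the nonlinearity so that the required hypotheses hold globally. The obstruction is the term $-\frac{S_1}{c_m} v w$. It is neither Lipschitz nor locally monotone on $V$, since $w$ is only in $L^2(\Gamma)$ and a priori unbounded. We therefore introduce an \emph{auxiliary equation} in which $w$ is replaced by its projection $\pi(w)=\min(\max(w,0),1)$ in the reaction term, that is $\tilde F(U)=(-\frac{S_1}{c_m} v\,\pi(w),\, f(v,w))^T$.

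\textbf{Step 1: hypotheses for the auxiliary equation.} Hemicontinuity follows from the continuity of $\pi$ and $f$ together with dominated convergence. For coercivity we use Lemma \ref{lm:L}(i): $\langle A U,U\rangle \le -\frac{\Lambda}{c_m}\|v\|_{H^{1/2}}^2 + C\|v\|^2_{L^2}$. We also have $-S_1 v^2\pi(w)\le 0$, and $f(v,w)w\le C(1+w^2)$ because $0\le\beta\le1$ and $f$ is Lipschitz. The term $P$ is bounded in $H^{-1/2}(\Gamma)$ by $C\|g\|_{H^{1/2}(\partial G)}$ through the trace of $\sigma_{\rm i}\nabla p\cdot \n$, and \ref{B1} controls $\|B(U)\|^2_{L_2^0}$. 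For local monotonicity, write $v_1\pi(w_1)-v_2\pi(w_2)=(v_1-v_2)\pi(w_1)+v_2(\pi(w_1)-\pi(w_2))$. The first piece gives a nonpositive contribution. The second is bounded by $S_1\|v_2\|_{L^4}\|v_1-v_2\|_{L^4}\|w_1-w_2\|_{L^2}$. Using the embedding $H^{1/2}(\Gamma)\subset L^4(\Gamma)$ on the two-dimensional surface $\Gamma$ and Young's inequality, this is at most $\varepsilon\|v_1-v_2\|^2_{H^{1/2}}+C_\varepsilon\|v_2\|_{H^{1/2}}^2\|U_1-U_2\|_H^2$. The local monotonicity factor is thus $\rho(U)=C\|U\|_V^2$, which satisfies the growth condition $\rho(U)\le C(1+\|U\|_V^2)(1+\|U\|_H^\beta)$. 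The growth condition $\|\mathbb A(U)\|_{V^*}^2\le C(1+\|U\|_V^2)(1+\|U\|_H^2)$ also holds, since $\|v\pi(w)\|_{L^2}\le\|v\|_{L^2}$. This gives a unique variational solution $\tilde U$ of the auxiliary problem, together with the estimate [ii] via Itô's formula for $\|U\|_H^2$, Gronwall, and the BDG inequality.

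\textbf{Step 2: invariance of $[0,1]$.} We show that $0\le \tilde w\le 1$ a.e. Pathwise, $\tilde w$ solves the ODE $\partial_t w=f(\tilde v,w)$ pointwise in $x$, and $f$ is Lipschitz in $w$ with $f(v,0)\ge0$ and $f(v,1)\le0$ because $0\le\beta\le1$. Comparison for this ODE, or equivalently testing with $w^-$ and $(w-1)^+$, shows that the interval is invariant when $0\le w_0\le1$. Consequently $\pi(\tilde w)=\tilde w$, so $\tilde U$ solves \eqref{eq:orig-prob-vector_kavian}, and its trajectories lie in $L^2(0,T;V_\infty)\cap C([0,T];H_\infty)$.

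\textbf{Step 3: uniqueness for the original problem.} Uniqueness must hold among all variational solutions, and a priori another solution $U$ need not have $w\in[0,1]$. To handle this, we first observe that the $w$-equation is a pathwise Lipschitz ODE driven by $v$, so any solution automatically satisfies $w\in[0,1]$ by the same comparison argument. Hence any solution also solves the auxiliary equation, and uniqueness for the auxiliary equation applies. Alternatively, we can apply Itô's formula to $e^{-\int_0^t\rho(\tilde U)ds}\|U-\tilde U\|_H^2$ directly.

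We expect Step 1, namely verifying local monotonicity with the correct $\rho$ and the dimension-dependent embedding $H^{1/2}(\Gamma)\subset L^4(\Gamma)$, to be the main obstacle. The truncation via the auxiliary equation is precisely what makes it work.
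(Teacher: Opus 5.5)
Your proposal is correct and follows essentially the same route as the paper. Like the paper, you truncate $w$ by $T_{[0,1]}$ in the reaction term, check the hypotheses of Theorem~5.1.3 in Liu--R\"ockner (with local monotonicity factor $\rho\sim\|v_2\|_{H^{1/2}(\Gamma)}^2$ obtained from $H^{1/2}(\Gamma)\hookrightarrow L^4(\Gamma)$), and then use the invariance of $[0,1]$ for the $w$-equation (Lemma~\ref{lm:bound-w_m}) to identify the solutions of the truncated and the original problems.

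One bookkeeping point: state the growth bound as $\|\mathbb{A}_1(U)+P\|_{V^\ast}^2\le C\bigl(1+\|g\|_{H^{1/2}(\partial G)}^2+\|U\|_V^2\bigr)$, i.e.\ with $\beta=0$. This is exactly what your observations $\|v\,\pi(w)\|_{L^2(\Gamma)}\le\|v\|_{L^2(\Gamma)}$ and $|f(v,w)|\le C(1+|w|)$ give. With the extra factor $(1+\|U\|_H^2)$, that theorem would require $U_0\in L^4(\Omega;H)$ rather than only \ref{B2}.
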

Next, in Section \ref{sec:Markov_and_Feller}, we prove the Markov and Feller property of the variational solution of \eqref{eq:orig-prob-vector_kavian} (see Lemma \ref{lm:Feller}). It is important to note that for the non-truncated nonlinearity $S_m(v,w)$, we establish the Feller property directly, and the Lipschitz dependence on the initial data does not hold, in contrast to \cite{da2014stochastic}. The existence of an invariant measure in autonomous case, when the boundary excitation $g$, and thus $p$ solving \eqref{eq:p}, does not depend on $t$, is proved in Section \ref{sec:existence_inv_measure} under stronger regularity assumption on the initial data $w_0$ \ref{B3}, which is necessary for obtaining a uniform in time estimate for the time average of $\|w\|_{H^{1/2}(\Gamma)}^2$ (see Lemma \ref{lm:uniform_time_estimate}).  
\begin{theorem}[Existence of invariant measure]
\label{th:existence_inv_measure}
Suppose that \ref{assumption_sigma}--\ref{assumption:g} , \ref{B1}, \ref{B3} hold, and assume that the boundary condition $g$, and thus $p$ solving \eqref{eq:p}, is time-independent. 
Then there exists an invariant measure $\mu \in \mathcal{M}_1(H_\infty)$ for \eqref{eq:orig-prob-vector_kavian}.

Furthermore, if there exists a $\F_0$-measurable random variable $\xi: \Omega \to H_\infty$ with the law $\mathcal{L}(\xi)=\mu$, then the support of the invariant measure $\mu$ lies in $Z_\infty$, that is
\begin{align}
\label{eq:support}
\int_{H_\infty} \|y\|_{Z}^2 \, \mu(\dd y) < \infty.
\end{align}
In addition, if $(U^\ast(t))_{t\ge 0}$ is a solution of \eqref{eq:orig-prob-vector_kavian} such that $U^\ast(0)=\xi$ $\PP$-a.s., then $(U^\ast(t))_{t\ge 0}$ is a stationary solution taking values in $Z_\infty$ with probability one for each $t\ge 0$.
\end{theorem}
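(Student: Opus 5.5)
The existence part follows the Krylov--Bogoliubov scheme. By Lemma \ref{lm:Feller}, the transition semigroup $(P_t)_{t\ge0}$ of \eqref{eq:orig-prob-vector_kavian} on $H_\infty$ is Markov and Feller. Fix a deterministic initial datum $U_0=(v_0,w_0)\in V_\infty$; for instance $U_0=(0,0)$ satisfies \ref{B3}. For $T\ge1$ define the time-averaged measures
\[
\mu_T(\Gamma_0)=\frac1T\int_0^T P_t(U_0,\Gamma_0)\,\dd t,\qquad \Gamma_0\in\mathcal B(H_\infty).
\]
We will invoke the uniform time-average estimate of Lemma \ref{lm:uniform_time_estimate}, which I expect to read
\[
\sup_{T\ge1}\frac1T\int_0^T\E\big[\|v(t)\|_{H^{1/2}(\Gamma)}^2+\|w(t)\|_{H^{1/2}(\Gamma)}^2\big]\,\dd t\le C.
\]
Consider the balls $K_R=\{y\in Z_\infty:\|y\|_Z\le R\}$. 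They are closed in $H_\infty$, since $Z_\infty$ is weakly closed in $Z$, and compact in $H_\infty$, since $H^{1/2}(\Gamma)\Subset L^2(\Gamma)$ and the constraint $0\le w\le1$ is preserved under $L^2$ limits. Chebyshev's inequality then gives $\mu_T(H_\infty\setminus K_R)\le C/R^2$ uniformly in $T$. Hence $\{\mu_T\}$ is tight. By Prokhorov's theorem some sequence $\mu_{T_n}$ converges weakly to a measure $\mu\in\mathcal M_1(H_\infty)$. The Feller property gives $P_t^\ast\mu=\mu$ through the standard Krylov--Bogoliubov argument: for $\varphi\in C_b(H_\infty)$, the function $P_t\varphi$ lies in $C_b$, and $\mu_{T_n}(P_t\varphi)-\mu_{T_n}(\varphi)=O(t/T_n)$.

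For the support property \eqref{eq:support}, take $\xi$ with $\mathcal L(\xi)=\mu$ and consider the truncated function $\varphi_N(y)=\min(\|y\|_Z^2,N)$, extended by $N$ outside $Z_\infty$. This function is lower semicontinuous on $H_\infty$ but not continuous, so it should be approximated from below by continuous functions; alternatively one can use the portmanteau inequality for lower semicontinuous functions. Invariance gives
\[
\int\varphi_N\,\dd\mu=\frac1T\int_0^T\E\,\varphi_N(U^\xi(t))\,\dd t.
\]
The right-hand side must be bounded independently of the initial law. Here lies the main obstacle. Lemma \ref{lm:uniform_time_estimate} needs $U_0\in V_\infty$, that is $w_0\in H^{1/2}$, while $\xi$ is a priori only $H_\infty$-valued. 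I would handle this in one of two ways.

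The first option is to use the explicit structure of the ODE. The solution satisfies $w(t)=w_0e^{-t/\tau}+\ldots$, so the $H^{1/2}$ seminorm of $w$ contains a contribution from $w_0$ that decays like $e^{-ct}$. This holds because $\partial_w f\le-1/\tau_{\rm res}<0$: differentiating the ODE along increments $w(x)-w(y)$ gives a Gagliardo-seminorm estimate of the form
\[
\frac{\dd}{\dd t}[w]^2\le-\frac{2}{\tau_{\rm res}}[w]^2+C[v]^2 .
\]
Time-averaging then yields a bound of the form $C(1+\E\|\xi\|_H^2)+\frac{1}{T}\,\E[w_0]^2_{H^{1/2}}$ with $[w_0]$ possibly infinite. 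The second option avoids this. Apply the bound to the measures $\mu_T$ started from regular data, using the lower semicontinuity of $\varphi_N$ under weak convergence:
\[
\int\varphi_N\,\dd\mu\le\liminf_n\int\varphi_N\,\dd\mu_{T_n}\le C .
\]
Then let $N\to\infty$ by monotone convergence. This second route is cleaner, and I would try it first, since it needs only the $T$-uniform bound from Lemma \ref{lm:uniform_time_estimate} for fixed regular data.

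Finally, for the stationarity statement, let $U^\ast$ be the solution with $U^\ast(0)=\xi$. By the Markov property, $\mathcal L(U^\ast(t))=P_t^\ast\mu=\mu$ for every $t$. Therefore $\E\|U^\ast(t)\|_Z^2=\int\|y\|_Z^2\,\mu(\dd y)<\infty$, so $U^\ast(t)\in Z_\infty$ $\PP$-a.s. for each $t\ge0$. Stationarity of the finite-dimensional distributions follows from the Markov property together with the invariance of $\mu$.
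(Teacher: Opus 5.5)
Your existence argument is the paper's. It is Krylov--Bogoliubov from a Dirac initial law with $w_0\in H^{1/2}(\Gamma)$, with compact $Z$-balls, Chebyshev, and Lemma \ref{lm:uniform_time_estimate}. Note that it is your concrete choice $U_0=(0,0)$, not mere membership in $V_\infty$, that makes \eqref{eq:better-estimate-w_H^1/2} applicable.

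For the support you take a genuinely different and shorter route: lower semicontinuity of $\varphi_N$, the portmanteau inequality along $\mu_{T_n}\to\mu$, and monotone convergence. This is correct, and for the support claim it does not even use the random variable $\xi$. However, it only proves \eqref{eq:support} for the particular measure obtained as a limit of time averages from regular data.

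The paper instead treats an arbitrary invariant $\mu$, in two steps. First, it gets $\int\|y\|_V^2\,\mu(\dd y)<\infty$ from invariance, a splitting over $H$-balls, and \eqref{eq:better-estimate-v_H^1/2}, which needs only $v_0\in L^2(\Gamma)$. Second, it uses $\xi$ and the stationary solution $U^\ast$ to compare $w^\ast$ with the solution of $\partial_t w=f(v^\ast,w)$, $w(0)=0$. That auxiliary solution has $\sup_n\E\|w(n)\|_{H^{1/2}(\Gamma)}^2<\infty$ by stationarity of $v^\ast$, while $\|w^\ast(n)-w(n)\|_{L^2(\Gamma)}\to0$ exponentially. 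Lower semicontinuity then transfers the bound to the $w$-marginal of $\mu$.

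This comparison is exactly the device that removes the obstacle you identified in your first option, namely the possibly infinite $[w_0]_{H^{1/2}}$. It is also why the hypothesis on $\xi$ appears in the statement.

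Your version suffices for the literal reading of the theorem, where the support claim concerns the $\mu$ whose existence is asserted, and your stationarity argument then goes through. The paper's version gives regularity of every invariant measure, which matters here because uniqueness is not available for the non-truncated reaction term.

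Both arguments tacitly use \eqref{eq:small_Lip_noise}, since Lemmas \ref{lm:Feller} and \ref{lm:uniform_time_estimate} assume it and the theorem's hypotheses do not include it.
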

\begin{remark}
In general, the existence of a random variable with a given law on a fixed probability space is not guaranteed. However, if $(\Omega, \F_0, \PP)$ is atomless, there exists a uniform random variable $\bar{\xi}: \Omega \to [0,1]$, and thus, by the kernel representation lemma (see Lemma~4.22 in \cite{kallenberg1997foundations}) there exists a random variable $\xi: \Omega \to H_\infty$ with the law $\mu$. Specifically, let $\phi:H_\infty \to [0,1]$ is the Borel isomorphism. Then the random variable $\xi$ can be defined in the following way:
\begin{align*}
    \xi(\omega)= \phi^{-1}(\sup \{\tau \in [0,1]: \,\, \mu(\phi^{-1}([0,\tau])) \le \bar{\xi}(\omega)\}), \quad \omega \in \Omega.
\end{align*}

\end{remark}
\medskip
Typically, the uniqueness of invariant measure is proved using the strict monotonicity (dissipativity) of the operator, which is not satisfied because of the quadratic nonlinearity $S_1 v\, w$. In applications, the electrical potential is bounded, and thus it is reasonable to assume $\sup_t\|v\|_{L^\infty(\Gamma)} \le C$, which, together with an appropriate smallness assumption (see Theorem \ref{th:inv_measure_truncated}), provides the dissipation necessary to prove the uniqueness result. 
Motivated by this observation, we replace $S_m(v,w)$ in \eqref{eq:orig-prob-vector_kavian} by a bounded perturbation of a linear function by setting
\begin{align}
\label{eq:S_M}
S_M(v, w)= S_0 \, v + S_1 w \,T_{[-M,M]}(v), \quad  T_{[-M,M]}(v) = \max\big(-M, \min(M, v)\big),
\end{align}
for some $M>0$. Note that the truncation is $1$-Lipschitz:
\begin{align*}
\|T_{[-M,M]}(v) - T_{[-M,M]}(\varphi)\|_{L^2(\Gamma)}
\le \|v-\varphi\|_{L^2(\Gamma)}.
\end{align*}
The truncated nonlinearity $S_M(v,w)$ is globally Lipschitz with a Lipschitz constant depending on $M$. For any $0\le s\le t\le T$ we study the following stochastic evolution equation on $\Gamma$:
\begin{equation}
\label{eq:stochastic_kavian_truncation}
\begin{aligned}
c_m \dd v &= -(\mathcal{A} v + S_M(v,w) + {\sigma_{\rm i} \nabla p \cdot \n})\, \dd t + b(v)\, \dd W_t&\quad& \text{on}\,\, (s,T]\times \Gamma,\\
\dd w &= f(v,w)\, \dd t, \quad w(s,x)=w_0(x)&\quad& \text{on}\,\, (s,T]\times \Gamma,\\
v(s,x)&=v_0(x), \quad w(s,x)=w_0(x)&\quad& \text{on}\,\, \Gamma, 
\end{aligned}
\end{equation}
where $\mathcal{A}$ as before is defined by \eqref{def:L}, \eqref{eq:microscopic-prob_stationary} and $p$ solves \eqref{eq:p}. 
Denoting $U=(v,w)$, \eqref{eq:stochastic_kavian_truncation} transforms into 
\begin{align}
\label{eq:orig-prob-vector_kavian_truncation}
\dd U = (\mathbb{A}_M(U)+P)\, \dd t + B(U)\, \dd W_t, \quad U(t=s)=U_0.
\end{align}
Here $\mathbb{A}_M(U)=A \, U + F_M(U)$ with $A U, P, B(U)$ defined in \eqref{def:A_kavian} and
\begin{align*}
F_M(U)= \begin{pmatrix}
\displaystyle
    -\frac{S_1}{c_m}\,T_{[-M,M]}(v)\,w \\ f(v,w)
\end{pmatrix}.
\end{align*}
Similar to the original problem \eqref{eq:orig-prob-vector_kavian}, there exists a unique variational solution. For \eqref{eq:orig-prob-vector_kavian_truncation}, we prove the existence and uniqueness of invariant measure under the smallness assumption on the Lipschitz constants $L_{\rm b}, L_\beta$ (see \ref{B1} and \ref{assumption:f_beta}), and $M$ in \eqref{eq:S_M}.
\begin{theorem}
\label{th:inv_measure_truncated}
Suppose the assumptions \ref{assumption_sigma}--\ref{assumption:g}, \ref{B1}, \ref{B2} hold, and the Lipschitz constants $L_b, L_\beta$ in \ref{B1} and \ref{assumption:f_beta}, and the constant $M$ in \eqref{eq:S_M} are sufficiently small. In addition, assume that the boundary condition $g$, and thus $p$ solving \eqref{eq:p}, is independent of $t$. 
Then there exists a unique invariant measure $\mu \in \mathcal{M}_1(H_\infty)$ for \eqref{eq:orig-prob-vector_kavian_truncation}, and it has a finite second moment in $Z$: 
\begin{align*}
\int_{H_\infty} \|y\|_{Z}^2 \, \mu(\dd y) < \infty.
\end{align*}
Moreover, there exists a complete strictly stationary variational solution
$(U^\ast(t))_{t\in\mathbb R}$ of \eqref{eq:orig-prob-vector_kavian_truncation}, and $\mathcal L(U^\ast(t))=\mu$, $t\in\mathbb R$.
More precisely, for every finite interval $[r,T]\subset\mathbb R$, the
restriction $U^\ast|_{[r,T]}$ is a variational solution of \eqref{eq:orig-prob-vector_kavian_truncation} on $[r,T]$ with initial condition $U^\ast(r)$ at time $r$, and 
\[
\sup_{t\in\mathbb R}\mathbb E\|U^\ast(t)\|_Z^2
<\infty.
\]
\end{theorem}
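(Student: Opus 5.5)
The plan is to use the dissipativity method of Da Prato--Zabczyk: run the equation from the remote past $s\to-\infty$, show that the resulting solutions form a Cauchy family in $L^2(\Omega;H)$, and take the limit as the stationary solution. We use the two-sided Wiener process $(W_t)_{t\in\mathbb R}$, obtained by gluing two independent copies, with the filtration generated by its increments. For $s<t$, let $U(t;s,U_0)$ denote the variational solution of \eqref{eq:orig-prob-vector_kavian_truncation} started at time $s$. Its existence and uniqueness follow as in Theorem \ref{th:existence-(0,T)_kavian}; since $F_M$ is globally Lipschitz, the standard monotone theory of \cite{liu2015stochastic} also applies directly.

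\textbf{Exponential contraction in $H$.} Take two solutions $U^j=(v^j,w^j)$, $j=1,2$, driven by the same noise, and set $\delta v=v^1-v^2$, $\delta w=w^1-w^2$. Apply It\^o's formula to the weighted functional $\Phi=c_m\|\delta v\|_{L^2(\Gamma)}^2+\kappa\|\delta w\|_{L^2(\Gamma)}^2$, where $\kappa>0$ is to be chosen. The terms are estimated as follows.
\begin{itemize}
\item The linear part gives $-2\langle\mathcal A\delta v,\delta v\rangle-2S_0\|\delta v\|^2\le -2S_0\|\delta v\|^2$, using Lemma \ref{lm:L}.
\item For the truncated reaction, write $w^1T(v^1)-w^2T(v^2)=w^1(T(v^1)-T(v^2))+\delta w\,T(v^2)$. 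Since $0\le w^1\le 1$ and $T$ is monotone, the first piece contributes a nonpositive term. The second is bounded by $2S_1M\|\delta v\|\|\delta w\|$.
\item For the ODE part, $f$ is nonincreasing in $w$ with slope at most $-1/\tau_{\rm res}$ and Lipschitz in $v$ with constant $L_\beta/\tau_{\rm ep}$. This gives $2\kappa(f(v^1,w^1)-f(v^2,w^2),\delta w)\le -\frac{2\kappa}{\tau_{\rm res}}\|\delta w\|^2+\frac{2\kappa L_\beta}{\tau_{\rm ep}}\|\delta v\|\|\delta w\|$.
\item The noise contributes $\frac{L_b}{c_m}\|\delta v\|^2$.
\end{itemize}
Taking expectations kills the martingale term. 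Choosing $\kappa$ and using Young's inequality, smallness of $M$, $L_\beta$ and $L_b$ relative to $S_0$ and $1/\tau_{\rm res}$ makes the resulting quadratic form negative definite. This yields $\frac{d}{dt}\E\Phi\le-\lambda\E\Phi$, hence
\[
\E\|U^1(t)-U^2(t)\|_H^2\le Ce^{-\lambda(t-s)}\E\|U^1(s)-U^2(s)\|_H^2 .
\]

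\textbf{Uniform bounds.} The same computation with one solution replaced by $0$, keeping the constant drift $P$ (time-independent since $g$ is) and $b(0)$ as forcing, gives $\sup_{t\ge s}\E\|U(t;s,U_0)\|_H^2\le C(1+\E\|U_0\|_H^2)$. In addition, the $\langle\mathcal A v,v\rangle$ term together with Lemma \ref{lm:L}(i) bounds $\frac1{t-r}\int_r^t\E\|v\|_{H^{1/2}}^2\,\dd\tau$. For $w$ in $H^{1/2}(\Gamma)$ we repeat the argument of Lemma \ref{lm:uniform_time_estimate}. Differencing translates of $w$ in the Gagliardo seminorm gives $\partial_t|w(x)-w(y)|^2\le -\frac{2}{\tau_{\rm res}}|w(x)-w(y)|^2+C L_\beta|v(x)-v(y)||w(x)-w(y)|$. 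Here the case split in the max defining $f$ is handled by its monotone structure. This gives a Gronwall-type bound of $\|w(t)\|_{H^{1/2}}^2$ by $e^{-2(t-s)/\tau_{\rm res}}\|w(s)\|_{H^{1/2}}^2+C\int_s^t e^{-c(t-\tau)}\|v(\tau)\|_{H^{1/2}}^2\,\dd\tau$.

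\textbf{Stationary solution, uniqueness and regularity.} Fix a deterministic $U_0\in Z_\infty$. By the flow property, for $s_1<s_2<t$,
\[
U(t;s_1,U_0)=U(t;s_2,U(s_2;s_1,U_0)).
\]
The contraction and the uniform bound then give $\E\|U(t;s_1,U_0)-U(t;s_2,U_0)\|_H^2\le Ce^{-\lambda(t-s_2)}$. Hence $U(t;s,U_0)\to U^\ast(t)$ in $L^2(\Omega;H)$ as $s\to-\infty$, uniformly for $t$ in compacts, and the limit is independent of $U_0$. $H_\infty$ is closed, so $U^\ast(t)\in H_\infty$. Passing to the limit in the variational identity on $[r,T]$ shows that $U^\ast|_{[r,T]}$ is the variational solution started from $U^\ast(r)$. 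The convergence is in $L^2(\Omega;C([r,T];H))$ by the contraction estimate with sup, and the stochastic integral converges by \ref{B1}.

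Since the noise increments are stationary, the law of $U(t;s,U_0)$ depends only on $t-s$; hence $\mu:=\mathcal L(U^\ast(t))$ is independent of $t$ and is invariant. Uniqueness: for any invariant $\nu$, start from $\xi\sim\nu$ at time $s$. Then $\nu=\mathcal L(U(0;s,\xi))$, and by the contraction together with a truncation of $\xi$ (or Wasserstein-$1$ contraction to avoid moment assumptions) this converges to $\mu$ as $s\to-\infty$. Strict stationarity of $U^\ast$ follows the same way, from stationarity of the increments.

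For the $Z$-moment, take pointwise limits in the time-averaged $H^{1/2}$ bounds from the uniform-bounds step and use lower semicontinuity of the norm. This gives $\sup_t\E\|U^\ast(t)\|_Z^2<\infty$, using $U_0\in Z_\infty$ and the exponential damping of the initial $H^{1/2}$ term of $w$. Fatou then gives $\int\|y\|_Z^2\,\mu(\dd y)<\infty$. The $v$-component is only controlled in time average; stationarity converts $\frac1{t-r}\int_r^t\E\|v\|_{H^{1/2}}^2\,\dd\tau$ into $\E\|v^\ast(t)\|_{H^{1/2}}^2$ for a.e.\ $t$, and hence for all $t$ since the law does not depend on $t$.

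The main obstacle is the first step: obtaining a genuinely negative definite quadratic form. One has to handle the nonsmooth max in $f$, and the cross term with $T_{[-M,M]}(v^2)\delta w$, using only $S_0$ and $1/\tau_{\rm res}$ as damping. This is exactly where smallness of $M$, $L_\beta$ and $L_b$ is used.
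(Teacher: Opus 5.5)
Your proposal is correct and follows essentially the same route as the paper. The shared steps are: a two-sided Wiener process; an It\^o-based exponential contraction in $H$ (monotonicity of $T_{[-M,M]}$ against $w_1\ge 0$, Young's inequality on the cross term $T_{[-M,M]}(v_2)\,\delta w$, smallness of $M$, $L_\beta$, $L_b$); the pullback limit $U(t;s,U_0)\to U^\ast(t)$ in $L^2(\Omega;H)$, independent of $U_0$; the flow property to identify $U^\ast|_{[r,T]}$ as a variational solution; and stationarity from the stationary increments of the noise. The differences are secondary and mildly favourable: your weighted functional $c_m\|\delta v\|^2+\kappa\|\delta w\|^2$ with small $\kappa$ would even remove the smallness requirement on $L_\beta$, at the cost of $M$ depending on $L_\beta$; your uniqueness argument by conditioning on $\xi\sim\nu$ and bounded convergence avoids the bound $\int\|\xi-y\|_H\,\nu(\mathrm{d}y)<\infty$ that the paper invokes; and you obtain the $Z$-moment by passing uniform-in-$s$ $H^{1/2}$ bounds through the pullback limit (weak lower semicontinuity plus stationarity), rather than repeating the support argument of Theorem~\ref{th:existence_inv_measure}.
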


\begin{remark}
    The invariant measure $\mu$ given by Theorem~\ref{th:inv_measure_truncated} is ergodic, that is for any $\varphi\in L^2(H_\infty, \mu)$,
\begin{align*}
    \lim_{T\to +\infty} \frac{1}{T} \int_0^T P_t \varphi\, \dd t
    = \int_{H_\infty} \varphi(y)\, \mu(\dd y) \quad \mbox{in}\,\,  L^2(H_\infty, \mu).
\end{align*}
\end{remark}
\begin{remark}
Note that in contrast to Theorem~\ref{th:existence_inv_measure}, we do not assume that for a given invariant measure $\mu$ there exists a random variable  $\xi$ with the law $\mathcal{L}(\xi)=\mu$. Instead, in the proof of Theorem~\ref{th:inv_measure_truncated}, $\xi=\eta_0$ is constructed explicitly (see \eqref{def_inv_meas}). 
\end{remark}

\section{\texorpdfstring
  {Existence and uniqueness of a variational solution of \eqref{eq:orig-prob-vector_kavian}}
  {Existence and uniqueness of a variational solution}}
\label{sec:proof_Th_existence_solution}
Let us start by proving that the second equation in \eqref{eq:stochastic_kavian}
admits a uniform a priori estimate.
\begin{lemma}
\label{lm:bound-w_m}
Let $v\in L^2(\Omega; C([0,T];L^2(\Gamma)))$ be a continuous $\mathcal{F}_t$-adapted process, and $\beta$ as in \ref{assumption:f_beta}. Assume that $w_0 \in L^2(\Omega \times \Gamma)$ is $\F_0$-adapted and such that $0\le w_0\le 1$.
Then there exists a unique continuous $\mathcal{F}_t$-adapted process $w \in L^2(\Omega; C^1([0,T];L^2(\Gamma))) $ solving 
\begin{align*}
\begin{alignedat}{1}
\partial_t w &= \max\left(\frac{\beta(v) - w}{\tau_{\rm ep}}, \frac{\beta(v) - w}{\tau_{\rm res}}\right),\\
w|_{t=0} &= w_0,
\end{alignedat}
\end{align*}
and satisfying $0\le w \le 1$,  $\PP$-a.s..
\end{lemma}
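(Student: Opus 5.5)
The plan is to treat the equation for $w$ as an ODE in the Banach space $L^2(\Gamma)$ with a random, time-dependent forcing $v$, solved pathwise for each fixed $\omega$. Set $h(\xi,\eta)=\max\big(\frac{\beta(\xi)-\eta}{\tau_{\rm ep}},\frac{\beta(\xi)-\eta}{\tau_{\rm res}}\big)$. Since $\max(a/\tau_{\rm ep},a/\tau_{\rm res})$ is a Lipschitz function of $a=\beta(\xi)-\eta$ with constant $1/\min(\tau_{\rm ep},\tau_{\rm res})$, assumption \ref{assumption:f_beta} gives
\begin{align*}
|h(\xi_1,\eta_1)-h(\xi_2,\eta_2)|\le C_\tau\big(L_\beta|\xi_1-\xi_2|+|\eta_1-\eta_2|\big).
\end{align*}
Consequently, for a fixed path $v(\omega)\in C([0,T];L^2(\Gamma))$, the map $(t,w)\mapsto h(v(t),w)$ is continuous in $t$ and globally Lipschitz in $w$ on $L^2(\Gamma)$. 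The Picard–Lindelöf theorem (Banach fixed point on $C([0,T];L^2(\Gamma))$ with a weighted sup-norm $\sup_t e^{-\lambda t}\|\cdot\|$) then yields a unique solution $w(\omega)\in C^1([0,T];L^2(\Gamma))$ for every $\omega$.

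To get adaptedness, I would use the Picard iterates $w^{n+1}(t)=w_0+\int_0^t h(v(s),w^n(s))\,\dd s$. By induction each $w^n$ is continuous and $\F_t$-adapted, because $w^n(t)$ depends measurably only on $w_0$ and $v|_{[0,t]}$. The iterates converge uniformly on $[0,T]$ pathwise, so the limit $w$ is continuous and adapted. For the integrability bound I would combine $|h(v,w)|\le C_\tau(1+|w|)$ (using $0\le\beta\le1$) with Gronwall's inequality to get $\sup_t\|w(t)\|_{L^2}\le C(T)(\|w_0\|_{L^2}+1)$, and use $\|\partial_t w\|\le C(1+\|w\|)$. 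Together these give $w\in L^2(\Omega;C^1([0,T];L^2(\Gamma)))$. This part only needs $\beta$ bounded, and does not use the integrability of $v$ beyond continuity.

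The main step is the invariance $0\le w\le1$. Here I would argue pointwise in $x$, which is delicate since $w$ is only an $L^2$-valued $C^1$ function. I would pick a version such that for a.e. $x\in\Gamma$ the map $t\mapsto w(t,x)$ is absolutely continuous and solves the scalar ODE $\partial_t w=h(v(t,x),w)$ for a.e. $t$. This follows from $w(t)=w_0+\int_0^t h\,\dd s$ in $L^2(\Gamma)$ by Fubini. Alternatively, and more cleanly, I would use an energy argument in $L^2$: test with $(w)^-=\max(-w,0)$ and with $(w-1)^+$. Since $\frac{\dd}{\dd t}\frac12\|(w)^-\|^2=-\int \partial_t w\,(w)^-$, and on $\{w<0\}$ we have $\beta(v)-w\ge -w>0$ so $h>0$, this derivative is $\le 0$. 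With $(w_0)^-=0$ we conclude $(w)^-\equiv0$. Symmetrically, on $\{w>1\}$ we have $\beta(v)-w<0$ so $h<0$, giving $\frac{\dd}{\dd t}\frac12\|(w-1)^+\|^2\le0$ and hence $w\le1$. The only technical point is the chain rule for $\|(w)^-\|^2$ along a $C^1([0,T];L^2)$ curve. This holds because $\eta\mapsto\frac12\|\eta^-\|^2$ is Fréchet differentiable on $L^2$ with derivative $-\eta^-$. I expect this justification to be the sole mild obstacle; everything else is standard.
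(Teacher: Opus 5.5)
Your proof is correct and follows essentially the same route as the paper: a pathwise Cauchy--Lipschitz argument in $L^2(\Gamma)$, adaptedness via the Picard iterates, and invariance of $[0,1]$ through the functionals $\frac12\|w^-\|_{L^2(\Gamma)}^2$ and $\frac12\|(w-1)^+\|_{L^2(\Gamma)}^2$ combined with the Fr\'echet chain rule. The only minor difference is that you obtain the $L^2(\Omega;C^1([0,T];L^2(\Gamma)))$ bound by Gr\"onwall before proving invariance, whereas the paper deduces it afterwards from $|\partial_t w|\le \max(1/\tau_{\rm ep},1/\tau_{\rm res})$, which holds once $0\le w\le 1$ is known.
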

\begin{proof}
Set $a(t,x)=\beta(v(t,x))$, $\PP$-a.s.. Due to \ref{B1}, $a \in C([0,T]; L^2(\Gamma))$ and $0\le a \le 1$ a.e. on $(0,T)\times \Gamma$. Since 
\[
\tilde{f}(t,w)=\max \left(\frac{a(t)-w}{ \tau_{\rm ep}}\, ,\,
\frac{a(t)-w}{\tau_{\rm res}}\right):[0,T] \times L^2(\Gamma) \to L^2(\Gamma)
\] 
is  continuous in $t$ and Lipschitz continuous in $w$, uniformly in time, 
\begin{align*}
\|\tilde{f}(t,w_1)-\tilde{f}(t,w_2)\|_{L^2(\Gamma)}\le C \|w_1-w_2\|_{L^2(\Gamma)},
\end{align*}
the existence and uniqueness of a solution $w \in C^1([0,T]; L^2(\Gamma))$, $\PP$-a.s., is given by the Cauchy-Lipschitz theorem (Theorem 3.8.1 and the remark at the end of Chapter 3, \cite{ciarlet2025linear}). 
Moreover, since $w$ can be obtained as a limit of the Picard iterations
\begin{align*}
    w^{n+1}(t) = w^n(t) + \int_0^t \tilde{f}(s, w^n(s))\, \dd s,
\end{align*}
$w_0$ is $\F_0$-measurable, and $v$ is $\F_t$-adapted, $w$ is also $v$ is $\F_t$-adapted.
Let us prove that the uniform estimate $0\le w_0\le 1$ is preserved. Denote $w^-(t,x)=\max\{-w, 0\}$ the negative part of $w$ and consider
\begin{align*}
    m(t):=\frac{1}{2}\|w^-\|_{L^2(\Gamma)}^2 = \frac{1}{2}\int_\Gamma |\max(-w, 0)|^2\, \dd S.
\end{align*}
Since $w_0\ge 0$, $m(0)=0$. 
Let us compute the Fréchet derivative of $\Phi(w)=\frac{1}{2}\|w^-\|_{L^2(\Gamma)}^2$. To this end, note that for $r\in \mathbb{R}$
\begin{align*}
\phi(r) = \frac{1}{2}|r^-|^2
= \begin{cases}
    \frac{1}{2}r^2, \,\, r<0\\ 0, \,\, r\ge 0
\end{cases}
\Rightarrow
\phi'(r) = \begin{cases}
    r, \,\, r<0\\ 0, \,\, r\ge 0
\end{cases}
= \min(r,0)= - r^-.
\end{align*}
By the chain rule for integral functionals,
\begin{align*}
D\Phi(w)h = \int_\Gamma \phi'(w) h\, \dd S
= -\int_\Gamma w^- h\, \dd S.
\end{align*}
Since $w \in C^1((0,T); L^2(\Gamma))$ $\PP$-a.s., the chain rule gives
\begin{align*}
m'(t)= D\Phi(w) \partial_t w = - \int_\Gamma w^- \partial_t w \, \dd S
= \int_{\{x: \, w < 0\}} w \tilde{f}(t, w)\, \dd S.
\end{align*}
For $w<0$, $a-w \geq 0$, and we have $m'(t)\le 0$. Since $m(0)=0$ and $m(t) \ge 0$ for all $t$, the non-positive derivative implies that $m(t)=0$ for all $t$. Indeed, since $m(t)$ is absolutely continuous,
\begin{align*}
m(t)=m(0)+\int_0^t m'(s)\, \dd {s} \le 0,
\end{align*}
at the same time as $m(t)\ge 0$ by definition. Thus $m(t)=0$ and, consequently, $w\ge 0$.\\
To prove the upper bound, we define
\begin{align*}
    M(t):=\frac{1}{2}\|(w-1)^+\|_{L^2(\Gamma)}^2 = \frac{1}{2}\int_\Gamma |\max(w-1, 0)|^2\, \dd S.
\end{align*}
Since $w_0\le 1$, $M(0)=0$. By the chain rule,
\begin{align*}
M'(t)
=
\int_\Gamma (w-1)^+\partial_t w\,\dd S
=
\int_{\{x:\,w>1\}} (w-1)\tilde f(t,w)\,\dd S.
\end{align*}
On the set $\{x: \, w>1\}$, we have $a-w \le 0$, and thus $M'(t)\le 0$, which implies
\begin{align*}
    0\le M(t)= M(0) + \int_0^t M'(s)\, \dd s \le 0.
\end{align*}
Hence, $w \le 1$ a.e. on $(0,T)\times \Gamma$.
Finally, $0\le a, w\le 1$ implies $|a-w|\le 1$, thus \begin{align*}
|\partial_t w|\le \max\big(\frac{1}{\tau_{\rm ep}}, \frac{1}{\tau_{\rm res}}\big) \quad \Rightarrow \quad
\sup_{t\in[0,T]} \|\partial_t w\|_{L^2(\Gamma)}^2 \le C.
\end{align*}
This gives $w \in L^2(\Omega; C^1([0,T]; L^2(\Gamma)))$.
\end{proof}

\begin{proof}[Proof of Theorem \ref{th:existence-(0,T)_kavian}]

{
Let us introduce a truncation $T_{[0,1]}: \mathbb{R} \to [0,1]$ defined by
\begin{align*}
    T_{[0,1]}(r) = \max(0 \, , \, \min(1\, , \, r)), \quad r\in \mathbb{R}.
\end{align*}
It is clear that $T_{[0,1]}$ is Lipschitz continuous. We will use the same notation for the associated Nemytskii operator on $L^2(\Gamma)$: $(T_{[0,1]} w)(x) = T_{[0,1]}(w(x))$. \\
We modify the nonlinear term in \eqref{eq:stochastic_kavian} and consider
\begin{equation}
\label{eq:stochastic_kavian_truncation_w}
\begin{aligned}
c_m \dd \bar{v} &= -(\mathcal{A} \bar{v} + S_0\bar{v} + S_1 T_{[0,1]}(\bar{w})\bar{v} + {\sigma_{\rm i} \nabla p \cdot \n})\, \dd t + b(\bar{v})\, \dd W_t
 &\quad& \text{on}\,\, (0,T]\times \Gamma,\\
\dd \bar{w} &= f(\bar{v},\bar{w})\, \dd t &\quad& \text{on}\,\, (0,T]\times \Gamma,\\
\bar{v}(0,x)&=v_0(x), \quad \bar{w}(s,x)=w_0(x)&\quad& \text{on}\,\,  \Gamma.
\end{aligned}
\end{equation}
\begin{lemma}
   Let assumptions \ref{assumption_sigma}--\ref{assumption:g}, \ref{B1}, \ref{B2} hold. A pair $(v,w)$ is a variational solution of \eqref{eq:stochastic_kavian} if and only if it is a solution of \eqref{eq:stochastic_kavian_truncation_w}. 
\end{lemma}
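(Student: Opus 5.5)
The plan is to show that any variational solution of either problem automatically has its $w$-component in $[0,1]$. Once that holds, $T_{[0,1]}(w)=w$, and the two drifts coincide along the solution.

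First, take a variational solution $(v,w)$ of \eqref{eq:stochastic_kavian}. By Definition \ref{def:variational}, $v$ is a continuous $\F_t$-adapted $L^2(\Gamma)$-valued process with $\E\sup_t\|v\|^2_{L^2(\Gamma)}<\infty$. The $w$-component solves, pathwise, the ODE $\partial_t w = f(v,w)$ with $0\le w_0\le 1$. Lemma \ref{lm:bound-w_m} applies with this $v$: the ODE has a unique solution, and that solution satisfies $0\le w\le 1$ $\PP$-a.s. Uniqueness of the Lipschitz ODE in $L^2(\Gamma)$ identifies the $w$-component with this solution. Hence $T_{[0,1]}(w)=w$ a.e. on $(0,T)\times\Gamma$, $\PP$-a.s., and therefore $S_1T_{[0,1]}(w)v = S_1 wv$ in $L^2(\Gamma)$ (and so in $H^{-1/2}(\Gamma)$) for a.e. $t$. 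Substituting into \eqref{eq:var_sol_kavian}, both the drift integral and the stochastic integral are unchanged, so $(v,w)$ solves \eqref{eq:stochastic_kavian_truncation_w} in the variational sense.

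For the converse, take a variational solution $(\bar v,\bar w)$ of \eqref{eq:stochastic_kavian_truncation_w}. Its $\bar w$-equation is exactly $\partial_t\bar w=f(\bar v,\bar w)$ and does not involve the truncation. Lemma \ref{lm:bound-w_m} again gives $0\le\bar w\le1$, so $T_{[0,1]}(\bar w)=\bar w$. The equation for $\bar v$ then coincides with the first equation of \eqref{eq:stochastic_kavian}, so $(\bar v,\bar w)$ is a variational solution of \eqref{eq:stochastic_kavian}.

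The only delicate step is checking that Lemma \ref{lm:bound-w_m} applies to $v$ coming from a variational solution. This needs two things. First, $v\in L^2(\Omega;C([0,T];L^2(\Gamma)))$, which holds because continuity in $H$ is part of the definition and the moment bound is part of the solution class. Second, the $w$-component of the variational identity, read in $L^2(\Gamma)$, is the integral form of the ODE. This is immediate because the second component of $V^\ast$ is $L^2(\Gamma)$ and carries no noise. The a.e.-in-time identification of the drift terms then suffices, since the drifts only enter through the time integrals.
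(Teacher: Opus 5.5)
Your proposal is correct and follows the paper's own argument: Lemma~\ref{lm:bound-w_m} forces $0\le w\le 1$, so $T_{[0,1]}(w)=w$ and the two drifts coincide along any solution, in either direction. You make explicit the two directions and the applicability check that the paper leaves implicit. One small inaccuracy: the bound $\E\sup_t\|v\|_{L^2(\Gamma)}^2<\infty$ is not part of Definition~\ref{def:variational}, but you do not need it, since the proof of Lemma~\ref{lm:bound-w_m} is pathwise and uses only that $v$ is adapted with $v\in C([0,T];L^2(\Gamma))$ $\PP$-a.s.
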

\begin{proof}
Due to Lemma \ref{lm:bound-w_m}, for $0\le w_0\le 1$, the second component $w$ satisfies the same bound. Thus, $(T_{[0,1]} w)(x)=w(x)$ a.e. on $\Gamma$.
\end{proof}
Thus, to prove the well-posedness of \eqref{eq:stochastic_kavian}, it is sufficient to prove the existence and uniqueness of a solution of \eqref{eq:stochastic_kavian_truncation_w}. 
Introducing the vector $U=(v,w)$, \eqref{eq:stochastic_kavian} transforms into the following SPDE in the matrix form:
\begin{align}
\label{eq:orig-prob-vector_kavian_truncation_w}
\dd \bar{U} = (\mathbb{A}_1(\bar{U})+P)\, \dd t + B(\bar{U})\, \dd W_t, \quad \bar{U}(t=0)=U_0.
\end{align}
The operator $\mathbb{A}_1(\cdot)$ acts from $V=H^{1/2}(\Gamma) \times  L^2(\Gamma)$ to $V^\ast = H^{-1/2}(\Gamma) \times  L^2(\Gamma)$, and is given by
\begin{align}
\label{def:A_kavian_truncation_w}
\begin{alignedat}{1}
&\mathbb{A}_1(\bar{U}) = 
\begin{pmatrix}
\displaystyle
    -\frac{1}{c_m}(\mathcal{A} + S_0 \, {\rm Id})(\bar{v}) &0\\ 0&0
\end{pmatrix}
+
\begin{pmatrix}
\displaystyle
    -\frac{S_1}{c_m}\, T_{[0,1]}(\bar{w})\bar{v} \\ f(v,w)
\end{pmatrix}
, 
\quad 
P = \begin{pmatrix}
\displaystyle
    - \frac{1}{c_m}\sigma_{\rm i} \nabla p \cdot \n \\ 0
\end{pmatrix},
\\[2mm]
\quad
&
B(U) \dd W_t = \begin{pmatrix}
\displaystyle
    \frac{1}{c_m}b(v)\dd W_t \\ 0
\end{pmatrix},
\quad
U_0 = \begin{pmatrix}
    v_0\\w_0
\end{pmatrix}.
\end{alignedat}
\end{align}
Let us prove that operators $\mathbb{A}_1, B$ satisfy assumptions (H1)--(H4) in Section 5.1, \cite{liu2015stochastic}. 
}
\begin{lemma}[Properties of the operator $\mathbb{A}_1$]
\label{lm:properties-A_kavian}
Let hypotheses \ref{assumption_sigma}, \ref{assumption:f_beta} hold. The nonlinear operator $\mathbb A_1$ given by \eqref{def:A_kavian_truncation_w} satisfies the following estimates:
\begin{enumerate}
\item[(i)] \textbf{Hemicontinuity}\\ For any $U_1, U_2, U \in V$, the map $s \mapsto \langle \mathbb{A}_1(U_1+s U_2), U\rangle_{V^\ast, V}$ is continuous on $\mathbb{R}$.
\item[(ii)] \textbf{Local monotonicity}\\ For any $U_1, U_2 \in V$, there exists a constant $K\ge 0$: 
    \begin{align*}
        \langle \mathbb{A}_1(U_1)-\mathbb{A}_1(U_2), U_1-U_2\rangle_{V^\ast, V}  \le K(1 + \|U_2\|_V^2) \, \|U_1-U_2\|_H^2.
    \end{align*}
\item[(iii)] \textbf{Generalized coercivity}\\ For any $U=(v,w)\in V$, there exist $C_0,C_1, C_2 >0$ such that
    \begin{align*}
    \langle \mathbb{A}_1(U), U\rangle_{V^\ast, V} &\le -C_0\|U\|_V^2
    +C_1 \|U\|_H^2 + C_2.
    \end{align*}
\item[(iv)] \textbf{Generalized growth condition}\\
For any $U=(v,w)\in V$, there exists a constant $K>0$ such that
\begin{align*}
    \|\mathbb A_1(U)\|_{V^\ast} 
    \le
    K(1+\|w\|_{L^2(\Gamma)})(1+\|v\|_{H^{1/2}(\Gamma)}).
\end{align*}
\end{enumerate} 
\end{lemma}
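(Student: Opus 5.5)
We verify the four properties one at a time, writing $U=(v,w)$, $U_i=(v_i,w_i)$ and using the componentwise structure of $V=H^{1/2}(\Gamma)\times L^2(\Gamma)$. The ingredients are Lemma~\ref{lm:L} for the linear part $\mathcal A+S_0\,\mathrm{Id}$, the Lipschitz continuity of $f$ (from \ref{assumption:f_beta}) and of $T_{[0,1]}$, the bound $0\le T_{[0,1]}\le 1$, and the Sobolev embedding $H^{1/2}(\Gamma)\hookrightarrow L^4(\Gamma)$ on the two-dimensional Lipschitz surface $\Gamma$.

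\emph{Hemicontinuity (i).} The linear part is continuous. For the reaction term, $\int_\Gamma T_{[0,1]}(w_1+sw_2)(v_1+sv_2)\varphi\,\dd S$ is continuous in $s$ by dominated convergence, since $T_{[0,1]}$ is continuous and bounded by $1$, and $(v_1+sv_2)\varphi$ is dominated in $L^1$ for $s$ in a compact set. The term $\int_\Gamma f(v_1+sv_2,w_1+sw_2)\psi\,\dd S$ is continuous in $s$ because $f$ is Lipschitz.

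\emph{Coercivity (iii).} Pair with $U$. Non-negativity of $S_0 \|v\|^2$ and Lemma~\ref{lm:L}(i) with some $\alpha$ give
\[
-\tfrac1{c_m}\langle(\mathcal A+S_0)v,v\rangle\le -\tfrac{\Lambda}{c_m}\|v\|_{H^{1/2}}^2+\tfrac{\alpha}{c_m}\|v\|_{L^2}^2 .
\]
The $S_1$ term is $-\frac{S_1}{c_m}\int_\Gamma T_{[0,1]}(w)v^2\,\dd S\le 0$. Since $f$ is Lipschitz and $|f(0,0)|\le C$, we get $(f(v,w),w)\le C(\|v\|_{L^2}^2+\|w\|_{L^2}^2+1)$. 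The $V$-norm of $w$ is only its $L^2$-norm, so we add and subtract $C_0\|w\|_{L^2}^2$, which is absorbed in $C_1\|U\|_H^2$. This yields (iii).

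\emph{Growth (iv).} Lemma~\ref{lm:L}(iii) gives $\|(\mathcal A+S_0)v\|_{H^{-1/2}}\le C\|v\|_{H^{1/2}}$. Also $\|T_{[0,1]}(w)v\|_{H^{-1/2}}\le\|T_{[0,1]}(w)v\|_{L^2}\le\|v\|_{L^2}$, and $\|f(v,w)\|_{L^2}\le C(1+\|v\|_{L^2}+\|w\|_{L^2})$. Each of these is bounded by $K(1+\|w\|_{L^2})(1+\|v\|_{H^{1/2}})$.

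\emph{Local monotonicity (ii).} This is the main step. The linear part contributes $\le\frac{\alpha}{c_m}\|v_1-v_2\|_{L^2}^2$, using non-negativity of $\mathcal A$. The $f$ part is bounded by $C\|U_1-U_2\|_H^2$. For the bilinear term we split
\[
T(w_1)v_1-T(w_2)v_2=T(w_1)(v_1-v_2)+\bigl(T(w_1)-T(w_2)\bigr)v_2 .
\]
The first piece, paired with $-(v_1-v_2)$, is non-positive since $T\ge0$. This is exactly why the truncation $T_{[0,1]}$ is needed: otherwise a term like $w_1(v_1-v_2)^2$ without a sign would appear. For the second piece we use Hölder, the Lipschitz bound on $T$, and $H^{1/2}\hookrightarrow L^4$:
\[
\Bigl|\int_\Gamma (T(w_1)-T(w_2))v_2(v_1-v_2)\,\dd S\Bigr|\le \|w_1-w_2\|_{L^2}\|v_2\|_{L^4}\|v_1-v_2\|_{L^4}\le C\|w_1-w_2\|_{L^2}\|v_2\|_{H^{1/2}}\|v_1-v_2\|_{H^{1/2}} .
\]
Here $\|v_1-v_2\|_{H^{1/2}}$ must not be bounded by an $H$-norm. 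Instead we keep the dissipative term: applying Lemma~\ref{lm:L}(i) to $v_1-v_2$ gives $-\frac{\Lambda}{c_m}\|v_1-v_2\|_{H^{1/2}}^2+\frac{\alpha}{c_m}\|v_1-v_2\|_{L^2}^2$. Young's inequality then absorbs $\|v_1-v_2\|_{H^{1/2}}^2$ into this term, leaving $C\|v_2\|_{H^{1/2}}^2\|w_1-w_2\|_{L^2}^2$. Since $\|v_2\|_{H^{1/2}}\le\|U_2\|_V$, this gives $K(1+\|U_2\|_V^2)\|U_1-U_2\|_H^2$.

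The expected obstacle is this Young/absorption step: it relies on the $L^4$ embedding and on the sign of the first piece, and without the truncation $T_{[0,1]}$ the sign argument fails.
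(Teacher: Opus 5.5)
Your proposal is correct and follows the paper's proof essentially step by step. In (ii) you split $T_{[0,1]}(w_1)v_1-T_{[0,1]}(w_2)v_2$ exactly as the paper does, control the cross term via H\"older, $H^{1/2}(\Gamma)\hookrightarrow L^4(\Gamma)$ and Young's inequality, and absorb $\|v_1-v_2\|_{H^{1/2}(\Gamma)}^2$ into the dissipation from Lemma~\ref{lm:L}(i), which leaves $C\|v_2\|_{H^{1/2}(\Gamma)}^2\|w_1-w_2\|_{L^2(\Gamma)}^2$.

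The remaining differences are minor. You discard the diagonal term using $T_{[0,1]}\ge 0$, where the paper bounds it using $T_{[0,1]}\le 1$. In (iii) you use only the Lipschitz bound on $f$ together with the fact that the $w$-components of the $V$- and $H$-norms coincide, rather than \eqref{f_monotonicity}. Your dominated-convergence argument for (i) is more careful than the paper's ``polynomial in $s$'' remark.
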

\begin{proof}[Proof of Lemma \ref{lm:properties-A_kavian}]
$ $\\
(i) The operator $\mathbb{A}_1$ is hemicontinuous since the dependence on $s$ is polynomial in the first equation for $v$ and Lipschitz in the equation for $w$. 

\noindent
(ii) Take arbitrary $U_1=(v_1, w_1), U_2=(v_2, w_2) \in V$ and denote $\delta v :=v_1-v_2$, $\delta w:= w_1-w_2$. By the definition of $\mathbb{A}_1$ \eqref{def:A_kavian}, we have
\begin{align}
\label{est:loc-monotonicity_kavian}
\langle \mathbb{A}_1(U_1) - \mathbb{A}_1(U_2), U_1-U_2\rangle_{V^\ast, V} \nonumber
=& -\frac{1}{c_m}\langle (\mathcal{A}+S_0\, {\rm Id})\,\delta v , \delta v \rangle_{H^{-1/2}(\Gamma), H^{1/2}(\Gamma)} \nonumber \\
&- \frac{S_1}{c_m} \big(T_{[0,1]}(w_1) v_1 - T_{[0,1]}(w_2) v_2), \delta v)_{L^2(\Gamma)} \\
&+ \big(f(v_1,w_1) - f(v_2,w_2), \delta w \big)_{L^2(\Gamma)} =: I_1 + I_2 + I_3. \nonumber
\end{align}
The first term $I_1$ is estimated using \eqref{Gaarding}:
\begin{align}
\label{est:I_1_kavian}
    I_1 \le - \Lambda \|\delta v\|_{H^{1/2}(\Gamma)}^2 .
\end{align}
Adding and subtracting $v_2 T_{[0,1]}(w_1)$, using the embedding $H^{1/2}(\Gamma) \hookrightarrow L^4(\Gamma)$ and Young's inequality with a parameter $\gamma>0$, we estimate $I_2$ in the following way:
\begin{align}
\label{est:I_2_kavian}
\begin{alignedat}{1}
I_2 &= -\frac{S_1}{c_m} \big(v_2 \, (T_{[0,1]}(w_1) v_1 - T_{[0,1]}(w_2)), \delta v)_{L^2(\Gamma)}  + 
\frac{S_1}{c_m} \big( T_{[0,1]}(w_1) \delta v, \delta v)_{L^2(\Gamma)}\\
& \le
\frac{S_1 \gamma}{2c_m} \|\delta v\|_{H^{1/2}(\Gamma)}
+ C_\gamma \|v_2\|_{H^{1/2}(\Gamma)} \|\delta w\|_{L^2(\Gamma)}^2
+ \frac{S_1}{c_m}\|\delta v\|_{L^2(\Gamma)}^2.
\end{alignedat}
\end{align}
To estimate $I_3$, we note that the function
\begin{align*}
    h(r)= \max \big(\frac{r}{\tau_{\rm ep}} , \frac{r}{\tau_{\rm res}}\big), \quad r\in \mathbb{R}
\end{align*}
is monotone, Lipschitz, and satisfies
\begin{align}
    \label{f_monotonicity}
    \min(\frac{1}{\tau_{\rm ep}}, \frac{1}{\tau_{\rm res}}) |a-b|^2
    \le
    (h(a)-h(b))(a-b) \le \max(\frac{1}{\tau_{\rm ep}}, \frac{1}{\tau_{\rm res}}) |a-b|^2, \quad a,b\in \mathbb{R}.
\end{align}
Thus, adding and subtracting $\delta \beta := \beta(v_1)-\beta(v_2)$, using \eqref{f_monotonicity}, and Lipschitz continuity of $\beta$, $I_3$ in \eqref{est:loc-monotonicity_kavian} is estimated as follows:
\begin{align}
    \label{est:I_3_kavian}
    I_3 \le - \frac{1}{4\tau_{\rm res}} \|\delta w\|_{L^2(G)}^2  + C \|\delta v\|_{L^2(G)}^2.
\end{align}
Finally, combining estimates \eqref{est:I_1_kavian}, \eqref{est:I_2_kavian}, and \eqref{est:I_3_kavian}, and choosing $\gamma$ such that $\Lambda - \frac{S_1 \gamma}{2c_m} > 0$ yields the local monotonicity (ii).

\medskip
\noindent
(iii) Take arbitrary $U=(v,w) \in V$. Using the coercivity estimate \eqref{Gaarding} and the monotonicity \eqref{f_monotonicity}, we obtain 
\begin{align*}
    \langle \mathbb A_1(U), U\rangle_{V^\ast, V} &
    = -\frac{1}{c_m} \langle (\mathcal{A}+S_0\, {\rm Id})\, v , v \rangle_{H^{-1/2}(\Gamma), H^{1/2}(\Gamma)} \nonumber 
- \frac{S_1}{c_m} \big(T_{[0,1]}(w) v, v)_{L^2(\Gamma)} \\
&+ \big(f(v,w), w \big)_{L^2(\Gamma)}\\
    &\le -\frac{\Lambda}{c_m} \|v\|_{H^{1/2}(\Gamma)}^2 
    - \frac{1}{4 \tau_{\rm res}}\|w\|_{L^2(\Gamma)}^2 
    + \frac{S_1}{c_m}\|v\|_{L^2(\Gamma)}^2 +C(1+ \|v\|_{L^2(\Gamma)}^2\\
    & \le -C_0 \|U\|_V^2 + C_1 \|U\|_H^2 + C_2.
\end{align*}

\noindent
(iv) To check the growth condition, we take arbitrary $U=(v, w), \Phi=(\varphi, \psi)\in V$ and estimate
\begin{align*}
\|\mathbb A_1(U)\|_{V^\ast} = \sup_{\Phi \in V\setminus\{0\}}\Big\{ \langle \mathbb A_1(U), \Phi\rangle_{V^\ast, V}: \,\, \|\Phi\|_V=1\Big\}.
\end{align*}
By Lemma \ref{lm:L} and \eqref{f_monotonicity},
\begin{align*}
 \langle \mathbb A_1(U), \Phi\rangle_{V^\ast, V}
 &= -\frac{1}{c_m}\langle (\mathcal{A}+S_0{\rm Id}) v, \varphi \rangle_{H^{-1/2}(\Gamma), H^{1/2}(\Gamma)}
 -\frac{S_1}{c_m}(T_{[0,1]}(w)\, v , \varphi)_{L^2(\Gamma)}
 + (f(v,w), \psi)_{L^2(\Gamma)}\\
 &\le C\|v\|_{H^{1/2}(\Gamma)}\|\varphi\|_{H^{1/2}(\Gamma)} 
 + \frac{S_0+S_1}{c_m}\|v\|_{L^2(\Gamma)}\|\varphi\|_{L^2(\Gamma)}
 + C(1+\|w\|_{L^2(\Gamma)})\|\psi\|_{L^2(\Gamma)}.
\end{align*}
Then
\begin{align*}
\langle \mathbb A_1(U), \Phi\rangle_{V^\ast, V}
 & \le C(1+\|U\|_V)(\|\varphi\|_{H^{1/2}(\Gamma)} + \|\psi\|_{L^2(\Gamma)}),
\end{align*}
which together with the normalization $\|\varphi\|_{H^{1/2}(\Gamma)}^2 + \|\psi\|_{L^2(\Gamma)}^2=1$ yields the desired estimate. Lemma \ref{lm:properties-A_kavian} is proved.
\end{proof}
By Theorem 5.1.3 and Lemma 5.1.5 in \cite{liu2015stochastic}, for any $U_0 \in L^2(\Omega, \F, \PP; H)$, there exists a variational solution $(\bar{U}(t))_{t\in [0,T]}$ of \eqref{eq:orig-prob-vector_kavian_truncation_w} satisfying (ii) in Theorem \ref{th:existence-(0,T)_kavian}.
Moreover, if the external excitation $g=g(x)$ so that the system \eqref{eq:orig-prob-vector_kavian_truncation_w} is autonomous, the solution $U(t)$ is a Markov process.

In view of Lemma \ref{lm:bound-w_m}, the solution $\bar{U}$ is also a solution of the non-truncated system \eqref{eq:orig-prob-vector_kavian}. The well-posedness of \eqref{eq:orig-prob-vector_kavian} is proved.

\end{proof}

\section{Properties of the variational solution}
\label{sec:properties_of_solution}
\subsection{Uniform in time estimates}

In this section, we will derive uniform in time estimates for the $\E\|U\|_H^2$ and the time averages of $\E\|U\|_V^2$ under the assumption that the Lipschitz constant in \ref{B1} is sufficiently small. In contrast to \cite{liu2015stochastic}, where the lack of estimates in stronger norms prevented from establishing the existence of invariant measure unless ${\rm dim}(H) < \infty$, we derive such estimates thanks to the special structure of the coupled system. Note that \eqref{eq:better-estimate-w_H^1/2} is derived under the assumption $w_0 \in H^{1/2}(\Gamma)$. Indeed, the regularity of $w$ in is inherited from the initial data and the regularity of $v$.
\begin{lemma}[Improved estimates]
\label{lm:uniform_time_estimate}
Let the hypotheses \ref{assumption_sigma}-\ref{assumption:f_beta}, \ref{B1}-\ref{B2} be satisfied. Assume in addition that the Lipschitz constant in \ref{B1} for some positive $\kappa>0$ satisfies the condition
\begin{align}
\label{eq:small_Lip_noise}
    c_m \Lambda - L_{\rm b} \ge 2\kappa >0,
\end{align}
where $\Lambda$ is the coercivity constant in \eqref{Gaarding}.
Then there exists $\lambda=\lambda(\Lambda, c_m, L_{\rm b}) >0$ and a constant $C>0$ independent of $t$ such that, for any $t\ge 0$,
\begin{align}
\label{eq:better-estimate-v_L^2}
\E \|v(t)\|_{L^2(\Gamma)}^2
\le 
& e^{-\lambda t} \, \E \|v_0\|_{L^2(\Gamma)}^2 
+\frac{C}{\lambda} \, \Big(\|b(0)\|_{L_2^0}^2+ \sup_{t\ge 0} \|g\|_{H^{1/2}(\partial G)}^2\Big),\\
\label{eq:better-estimate-v_H^1/2}
\E\int_0^t \|v(s)\|_{H^{1/2}(\Gamma)}^2\, \dd s
\le 
& \, \E \|v_0\|_{L^2(\Gamma)}^2 
+C \, t \, \Big(\|b(0)\|_{L_2^0}^2+ \sup_{t\ge 0} \|g\|_{H^{1/2}(\partial G)}^2\Big),\\
\label{eq:better-estimate-w_L^2}
\E \|W_t\|_{L^2(\Gamma)}^2
\le 
& \, e^{-\frac{t}{\tau_{\rm res}}} \, \E \|w_0\|_{L^2(\Gamma)}^2 
+C,\\
\label{eq:better-estimate-w_H^1/2}
{\E\int_0^t \|W_s\|_{H^{1/2}(\Gamma)}^2\, \dd s
\le }
& {\, 
\max(1,2\tau_{\rm res}) \E \|w_0\|_{H^{1/2}(\Gamma)}^2
+ C\, t \,\Big(\|b(0)\|_{L_2^0}^2+ \sup_{t\ge 0} \|g\|_{H^{1/2}(\partial G)}^2\Big).}
\end{align}
\end{lemma}
\begin{remark}
As it can be seen from the proof of \eqref{eq:better-estimate-v_L^2}, in the case of additive or bounded $\|b\|_{L_2^0} \le C$ noise, no smallness assumption is needed. 
\end{remark}
\begin{proof}

We consider the two equations in \eqref{eq:stochastic_kavian} for $v$ and $w$ separately.

The product rule combined with the Itô formula for the first component, after taking the expectation, gives 
\begin{align}
\label{eq:E-of-norm_kavian-aux}
\begin{alignedat}{2}
\E\Big[e^{\lambda t} \|v(t)\|_{L^2(\Gamma)}^2\Big]  =& \E \|v_0\|_{L^2(\Gamma)}^2 
- \frac{2}{c_m} \int_0^t e^{\lambda s} \E \Big[ \langle ( \mathcal{A}+S_0{\rm Id}) v, v \rangle_{H^{-1/2}(\Gamma), H^{1/2}(\Gamma)} + (S_1 v w, v)_{L^2(\Gamma)}\Big]\, \dd s\\
& - \frac{2}{c_m} \int_0^t e^{\lambda s} \E \Big[ \langle  \sigma_{\rm i} \nabla p \cdot \n, v \rangle_{H^{-1/2}(\Gamma), H^{1/2}(\Gamma)} \Big] \, \dd s
 + \frac{1}{c_m^2}\int_0^{t} e^{\lambda s} \E \Big[ \| b(v)\|_{L_2^0}^2 \Big] \dd s\\
 & + \lambda \int_0^t e^{\lambda s} \E \|v(s)\|_{L^2(\Gamma)}^2\, \dd s.
\end{alignedat}
\end{align}
For brevity, we omit the argument $s$ under the integrals in time and write, for example, $\int_0^{t} e^{\lambda s} \E \Big[ \| b(v)\|_{L_2^0}^2 \Big] \dd s$ for $\int_0^{t} e^{\lambda s} \E \Big[ \| b(v(s, \cdot))\|_{L_2^0}^2 \Big] \dd s$.   
The constant $\lambda>0$ will be chosen later.
Using the coercivity estimate \eqref{Gaarding}, the boundedness $0\le w\le 1$, the estimate $\sup_t\|\sigma_{\rm i} \nabla p \cdot \n\|_{H^{-1/2}(\Gamma)} \le C\sup_t \|g\|_{H^{1/2}(\partial G)}$, and the Lipschitz property of $b(v)$, we arrive at
\begin{align*}
\begin{alignedat}{1}
\E\Big[ e^{\lambda t} \|v(t)\|_{L^2(\Gamma)}^2\Big]  \le & \E \|v_0\|_{L^2(\Gamma)}^2 
- \frac{2 \Lambda}{c_m} \int_0^t e^{\lambda s}\E  \|v(s)\|_{H^{1/2}(\Gamma)}^2\, \dd s  
\\
& + \frac{2C}{\delta c_m} \int_0^t e^{\lambda s}\sup_{0\le s\le t} \|g\|_{H^{1/2}(\partial G)}^2 \, \dd s
+ \frac{2\delta}{c_m} \int_0^t e^{\lambda s}\E\|v(s)\|_{H^{1/2}(\Gamma)}^2 \, \dd s\\
 &+ \frac{L_{\rm b}}{c_m^2}\int_0^{t} e^{\lambda s}\E \Big[ \|b(0)\|_{L_2^0}^2+ \| v(s)\|_{L^2(\Gamma)}^2 \Big] \dd s
 + \lambda \int_0^t e^{\lambda s} \E \|v(s)\|_{L^2(\Gamma)}^2\, \dd s.
\end{alignedat}
\end{align*}
We choose $\delta>0$ such that $2(\Lambda-\delta)/c_m\ge \Lambda/c_m=: \kappa_0>0$. For sufficiently small $L_{\rm b}>0$, namely under the assumption 
\begin{align*}
2 c_m \Lambda - L_{\rm b} \ge 2 \kappa > 0,
\end{align*}
we can choose $\lambda$ such that 
\begin{align*}
    \frac{2\Lambda}{c_m} - \frac{L_{\rm b}}{c_m^2} - \lambda \ge \kappa >0.
\end{align*}
Rearranging the terms, we have
\begin{align}
\label{eq:better-estimate-aux1}
\begin{alignedat}{1}
\E \Big[e^{\lambda t}\|v(t)\|_{L^2(\Gamma)}^2\Big]  \le & \E \|v_0\|_{L^2(\Gamma)}^2 
- \kappa_0\int_0^t e^{\lambda s}\E  \|v(s)\|_{H^{1/2}(\Gamma)}^2\, \dd s\\  
&- \kappa \int_0^t e^{\lambda s}\E \|v(s)\|_{L^2(\Gamma)}^2\, \dd s +C \, \Big(\|b(0)\|_{L_2^0}^2+ \sup_{t\ge 0} \|g\|_{H^{1/2}(\partial G)}^2\Big)\int_0^t e^{\lambda s}\, \dd s, 
\end{alignedat}
\end{align}
for $\kappa_0, \kappa>0$. 
Note that
\begin{align*}
e^{-\lambda t} \int_0^t e^{\lambda s}\, \dd s
= \frac{1}{\lambda}(1-e^{-\lambda t}) \le \frac{1}{\lambda}, \quad t\ge 0.
\end{align*}
Thus, in particular, \eqref{eq:better-estimate-aux1} implies \eqref{eq:better-estimate-v_L^2}. Repeating the estimates for \eqref{eq:E-of-norm_kavian-aux} with $\lambda=0$, we obtain \eqref{eq:better-estimate-v_H^1/2}. Note that applying the Itô formula for $\|v\|_{L^2(\Gamma)}^2$ without the product rule with $e^{\lambda t}$ does not lead to a uniform in time estimate. Specifically, the last term in the first estimate in \eqref{eq:better-estimate-aux1} without the exponential factor would give $C\, t \, (1+\sup_{t\ge 0} \|g\|_{H^{1/2}(\partial G)}^2)$ which blows up as $t\to \infty$. 
In contrast to the pointwise in time estimate for the $L^2(\Gamma)$-norm, the estimate for the $H^{1/2}(\Gamma)$-norm is integral in time, and grows linearly in $t$.

Next, we turn to $w$. Clearly, since if $0\le w_0 \le 1$, then by Lemma \ref{lm:bound-w_m}, $0\le w \le 1$, and we have a uniform in time estimate for $\E \|w\|_{L^2(\Gamma)}^2$. We can, in addition, specify the dependence on the initial condition, similar to $v$ above. Multiplying \eqref{eq:f} by $w$ and integrating over $\Gamma$ yields, $\PP$-a.s.,
\begin{align}
\label{eq:est-w-aux1}
\frac{1}{2}\frac{\dd}{\dd t} \|w\|_{L^2(\Gamma)}^2
= (w-\beta(v)\, , \, f(v,w))_{L^2(\Gamma)}
+ (\beta(v)\,,  f(v,w))_{L^2(\Gamma)}.
\end{align}
Since $\tau_{\rm ep} < \tau_{\rm res}$,
\begin{align*}
(w-\beta(v)\, , \, f(v,w))_{L^2(\Gamma)}
\le -\frac{1}{\tau_{\rm res}} \|w-\beta(v)\|_{L^2(\Gamma)}^2.
\end{align*}
Using the Young inequality with a parameter in \eqref{eq:est-w-aux1}, we have
\begin{align*}
\frac{1}{2}\frac{\dd}{\dd t} \|w\|_{L^2(\Gamma)}^2
\le -\frac{1}{\tau_{\rm res}}\|w-\beta(v)\|_{L^2(\Gamma)}^2
+ \frac{\delta}{2\tau_{\rm ep}}\|w-\beta(v)\|_{L^2(\Gamma)}^2
+ \frac{1}{2\delta \tau_{\rm ep}}|\Gamma|.
\end{align*}
Choosing $\delta=\tau_{\rm ep}/2\tau_{\rm res}$ such that $\frac{1}{\tau_{\rm res}} - \frac{\delta}{2\tau_{\rm ep}} = \frac{3}{4\tau_{\rm res}}>0$ and using that $0\le \beta \le 1$, we obtain
\begin{align}
\label{eq:est-w-aux3}
\begin{alignedat}{1}
\frac{1}{2}\frac{\dd}{\dd t} \|w\|_{L^2(\Gamma)}^2
& \le
 -\frac{3}{4\tau_{\rm res}}\|w-\beta(v)\|_{L^2(\Gamma)}^2
+ \frac{1}{2\delta \tau_{\rm ep}}|\Gamma|\\
& = -\frac{3}{4\tau_{\rm res}}\|w\|_{L^2(\Gamma)}^2
+ \frac{3}{2\tau_{\rm res}}\int_\Gamma w\,\beta(v)\, \dd S -\frac{3}{4\tau_{\rm res}}\|\beta(v)\|_{L^2(\Gamma)}^2 +
\frac{1}{2\delta \tau_{\rm ep}}|\Gamma|\\
& \le -\frac{1}{2\tau_{\rm res}}\|w\|_{L^2(\Gamma)}^2
+ C.
\end{alignedat}
\end{align}
Thus, multiplying both sides of \eqref{eq:est-w-aux3} by $\exp(t/\tau_{\rm res})$ and integrating in time, yields
\begin{align*}
\|w\|_{L^2(\Gamma)}^2
\le& e^{-\frac{t}{\tau_{\rm res}}}\|w_0\|_{L^2(\Gamma)}^2
+ C,
\end{align*}
and we obtain \eqref{eq:better-estimate-w_L^2}. \\
Finally, let us prove \eqref{eq:better-estimate-w_H^1/2}. This estimate is more delicate since the dependence on the spatial variable $x$ comes in $w$ through $v$.  We first prove the estimate \eqref{eq:better-estimate-w_H^1/2} for smooth $v, w$ by estimating the Sobolev-Slobodeckij seminorm, $|w|_{H^{1/2}(\Gamma)} = \int_\Gamma \int_\Gamma \frac{|w(t,x)-w(t,y)|^2}{|x-y|^3}\dd S_x \dd S_y$, \cite{dinezza2012hitchhiker}. Denote
\begin{align*}
\delta w = w(t, x) - w(t, y), \quad x,y\in \Gamma.
\end{align*}
Then $\delta w$ satisfies the equation
\begin{align}
\label{eq:aux_est_w-1}
\partial_t \delta w = f(v(t,x), w(t,x)) - f(v(t,y), w(t,y)).
\end{align}
Multiplying \eqref{eq:aux_est_w-1} by $\delta w$ and writing $\delta w = \delta \beta - (\delta \beta - \delta w)$ with 
\begin{align*}
\delta \beta = \beta(v(t,x)) - \beta(v(t,y)), 
\end{align*}
we obtain
\begin{align*}
\frac{1}{2}\partial_t |\delta w|^2
& = (f(v(t,x), w(t,x)) - f(v(t,y), w(t,y))) \delta \beta\\ 
& \quad - (f(v(t,x), w(t,x)) - f(v(t,y), w(t,y))) (\delta \beta - \delta w)\\
& \le 
C|\delta \beta - \delta w|\, |\delta \beta|
-\frac{1}{\tau_{\rm res}} |\delta \beta - \delta w|^2 \\
& \le
C |\delta \beta|^2 - \frac{1}{2\tau_{\rm res}} |\delta \beta - \delta w|^2.
\end{align*}
Using the inequality $|a-b|^2\ge \frac{1}{2}|b|^2 - |a|^2$ and rearranging the terms, we have
\begin{align*}
\partial_t |\delta w|^2 + 
\frac{1}{{2}\tau_{\rm res}} |\delta w|^2
& \le
C |\delta \beta|^2.
\end{align*}
Dividing by the $|x-y|^3$ and using the Lipschitz continuity of $\beta$, we obtain the following estimate for the seminorm:
\begin{align*}
\partial_t |w|_{H^{1/2}(\Gamma)}^2 + \frac{1}{2 \tau_{\rm res}} |w|_{H^{1/2}(\Gamma)}^2 \le 
C |v|_{H^{1/2}(\Gamma)}^2.
\end{align*}
Multiplying both sides with $\exp(t/2 \tau_{\rm res})$, integrating over $(0,t)$, and changing order of integration in the second integral, we get, $\PP$-a.s.
\begin{align}
\label{eq:aux_est_w-3}
\begin{alignedat}{2}
\int_0^T |w(t,\cdot)|_{H^{1/2}(\Gamma)}^2\, \dd t 
\le & 
\int_0^T e^{-\frac{t}{2\tau_{\rm res}}} |w_0|_{H^{1/2}(\Gamma)}^2\, \dd t
+
C \int_0^T \int_0^t
e^{-\frac{(t-s)}{2\tau_{\rm res}}}
|v(s,\cdot)|_{H^{1/2}(\Gamma)}^2\, \dd s\, \dd t\\
&\le
2\tau_{\rm res}|w_0|_{H^{1/2}(\Gamma)}^2
+
C \int_0^T |v(s,\cdot)|_{H^{1/2}(\Gamma)}^2 \int_s^T
e^{-\frac{(t-s)}{2\tau_{\rm res}}}
\, \dd t\, \dd s\\
&
\le
2\tau_{\rm res}|w_0|_{H^{1/2}(\Gamma)}^2
+
C \int_0^T |v(s,\cdot)|_{H^{1/2}(\Gamma)}^2\, \dd s.
\end{alignedat}
\end{align}
Estimate for the seminorm \eqref{eq:aux_est_w-3} is obtained for smooth functions $v, w$. Approximating functions $v, w\in L^2(0,T; H^{1/2}(\Gamma))$ pathwise by smooth functions, passing to the limit in \eqref{eq:aux_est_w-3}, and using semicontinuity of the $H^{1/2}(\Gamma)$-norm, estimate \eqref{eq:aux_est_w-3} is justified for $v, w\in L^2(0,T; H^{1/2}(\Gamma))$, $\PP$-a.s.
Finally, combining \eqref{eq:aux_est_w-3} with \eqref{eq:better-estimate-w_L^2}, \eqref{eq:better-estimate-v_H^1/2}, and taking the expectation yields \eqref{eq:better-estimate-w_H^1/2}.
Lemma \ref{lm:uniform_time_estimate} is proved.

\end{proof}


\subsection{Markov and Feller property of variational solutions}
\label{sec:Markov_and_Feller}
Let $s\in[0,T]$ and $\xi \in L^2(\Omega, \F_s, \PP; H_\infty)$. Consider a variational solution of \eqref{eq:orig-prob-vector_kavian} starting at time $s$ with the initial value $\xi:=U(s)$,
\begin{align}
\label{eq:var_sol_s_xi}
U(t)=\xi + \int_s^t (\mathbb{A}(U(r)) + P(r))\, \dd r + \int_s^t B(U(r))\, \dd W_r, \quad t\in[s,T].
\end{align}
The results of Theorem \ref{th:existence-(0,T)_kavian} proved for $s=0$ carry over to this case, and we obtain a unique variational solution which we denote $U(t, s, \xi)$. Then for $0\le r\le s\le T$, the uniqueness implies that, $\PP$-a.s.,
\begin{align*}
    U(t,r,\xi) = U(t, s, U(s, r, \xi)), \quad \mbox{for each }t\in [s,T].
\end{align*}


The goal of this section is to prove that the variational solution defined by \eqref{eq:var_sol_s_xi} satisfies the Markov property and that the corresponding transition semigroup is Feller. The Feller property is a statement about the stability of the dynamics with respect to the initial condition. Because of the quadratic nonlinear term $S_m(v,w)=(S_0+S_1 w) v$, we cannot prove the Lipschitz dependence on the initial data, as it is done in Theorem 9.1 in \cite{da2014stochastic}, which  implies immediately the continuous dependence on the initial data, or Lemma 4.3.11 in \cite{liu2015stochastic} for variational solutions. Note that the truncation $T_{[0,1]}(w)$ used in Theorem \ref{th:existence-(0,T)_kavian} does not yield Lipschitz dependence on the initial condition, but the truncation $T_{[-M, M]}(v)$ used in Section \ref{sec:inv_measure_truncated_reaction} does. For the non-truncated $S_m(v,w)$, we will use a localization trick by introducing a suitable stopping time $\tau_R$ (see \eqref{eq:tau_R_Feller}) and deriving stability estimates up to $\tau_R$.  

For any bounded Borel function $\varphi\in B_b(H_\infty)$ with $H_\infty$ defined in \eqref{def:H_infty}, any $0\le s\le t\le T$, and initial condition $\xi \in H_\infty$, we define the two-parameter transition operators
\begin{align}
\label{def:P_s,t}
        P_{s,t}\varphi(\xi) = \E\big[\varphi(U(t, s, \xi))\big].
\end{align}
In the case of autonomous coefficients in \eqref{eq:stochastic_kavian} it holds $P_{s,t}\varphi(\xi)=P_{t-s}\varphi(\xi)$, and {we can work with the operators
\begin{align}
\label{P_t}
    P_t \varphi(\xi) = P_{0,t}\varphi(\xi)= \E\big[\varphi(U(t, 0, \xi))\big].
\end{align}
This is true for the constant in time excitation $g$ on $\partial \Omega$, but clearly not for an arbitrary time-dependent $g(t,x)$. 
Similarly to Theorem 9.14 in \cite{da2014stochastic}, one can show that the solution $U(t,s,\xi)$ is Markov, that is for arbitrary $\varphi:H_\infty\to \mathbb{R}$ measurable and bounded function, and $0 \le s\le r\le t\le T$ we have
\begin{align*}
    \E\big[\varphi(U(t, s, \xi)) \, | \, \F_r \big]
    = P_{r,t} \varphi (U(r, s, \xi)), \quad \PP-\text{a.s.}
\end{align*}
As a corollary of the Markov property, we obtain the semigroup property for the transition operators: $P_{s,t}=P_{s,r}P_{r,t}$:
\begin{align*}
P_{s,t}\varphi (\xi)
= \E\big[\varphi(U(t, s, \xi))\big]
= \E \Big[\E\big[\varphi(U(t, s, \xi)) \, |\, \F_r\big]\Big]
= \E\big[P_{r,t} \varphi(U(r, s, \xi))\big]
= P_{s,r} \big(P_{r,t} \varphi\big)(\xi).
\end{align*}
Let us prove that the two-parameter transition semigroup satisfies the Feller property in the strong topology of $H_\infty$ inherited from $H=L^2(\Gamma) \times L^2(\Gamma)$.
\begin{lemma}[Feller property]
\label{lm:Feller}
Assume that the hypotheses \ref{assumption_sigma}-\ref{assumption:g}, \ref{B1}-\ref{B2} hold. In addition, assume that condition \eqref{eq:small_Lip_noise} is satisfied.  
Then the two-parameter transition semigroup $P_{s,t}$ defined in \eqref{def:P_s,t} satisfies the Feller property in the strong topology of $H$, that is for every $\varphi \in C_b(H_\infty)$, $P_{s,t}\varphi: H_\infty\to \mathbb{R}$ is continuous. 
\end{lemma}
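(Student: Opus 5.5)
To show $P_{s,t}\varphi$ is continuous on $H_\infty$, fix $\xi\in H_\infty$ and a sequence $\xi_n\to\xi$ in $H$. The aim is to prove that $U(t,s,\xi_n)\to U(t,s,\xi)$ in probability in $H$. Boundedness and continuity of $\varphi$ together with dominated convergence (via the subsequence characterisation of convergence in probability) then give $P_{s,t}\varphi(\xi_n)\to P_{s,t}\varphi(\xi)$.

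Write $U_n=(v_n,w_n)=U(\cdot,s,\xi_n)$, $U=(v,w)=U(\cdot,s,\xi)$, and $\delta v=v_n-v$, $\delta w=w_n-w$. Apply the Itô formula to $\|\delta v\|_{L^2(\Gamma)}^2+\|\delta w\|_{L^2(\Gamma)}^2$, exactly as in the local monotonicity computation of Lemma \ref{lm:properties-A_kavian}(ii). The only problematic term is
\[
-\frac{S_1}{c_m}\big(w_n v_n - w v,\delta v\big)=-\frac{S_1}{c_m}\big(w_n\delta v,\delta v\big)-\frac{S_1}{c_m}\big(v\,\delta w,\delta v\big).
\]
The first piece is $\le 0$, since $w_n\ge 0$ by Lemma \ref{lm:bound-w_m}. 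For the second piece, use $H^{1/2}(\Gamma)\hookrightarrow L^4(\Gamma)$ together with $|\delta w|\le 1$ to get
\[
|(v\,\delta w,\delta v)|\le \varepsilon\|\delta v\|_{H^{1/2}}^2+C_\varepsilon\|v\|_{H^{1/2}}^2\|\delta w\|_{L^2}^2 ,
\]
with $\|v\|_{H^{1/2}}$ for the \emph{fixed} limit solution. The $\varepsilon$-term is absorbed by the coercivity \eqref{Gaarding}. The term $I_3$ is handled by \eqref{f_monotonicity} and the Lipschitz continuity of $\beta$, and the noise by \ref{B1}.

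Collecting these bounds gives
\[
\dd \|\delta U\|_H^2\le K\big(1+\|v(r)\|_{H^{1/2}(\Gamma)}^2\big)\|\delta U\|_H^2\,\dd r+\dd M_r ,
\]
with $M$ a local martingale. The random, only integrable, coefficient is the main obstacle: it prevents a direct Gronwall argument in expectation. I would try two remedies, in this order. First, apply the Itô product rule with the weight $\exp\big(-K\int_s^r(1+\|v\|_{H^{1/2}}^2)\,\dd\rho\big)$, which cancels the drift; this gives
\[
\E\Big[e^{-K\int_s^t(1+\|v\|_{H^{1/2}}^2)}\|\delta U(t)\|_H^2\Big]\le\|\xi_n-\xi\|_H^2 .
\]
As a backup, localize with the stopping time $\tau_R=\inf\{t\ge s:\int_s^t\|v\|_{H^{1/2}}^2\,\dd r\ge R\}\wedge T$. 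Up to $\tau_R$ the coefficient is bounded, so the martingale has zero expectation and Gronwall yields $\E\|\delta U(t\wedge\tau_R)\|_H^2\le e^{K(T+R)}\|\xi_n-\xi\|_H^2$.

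In either variant, convergence in probability then follows by splitting on the event $\{\int_s^T\|v\|_{H^{1/2}}^2\,\dd r>R\}$. By Chebyshev and the bound \eqref{eq:better-estimate-v_H^1/2}, or Theorem \ref{th:existence-(0,T)_kavian}[ii], this event has probability at most $C(1+\|\xi\|_H^2)/R$. Thus, for $\epsilon>0$,
\[
\PP\big(\|\delta U(t)\|_H>\epsilon\big)\le \frac{C}{R}+\frac{e^{K(T+R)}\|\xi_n-\xi\|_H^2}{\epsilon^2}.
\]
Letting $n\to\infty$ and then $R\to\infty$ gives convergence in probability. Because the coefficient involves only the limit $v$ and not $v_n$, no uniformity in $n$ of the $V$-bounds is needed, although it is available anyway from \eqref{eq:better-estimate-v_H^1/2} under \eqref{eq:small_Lip_noise}. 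Finally, $U(t,s,\xi_n)\in H_\infty$ by Lemma \ref{lm:bound-w_m}, so $\varphi(U(t,s,\xi_n))$ is well defined.
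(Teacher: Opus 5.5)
Your proof is correct, and your primary route handles the critical step differently from the paper. Both arguments share the rest of the structure:
\begin{itemize}
\item reduction to convergence in probability;
\item the split $w_nv_n-wv=w_n\delta v+v\,\delta w$, with $v$ the \emph{fixed} limit solution;
\item the final splitting on $\{\int_s^T\|v\|_{H^{1/2}(\Gamma)}^2\,\dd r>R\}$ combined with Chebyshev.
\end{itemize}
The difference is how the random, only time-integrable coefficient $\|v(r)\|_{H^{1/2}(\Gamma)}^2$ is neutralised.

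The paper uses two structural facts: this coefficient multiplies only $\|\delta w\|_{L^2(\Gamma)}^2$, and $\delta w$ obeys a pathwise, noise-free inequality. Substituting that inequality turns $\int\|v_2\|_{H^{1/2}(\Gamma)}^2\|\delta w\|_{L^2(\Gamma)}^2\,\dd r$ into $\rho(t)\bigl(\|\delta w(t_0)\|_{L^2(\Gamma)}^2+2C\int Y\,\dd r\bigr)$. After stopping at $\tau_R$ the coefficient becomes the constant $C(1+R)$, and ordinary Gronwall in expectation applies.

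Your integrating factor $\exp\bigl(-K\int_s^r(1+\|v(q)\|_{H^{1/2}(\Gamma)}^2)\,\dd q\bigr)$ cancels the drift outright. It gives a weighted stability estimate with constant $1$ and no Gronwall step. It uses only the local-monotonicity form $K(1+\|U_2\|_V^2)\|\delta U\|_H^2$, so it would still work if the bad coefficient also multiplied $\|\delta v\|_{L^2(\Gamma)}^2$. The paper's route stays within elementary Gronwall, at the price of that structural substitution. You should add one line showing that the weighted stochastic integral has zero mean. The weight is bounded by $1$, so either localise and use Fatou, or use BDG together with $\E\sup_t\|\delta U(t)\|_H^2<\infty$ from Theorem~\ref{th:existence-(0,T)_kavian}.

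Your backup variant does not work as written. Up to $\tau_R$ only the integral $\int_s^{\tau_R}\|v\|_{H^{1/2}(\Gamma)}^2\,\dd r\le R$ is controlled, not $\|v(r)\|_{H^{1/2}(\Gamma)}^2$ pointwise, so the coefficient is not bounded. After taking expectations, $\E\int_s^{t\wedge\tau_R}\|v\|_{H^{1/2}(\Gamma)}^2\|\delta U\|_H^2\,\dd r$ cannot be bounded by $C\int_s^t\E\|\delta U(r\wedge\tau_R)\|_H^2\,\dd r$, and Gronwall does not apply.

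The claimed bound $\E\|\delta U(t\wedge\tau_R)\|_H^2\le e^{K(T+R)}\|\xi_n-\xi\|_H^2$ is nonetheless true. It follows from your weighted estimate, because the weight is at least $e^{-K(T+R)}$ on $[s,\tau_R]$. It also follows from the paper's substitution, which is exactly the step your backup omits. Since your primary route is complete, this does not affect the proof.
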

\begin{proof}[Proof of Lemma \ref{lm:Feller}]
Fix $0\le t_0\le t\le T$. To prove the continuity of $P_{t_0,t}$, it suffices to prove that $\xi_n \to \xi$ in $H$ implies $U(t, t_0, \xi_n) \to U(t,t_0,\xi)$ in probability in $H$, that is for every $0\le t_0 \le t \le T$ and every $\ve >0$,
\begin{align}
\label{eq:Feller_aux1}
\PP (\|U(t, t_0, \xi_n) - U(t, t_0, \xi)\|_H > \ve) \to 0, \quad n\to \infty.
\end{align}
Indeed, \eqref{eq:Feller_aux1} implies that for arbitrary $\varphi \in C_b(H_\infty)$, $\varphi(U(t, t_0, \xi_n)) \to \varphi(U(t,t_0,\xi))$ in probability. Then, since $\varphi(U(t, t_0, \xi_n))\le \|\varphi\|_{L^\infty(H)}$ is bounded, the sequence $\varphi(U(t, t_0, \xi_n))$ is uniformly integrable and thus we have convergence in $L^1(\Omega)$:
\begin{align*}
\E\big[|\varphi(U(t, t_0, \xi_n)) - \varphi(U(t, t_0, \xi))|\big] \to 0, \quad n \to \infty,
\end{align*}
which yields $P_{t_0,t} \varphi(\xi_n) \to P_{t_0,t}\varphi(\xi)$ as $n \to \infty$. The rest of the section is devoted to the proof of \eqref{eq:Feller_aux1}.  

Let $U_1(t,t_0,\xi_1) =(v_1, w_1)$ and $U_2(t,t_0,\xi_2)=(v_2, w_2)$ be two variational solutions of \eqref{eq:stochastic_kavian} with initial data at $t=t_0$, equal to $\xi_1=(v_{0,1}, w_{0,1})$ and $\xi_2=(v_{0,2}, w_{0,2})$, respectively, driven by the same Wiener process. Denote 
\begin{align*}
\delta v := v_1-v_2, \quad 
\delta w := w_1-w_2.
\end{align*}
Then, for $0\le t_0\le t\le T$, we have the following equations for $\delta v, \delta w$:
\begin{align*}
\begin{aligned}
c_m \dd(\delta v) &= -\big((\mathcal{A}+S_0{\rm Id}) \delta v + S_1 v_1 w_1- S_1 v_2 w_2\big)\, \dd t + \big(b(v_1)-b(v_2)\big)\, \dd W_t &\quad& \text{on}\,\, (0,T]\times \Gamma,\\
\dd (\delta w) &= f(v_1,w_1)-f(v_2, w_2)\, \dd t&\quad& \text{on}\,\, (0,T]\times \Gamma, \\
\quad
v_i(t_0,x)&=v_{0,i}(x), \quad w_i(t_0,x)=w_{0,i}(x)&\quad& \text{on}\,\, \Gamma, \quad i=1,2.
\end{aligned}
\end{align*}
By the Itô formula, for $t\in [0,T]$,
\begin{align}
\label{eq:stochastic_kavian_difference_Ito}
\begin{aligned}
\|\delta v\|_{L^2(\Gamma)}^2 =& \| v_{0,1}-v_{0,2}\|_{L^2(\Gamma)}^2 -\frac{2}{c_m} \int_{t_0}^t \langle (\mathcal{A}+S_0{\rm Id}) \delta v, \delta v\rangle_{H^{-1/2}(\Gamma), H^{1/2}(\Gamma)}\, \dd s\\ 
&-\frac{2S_1}{c_m}\int_{t_0}^t \big(w_1 \delta v\, , \, \delta v\big)_{L^2(\Gamma)}\, \dd s 
- \frac{2S_1}{c_m}\int_{t_0}^t \big(v_2\, \delta w\,,\, \delta v\big)_{L^2(\Gamma)}\, \dd s\\
&+ \frac{1}{c_m^2}\int_{t_0}^t \|b(v_1)-b(v_2)\|_{L_2^0}^2\, \dd s
+ M_t,\\
\|\delta w\|_{L^2(\Gamma)}^2 =& 
\| w_{0,1}-w_{0,2}\|_{L^2(\Gamma)}^2
+ 2 \int_{t_0}^t \big(f(v_1,w_1)-f(v_2, w_2) \,,\, \delta w\big)_{L^2(\Gamma)}
\, \dd s,
\end{aligned}
\end{align}
where $M_t$ is defined by
\begin{align*}
    M_t=\frac{2}{c_m}\int_{t_0}^t \big(\delta v\, , \, (b(v_1)-b(v_2))\, \dd W_s\big)_{L^2(\Gamma)}.
\end{align*}
$M_t$ is a martingale since $v_1, v_2$ are progressively measurable and 
\begin{align*}
&\int_{t_0}^t \big(\delta v\, , \, (b(v_1)-b(v_2))\sqrt{\gamma_k}e_k)\big)_{L^2(\Gamma)}^2\, \dd s \le 
\int_{t_0}^t \|\delta v\|_{L^2(\Gamma)}^2
\|b(v_1)-b(v_2)\|_{L_2^0}^2\|Q^{1/2} e_k\|_{E_0}^2\, \dd s\\
&
\le (t-t_0)\sup_{s\in [t_0, t]}\|\delta v\|_{L^2(\Gamma)}^4\le C(T). 
\end{align*}
By Lemma \ref{lm:L}, boundedness of $w_i$, $i=1,2$, the Lipschitz property of $b$ \ref{B1}, and the Lipschitz continuity of $f$, we have
\begin{align*}
\begin{aligned}
\|\delta v\|_{L^2(\Gamma)}^2 \le & \| v_{0,1}-v_{0,2}\|_{L^2(\Gamma)}^2 -\frac{2\Lambda}{c_m} \int_{t_0}^t \|\delta v\|_{H^{1/2}(\Gamma)}^2\, \dd s\\ 
&-\frac{2S_0}{c_m}\int_{t_0}^t \|\delta v\|_{L^2(\Gamma)}^2\, \dd s 
- \frac{2S_1}{c_m}\int_{t_0}^t \big(v_2\, \delta w\,,\, \delta v\big)_{L^2(\Gamma)}\, \dd s\\
&+ \frac{L_{\rm b}}{c_m^2}\int_{t_0}^t \|\delta v\|_{L^2(\Gamma)}^2\, \dd s
+ M_t,\\
\|\delta w\|_{L^2(\Gamma)}^2 \le& 
\| w_{0,1}-w_{0,2}\|_{L^2(\Gamma)}^2
+ 2C \int_{t_0}^t \big(\|\delta v\|_{L^2(\Gamma)}^2 + \|\delta w\|_{L^2(\Gamma)}^2 \big)
\, \dd s,
\end{aligned}
\end{align*}
The problematic term in \eqref{eq:stochastic_kavian_difference_Ito} is the one containing a product of three functions $\int_{t_0}^t \big(v_2\, \delta w\,,\, \delta v\big)_{L^2(\Gamma)}\, \dd s$. The key issue here a growing factor $\|v_2\|_{H^1(\Gamma)}^2$ in front of $\|\delta w\|_{L^2(\Gamma)}^2$, which makes it impossible to use the Grönwall inequality directly. Indeed, by the Hölder inequality $$\int_\Gamma |a\, b\, c|\, dx \le \|a\|_{L^2(\Gamma)} \|b\|_{L^4(\Gamma)} \|c\|_{L^4(\Gamma)},$$ together with the continuous embedding $H^{1/2}(\Gamma) \hookrightarrow L^4(\Gamma)$ for a two-dimensional surface $\Gamma$ in $\mathbb{R}^3$, the Young inequality with a parameter yields
\begin{align*}
\Big|\int_{t_0}^t \big(v_2\, \delta w\,,\, \delta v\big)_{L^2(\Gamma)}\, \dd s\Big|
\le
\frac{\gamma}{2}\int_{t_0}^t\|\delta v\|_{H^{1/2}(\Gamma)}^2\, \dd s
+ \frac{1}{2\gamma} \int_{t_0}^t\|v_2\|_{H^{1/2}(\Gamma)}^2
\|\delta w\|_{L^2(\Gamma)}^2\, \dd s.
\end{align*}
Choosing $\gamma>0$ such that $-2\Lambda + S_1 \gamma <0$, we have, for a generic constant $C$,
\begin{align}
\label{eq:stochastic_kavian_difference_Ito-3}
\begin{aligned}
\|\delta v\|_{L^2(\Gamma)}^2 \le & \| v_{0,1}-v_{0,2}\|_{L^2(\Gamma)}^2 
+C\int_{t_0}^t \|\delta v\|_{L^2(\Gamma)}^2\, \dd s \\
&+ C\int_{t_0}^t\|v_2\|_{H^{1/2}(\Gamma)}^2
\|\delta w\|_{L^2(\Gamma)}^2\, \dd s
+ M_t,\\
\|\delta w\|_{L^2(\Gamma)}^2 \le& 
\| w_{0,1}-w_{0,2}\|_{L^2(\Gamma)}^2
+ 2C \int_{t_0}^t \big(\|\delta v\|_{L^2(\Gamma)}^2 + \|\delta w\|_{L^2(\Gamma)}^2 \big)
\, \dd s.
\end{aligned}
\end{align}
The key feature of the first inequality in \eqref{eq:stochastic_kavian_difference_Ito-3} is that $\|v_2\|_{H^{1/2}(\Gamma)}^2$ is multiplied by $\|\delta w\|_{L^2(\Gamma)}^2$ and not $\|\delta v\|_{L^2(\Gamma)}^2 + \|\delta w\|_{L^2(\Gamma)}^2$. Combining the two inequalities in \eqref{eq:stochastic_kavian_difference_Ito-3} yields
\begin{align*}
\int_{t_0}^t\|v_2\|_{H^{1/2}(\Gamma)}^2
\|\delta w\|_{L^2(\Gamma)}^2\, \dd s
\le&
\| w_{0,1}-w_{0,2}\|_{L^2(\Gamma)}^2 \int_{t_0}^t\|v_2(s)\|_{H^{1/2}(\Gamma)}^2
\, \dd s\\
&+
2C\int_{t_0}^t\|v_2(s)\|_{H^{1/2}(\Gamma)}^2 \int_{t_0}^s
\big(\|\delta v(r)\|_{L^2(\Gamma)}^2
+ \|\delta w(r)\|_{L^2(\Gamma)}^2\big)\, \dd r\, \dd s\\
& \le
\| w_{0,1}-w_{0,2}\|_{L^2(\Gamma)}^2 \int_{t_0}^t\|v_2(s)\|_{H^{1/2}(\Gamma)}^2
\, \dd s\\
&+
2C\int_{t_0}^t\|v_2(s)\|_{H^{1/2}(\Gamma)}^2\, \dd s \int_{t_0}^t
\big(\|\delta v(s)\|_{L^2(\Gamma)}^2
+ \|\delta w(s)\|_{L^2(\Gamma)}^2\big)\, \dd s.
\end{align*}
Denoting
\begin{align*}
\rho(t):= \int_{t_0}^t\|v_2(s)\|_{H^{1/2}(\Gamma)}^2\, \dd s,
\end{align*}
adding up the two inequalities in \eqref{eq:stochastic_kavian_difference_Ito-3}, we obtain, $\PP$-a.s.
\begin{align}
\label{eq:stochastic_kavian_difference_Ito-4}
\begin{aligned}
\|\delta v\|_{L^2(\Gamma)}^2 
+ \|\delta w\|_{L^2(\Gamma)}^2
\le & 
 C (1 + \rho(t)) \big(\| v_{0,1}-v_{0,2}\|_{L^2(\Gamma)}^2 
+ \| w_{0,1}-w_{0,2}\|_{L^2(\Gamma)}^2\big)\\
&+C(1 + \rho(t))\int_{t_0}^t \big( \|\delta v\|_{L^2(\Gamma)}^2+\|\delta w\|_{L^2(\Gamma)}^2\big)\, \dd s + M_t, \,\, t\in [t_0,T].
\end{aligned}
\end{align}
Since $\rho(t)$ is random, we employ a localization argument. Let us introduce a stopping time
\begin{align}
    \label{eq:tau_R_Feller}
    \tau_R := \inf\{t\ge t_0:\,\,
    \int_{t_0}^t\|v_2(s)\|_{H^{1/2}(\Gamma)}^2\, \dd s >R\} \land T, \quad R>0,
\end{align}
and denote for brevity
\begin{align*}
Y(t):= \|U(t,t_0,\xi_1)-U(t,t_0,\xi_2)\|_H^2 = \|\delta v(t)\|_{L^2(\Gamma)}^2 
+ \|\delta w(t)\|_{L^2(\Gamma)}^2.
\end{align*}
Then \eqref{eq:stochastic_kavian_difference_Ito-4} yields
\begin{align*}
\begin{aligned}
Y(\tau_R \land t)
\le 
 C (1 + R) Y({t_0})+C(1 + R)\int_{t_0}^{\tau_R\land t} Y(s)\, \dd s + M_{\tau_R \land t}, \,\, t\in [t_0,T].
\end{aligned}
\end{align*}
Taking the expectation on both sides of the last inequality and using the fact that $M_{\tau_R \land t}$ is a martingale, gives
\begin{align*}
\begin{aligned}
\E \big[Y(\tau_R \land t)\big]
\le 
 C (1 + R) Y({t_0})+C(1 + R)\int_{t_0}^t \E \big[\mathbbm{1}_{[{t_0}, \tau_R]}Y(s)\big]\, \dd s, \,\, t\in [{t_0},T].
\end{aligned}
\end{align*}
Taking into account
\begin{align*}
    \mathbbm{1}_{[{t_0}, \tau_R]}Y(s)
    = \mathbbm{1}_{[{t_0}, \tau_R]}Y(\tau_R \land s)
    \le Y(\tau_R \land s),
\end{align*}
and using the Grönwall inequality yields
\begin{align*}
\begin{aligned}
\E \big[Y(\tau_R \land t)\big]
\le 
 C(T, R) \, Y({t_0}), \,\, t\in [{t_0},T],
\end{aligned}
\end{align*}
or equivalently
\begin{align}
\label{eq:est_delta_U_tau_R}
\E \big[\|U(\tau_R \land t,{t_0},\xi_1)-U(\tau_R \land t,{t_0},\xi_2)\|_H^2\big]
\le 
 C(T, R) \, \|\xi_1-\xi_2\|_H^2, \,\, t\in [{t_0},T].
\end{align}
Now we turn to the localization argument. For an arbitrary $\ve >0$ and a fixed $t\in [{t_0},T]$, we have
\begin{align}
\label{eq:localization_Feller}
\begin{alignedat}{1}
&\PP \Big(\|U(t,{t_0},\xi_1)-U(t,{t_0},\xi_2)\|_H >\ve\Big)= \PP \Big(\|U(t,{t_0},\xi_1)-U(t,{t_0},\xi_2)\|_H^2 >\ve^2\Big)\\
&=
\PP \Big(\|U(t,{t_0},\xi_1)-U(t,{t_0},\xi_2)\|_H^2 >\ve^2 \,, \, \tau_R<t \Big)
+ \PP \Big(\|U(t,{t_0},\xi_1)-U(t,{t_0},\xi_2)\|_H^2 >\ve^2 \,, \, \tau_R\ge t \Big)\\
&\le \PP \Big(\tau_R<t \Big)
+ \PP \Big(\|U(t,{t_0},\xi_1)-U(t,{t_0},\xi_2)\|_H^2 >\ve^2 \,, \, \tau_R\ge t \Big).
\end{alignedat}
\end{align}
By the definition of the stopping time \eqref{eq:tau_R_Feller}, the Markov inequality, and the second estimate in \eqref{eq:better-estimate-v_H^1/2}, we have
\begin{align*}
\begin{alignedat}{1}
\PP\big(\tau_R <T\big)=&
\PP\big(\int_{t_0}^T \|v_2(s)\|_{H^{1/2}(\Gamma)}^2\, \dd s \, > \, R\big)\\
&\le \frac{1}{R} \E \Big[\int_{t_0}^T \|v_2(s)\|_{H^{1/2}(\Gamma)}^2\, \dd s\Big] \le \frac{C(T)}{R} \to 0, \quad R\to \infty.
\end{alignedat}
\end{align*}
By the Markov inequality and \eqref{eq:est_delta_U_tau_R},
\begin{align*}
\begin{alignedat}{1}
&\PP \Big(\|U(t,{t_0},\xi_1)-U(t,{t_0},\xi_2)\|_H^2 >\ve^2 \,, \, \tau_R\ge t \Big)\le
\frac{1}{\ve^2} \E\Big[\mathbbm{1}_{\{\tau_R \ge  t\}}\|U(t,{t_0},\xi_1)-U(t,{t_0},\xi_2)\|_H^2\Big]\\
&
\le \frac{C(T,R)}{\ve^2} \|\xi_1-\xi_2\|_H^2.
\end{alignedat}
\end{align*}
Choosing $\xi_1=\xi_n$ and $\xi_2=\xi$ such that $\|\xi_n - \xi\|_H \to 0$ as $n\to \infty$, combining \eqref{eq:localization_Feller}--\eqref{eq:est_delta_U_tau_R}, and passing to the limit first as $n\to \infty$ and then as $R\to \infty$, we obtain
\begin{align*}
\begin{alignedat}{1}
&\limsup_{n\to \infty}\PP \Big(\|U(t,{t_0},\xi_n)-U(t,{t_0},\xi)\|_H >\ve\Big)\le
\lim_{R\to \infty}\frac{C(T)}{R}=0.
\end{alignedat}
\end{align*}
Since the probability is nonnegative, we finally obtain
\begin{align*}
&\lim_{n\to \infty}\PP \Big(\|U(t,t_0,\xi_n)-U(t,{t_0},\xi)\|_H >\ve\Big)=0.
\end{align*}
Lemma \ref{lm:Feller} is proved.
\end{proof}
\subsection{Mild solution}

By definition, $\mathbb{A}(U)=A U + F(U)$, where $A$ given by \eqref{def:A_kavian} does not depend on $t, \omega$.
To prove the existence of a mild solution, we need a higher regularity of the domain and the data.
Assume that $\Gamma$ and $\partial G$ are $C^{2}$, and $g\in C([0,T]; H^{3/2}(\Gamma))$, then $p$ solving \eqref{eq:p} belongs to $C([0,T]; H^2(G_{\rm i}\cup G_{\rm e}))$, and consequently $\sigma_{\rm i} \nabla p \cdot \n \in C([0,T]; L^2(\Gamma))$.
Then the variational solution $U$ given by Theorem \ref{th:existence-(0,T)_kavian} is also an analytically weak solution (see Definition G.0.3 in \cite{liu2015stochastic}). Recall that an $H$-valued predictable process $U(t)$, $t\in [0,T]$, is called an analytically weak solution of \eqref{eq:stochastic_kavian} if 
\begin{align}
\label{eq:analytically_weak_sol_kavian}
\begin{alignedat}{2}
(U(t), \varphi)_H&=(U_0, \varphi)_H + \int_0^t (U(s), A^\ast \varphi)_H\, \dd s + 
\int_0^t (F(U(s)), \varphi)_H\, \dd s \\
&+\int_0^t (P(s), \varphi)_H\, \dd s
+\int_0^t (\varphi, B(U(s))\, \dd W_s)_H,
\end{alignedat}
\end{align}
for any $\varphi \in D(A^\ast)$, $t\in [0,T]$. Here $A^\ast$ is the adjoint of $A$ on $L^2(\Gamma)$, and the integrals in \eqref{eq:analytically_weak_sol_kavian} are well-defined due to Theorem \ref{th:existence-(0,T)_kavian}.

In order to prove that the weak solution is also mild, we need to introduce a semigroup. Let us consider the Dirichlet to Neumann operator  $\mathcal{A}:D(\mathcal{A}) \subset L^2(\Gamma) \to L^2(\Gamma)$, an unbounded symmetric operator with dense domain $D(\mathcal{A})=\{v\in H^{1/2}(\Gamma):\,\, \mathcal{A}v \in L^2(\Gamma)\}$. It is monotone $(\mathcal{A} v, v) \ge 0$, $v\in D(\mathcal{A})$. Let us show that $\mathcal{A}$ is $m$-accretive. To this end, we prove that for every $\lambda>0$, $({\rm Id} + \lambda \mathcal{A})$ is bijective from $D(\mathcal{A})$ onto $L^2(\Gamma)$ with $\|({\rm Id} + \lambda \mathcal{A})^{-1}\|_{L(L^2(\Gamma))} \le 1$. Indeed, $({\rm Id} + \lambda \mathcal{A})$ is injective since $-\mathcal{A}$ is non-negative (see Lemma \ref{lm:L}). To prove that $({\rm Id} + \lambda \mathcal{A})$ is surjective, we fix $f\in L^2(\Gamma)$ and find $v \in D(A)$ such that $({\rm Id} + \lambda \mathcal{A}) v = f$. The weak formulation reads 
\begin{align*}
\lambda \int_{G_{\rm i} \cup G_{\rm e}} \sigma \nabla u^{(v)} \cdot \nabla \phi\, \dd x + \int_\Gamma v \, [\phi]\, \dd S = \int_\Gamma f\, \phi\, \dd S, \quad \phi\in H^{1/2}(\Gamma),
\end{align*}
where $u^{(v)}$ solves \eqref{eq:micro_deterministic_on_Gamma}. Clearly, for every $f\in L^2(\Gamma)$, there exists a solution $v\in H^{1/2}(\Gamma)$. Further, $\mathcal{A} v = \frac{1}{\lambda}(f-v) \in L^2(\Gamma)$, that implies $v\in D(\mathcal{A})$. Since for $v\in D(\mathcal{A})$, $\|v\|_{L^2(\Gamma)} \le \|f\|_{L^2(\Gamma)}$, we have also $\|({\rm Id} + \lambda \mathcal{A})^{-1}\|_{L(L^2(\Gamma))} \le 1$. Thus, by the Hille-Yosida theorem (Theorem 7.8 in \cite{brezis2011functional}) $-\mathcal{A}$ generates a $C_0$-semigroup of contractions on $L^2(\Gamma)$ which we denote $S(t)=e^{-t\mathcal{A}}$. 

{In addition, since the resolvent $({\rm Id} + \lambda \mathcal{A})^{-1}:L^2(\Gamma) \to H^{1/2}(\Gamma)$ is bounded and the embedding $H^{1/2}(\Gamma) \hookrightarrow L^2(\Gamma)$ is compact, the resolvent is compact on $L^2(\Gamma)$. This implies the compactness of the semigroup $S(t)$ (see Corollary 3.5).} 

Consider now the linear operator $A:D(A)\subset H \to H$ given by \eqref{def:A_kavian}, with $D(A) = D(\mathcal{A}) \times L^2(\Gamma)$. Then $A$ generates a semigroup on $H$:
\begin{align*}
\mathbb{S}(t) = \begin{pmatrix}
    e^{-\frac{S_0}{c_m} t} S(t) & 0\\ 0& 0
\end{pmatrix}.
\end{align*}
Let us recall the definition of a mild solution of \eqref{eq:stochastic_kavian}:

\begin{definition}
An $H$-valued predictable process $U(t)$, $t\in[0,T]$, is called a mild solution of \eqref{eq:orig-prob-vector_kavian} if it takes values in $B(U)$, $\int_0^T \|U\|_H\, \dd s <\infty$, $\PP$-a.s., and for all $t\in[0,T]$ it holds
\begin{align*}
U(t)=\mathbb{S}(t)U_0 + \int_0^t \mathbb{S}(t-s) \big(F(U(s))+ P(s)\big)\, \dd s+
\int_0^t \mathbb{S}(t-s) B(U(s))\, \dd W_s, \quad \PP-\text{a.s.}.
\end{align*}
\end{definition}

Due to the Lipschitz continuity assumption \ref{B1} and the estimates obtained in Theorem \ref{th:existence-(0,T)_kavian}, the variational solution in Theorem \ref{th:existence-(0,T)_kavian} is also mild (see Theorem 6.7 in \cite{da2014stochastic} or Proposition G.0.5 in \cite{liu2015stochastic}).

\section{Existence of invariant measure and regularity of its support}
\label{sec:existence_inv_measure}

In this section, we consider a special case of a constant in time boundary data $g=g(x)$ and prove the existence of an invariant measure and its properties for \eqref{eq:orig-prob-vector_kavian}. In this case, system \eqref{eq:orig-prob-vector_kavian} is autonomous, and the transition operators are $P_{s,t}\varphi(\xi)=P_{0,t-s}\varphi(\xi)$. Throughout this section we will use the notation $P_t$ for the transition semigroup generated by the solution $U(t, 0, \xi)$ of \eqref{eq:orig-prob-vector_kavian} on $H_\infty$.

\paragraph{Existence of invariant measure}
We start by recalling necessary definitions. Given a Markovian transition operator $P_t$ \eqref{P_t} it can be written in terms of the corresponding transition function $P_t(x, \cdot)$:
\begin{align*}
P_t \varphi(x) = \int_{H_\infty} \varphi(x)\, P_t(x, \dd y), \quad t\ge 0, \,\, x\in H_\infty.
\end{align*}
We introduce a dual semigroup $P_t^\ast$ acting on probability measures $\mathcal{M}_1(H_\infty)$  by setting
\begin{align*}
P_t^\ast \nu (A) = \int_{H_\infty} P_t(x, A)\, \nu(\dd x), \quad \nu \in \mathcal{M}_1(H_\infty), \, \, A\in \mathcal{B}(H_\infty). 
\end{align*}
\begin{definition}[Invariant measure]
Let $P_t$, $t\ge 0$, be a Markov semigroup on $H_\infty$. A probability measure $\mu$ on $H_\infty$ is called invariant if for any $\varphi\in C_b(H_\infty)$ it holds
\begin{align}
\label{def:inv_meas_autonom}
\int_{H_\infty} P_t \varphi(x) \, \mu(\dd x)
= \int_{H_\infty} \varphi(x)\, \mu(\dd x), \quad 
t\ge 0.
\end{align}
\end{definition}
In terms of the dual semigroup, the condition \eqref{def:inv_meas_autonom} is equivalent to $P_t^\ast \mu = \mu$, for any $t\ge 0$. Next, we define the time average of the probabilities for $U(t, 0, \xi)$ starting in $\xi$  being in $A$ at time $t$:
\begin{align*}
R_T(x, A) = \frac{1}{T} \int_0^T P_t(x,A)\, \dd t, \quad x\in {H_\infty}, \,\, A\in \mathcal{B}({H_\infty}).
\end{align*}
Note that for each $x\in {H_\infty}$ and $T\ge 0$, $R_T(x, \cdot)$ is a probability measure on ${H_\infty}$ since $P_t$ is stochastically continuous. 
For $\nu\in \mathcal{M}_1({H_\infty})$ and $A \in \mathcal{B}({H_\infty})$ we define the dual $R_T^\ast$ acting on $\mathcal{M}_1(H)$ by
\begin{align}
\label{def:R_T^ast}
R_T^\ast \nu(A) = \frac{1}{T} \int_0^T P_t^\ast \nu (A)\, \dd t.
\end{align}
\begin{definition}[Tight sequence of measures]
    A family of measures $(\mu_n)_{n\ge 1}$ on ${H_\infty}$ is tight if for any $\ve >0$, there exists a compact $K_\ve \Subset {H_\infty}$ such that
    $\mu_n(K_\ve) \ge 1-\ve$.
    Equivalently, $\sup_n \mu_n (K_\ve^c) <\ve$.
\end{definition}
We will prove the existence of an invariant measure by using the Krylov-Bogoliubov theorem (see Theorem 3.1.1 in \cite{da1996ergodicity}) which we formulate below for the reader's convenience. 
\begin{theorem}[Krylov-Bogoliubov theorem]
Let $P_t$, $t\ge 0$, be a Markovian Feller semigroup which is stochastically continuous. If for some $\xi \in {H_\infty}$, some $\nu\in \mathcal{M}_1({H_\infty})$ and some sequence $T_n \to +\infty$, the sequence $R_{T_n}^\ast \nu$ is tight, then there exists an invariant measure for $P_t$, $t\ge 0$.
\end{theorem}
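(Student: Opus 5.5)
The plan is the classical averaging argument. Tightness gives a weakly convergent subsequence of the averaged measures. The Feller property lets us pass to the limit inside the invariance identity. The time averaging makes the defect in that identity vanish as $T_n\to\infty$. Since $H_\infty$ is a Polish space (as noted after \eqref{def:H_infty}), Prokhorov's theorem applies. If the hypothesis is stated for a point $\xi$, we simply take $\nu=\delta_\xi$, so it suffices to treat a general $\nu\in\mathcal M_1(H_\infty)$.

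\emph{Step 1 (well-posedness of $R_T^\ast$).} First I will check that $R_T^\ast\nu$ in \eqref{def:R_T^ast} is a genuine probability measure and that Fubini can be applied. Stochastic continuity of $P_t$ implies that for every $\varphi\in C_b(H_\infty)$ and $x\in H_\infty$ the map $t\mapsto P_t\varphi(x)$ is continuous. Hence $(t,x)\mapsto P_t\varphi(x)$ is jointly measurable; it is continuous in $t$ and Borel in $x$, so a Carathéodory-type argument applies. A monotone class argument then extends this to indicators $\mathbbm 1_A$. This gives
\[
\int_{H_\infty}\varphi\,\dd (R_T^\ast\nu)=\frac1T\int_0^T\int_{H_\infty}P_t\varphi(x)\,\nu(\dd x)\,\dd t .
\]
I expect this measurability bookkeeping to be the only mildly technical point; the rest is soft.

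\emph{Step 2 (extraction of a limit).} By the tightness assumption and Prokhorov's theorem, there is a subsequence, still denoted $T_n$, and a measure $\mu\in\mathcal M_1(H_\infty)$ such that $R_{T_n}^\ast\nu\to\mu$ weakly. That is, $\int\varphi\,\dd R_{T_n}^\ast\nu\to\int\varphi\,\dd\mu$ for every $\varphi\in C_b(H_\infty)$.

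\emph{Step 3 (invariance).} Fix $t>0$ and $\varphi\in C_b(H_\infty)$. By the Feller property, $P_t\varphi\in C_b(H_\infty)$, so weak convergence gives $\int P_t\varphi\,\dd R^\ast_{T_n}\nu\to\int P_t\varphi\,\dd\mu$. On the other hand, the semigroup property $P_sP_t=P_{s+t}$ (a consequence of the Markov property, as shown earlier) together with Step 1 yields
\[
\int P_t\varphi\,\dd R^\ast_{T}\nu-\int\varphi\,\dd R^\ast_{T}\nu=\frac1T\Big(\int_T^{T+t}-\int_0^t\Big)\int P_s\varphi\,\dd\nu\,\dd s ,
\]
whose absolute value is at most $2t\|\varphi\|_\infty/T\to0$ along $T=T_n$. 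Passing to the limit gives $\int P_t\varphi\,\dd\mu=\int\varphi\,\dd\mu$ for all $\varphi\in C_b(H_\infty)$ and $t\ge0$, which is exactly \eqref{def:inv_meas_autonom}. Hence $\mu$ is invariant.
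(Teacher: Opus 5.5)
The paper gives no proof of this statement; it quotes it from Da Prato--Zabczyk (Theorem 3.1.1 in \cite{da1996ergodicity}). Your argument is exactly the classical proof given there: Prokhorov extraction of a weak limit $\mu$, the Feller property to pass to the limit in $\int P_t\varphi\,\dd R^\ast_{T_n}\nu$, and the averaging bound $2t\|\varphi\|_\infty/T_n$. It is correct.

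The only point needing a line of justification is the continuity of $t\mapsto P_t\varphi(x)$ in Step 1. Right-continuity already suffices for the joint measurability you need, and it follows at once from $P_{t+h}\varphi(x)=\int_{H_\infty} P_h\varphi(y)\,P_t(x,\dd y)$ and dominated convergence. In the present setting, full continuity is also immediate from the continuous $H$-paths of $U$.
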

Now we turn to the proof of Theorem \ref{th:existence_inv_measure}.
\begin{proof}[Proof of Theorem \ref{th:existence_inv_measure}]
Let us define 
\begin{align*}
K_R = \{(v, w)\in H^{1/2}(\Gamma) \times H^{1/2}(\Gamma;[0,1]): \quad \|v\|_{H^{1/2}(\Gamma)}^2 + \|w\|_{H^{1/2}(\Gamma)}^2 \le R\}.
\end{align*}
The set $K_R\subset H_\infty$ is compact since $V\hookrightarrow H$ is compact and $K_R$ is closed in $H_\infty$ under $L^2$-limits. 
We choose $\nu=\delta_\xi$ with $\xi\in L^2(\Gamma) \times H^{1/2}(\Gamma;[0,1])$ and we will estimate $R_T^\ast \delta_\xi (K_R^c)$. By definition \eqref{def:R_T^ast},
we have
\begin{align*}
R_T^\ast \delta_\xi (K_R^c)
=& R_T(\xi, K_R^c) 
= \frac{1}{T} \int_0^T P_t(\xi, K_R^c)\, \dd t
= \frac{1}{T} \int_0^T \PP \big(\|v\|_{H^{1/2}(\Gamma)}^2 + \|w\|_{H^{1/2}(\Gamma)}^2 > R\big)\, \dd t.
\end{align*}
By the Markov inequality,
\begin{align*}
R_T^\ast \delta_\xi (K_R^c)
\le &
\frac{1}{R} \frac{1}{T} \int_0^T \big(\E \big[\|v\|_{H^{1/2}(\Gamma)}^2\big] + \E \big[\|w\|_{H^{1/2}(\Gamma)}^2\big]\big)\, \dd t.
\end{align*}
By the estimates in Lemma \ref{lm:uniform_time_estimate},
\begin{align*}
R_T^\ast \delta_\xi (K_R^c)
\le &
\frac{C}{R}\Big(1+\frac{1}{T}\Big),
\end{align*}
where the constant $C$ depends on $\|v_0\|_{L^2(\Gamma)}$, $\|w_0\|_{H^{1/2}(\Gamma)}$, $\|g\|_{H^{1/2}(\partial G)}$, $\|b(0)\|_{L_2^0}$, and is independent of $T$ and $R$.
For $T\ge 1$,
\begin{align*}
\sup_{T\ge 1} R_T^\ast \delta_\xi (K_R^c)
\le &
\frac{C}{R},
\end{align*}
which can be made arbitrary small as $R\to +\infty$. Thus, $R_T^\ast \delta_\xi$ is tight in $H$, and up to a subsequence, $R_T^\ast \delta_\xi$ converges to some $\mu \in \mathcal{M}_1({H_\infty})$. The existence of invariant measure is thus given by the Krylov-Bogoliubov theorem.

\paragraph{Regularity of the support}
We are now in the position to prove \eqref{eq:support} and show first that any invariant measure $\mu\in \mathcal{M}_1({H_\infty})$ is supported on ${V_\infty}$, and then conclude that the support is in fact $Z_\infty$. 
The idea of the proof is inspired by the works \cite{glatt2014existence} and \cite{kapustyan2022strong}, in which the authors derived similar results for the invariant measures emerging in stochastic primitive equations and bidomain equations. For any $R>0$ and $U \in H_\infty$ denote \begin{align*}
f_R(U): =\begin{cases}
\min (|U\|_{V}^2, R), \quad U\in {V_\infty}, \\ 
R, \quad {U \in H_\infty\setminus V_\infty}. 
\end{cases}
\end{align*}
Note that $f_R$ is Borel measurable on {$H_\infty$}. By definition of invariant measure,
\begin{align*}
\int_{{H_\infty}} f_R(U_0) \mu(\dd U_0) =
\int_{H_\infty} P_t f_R(U_0) \mu(\dd U_0).
\end{align*}
By taking the time average on both sides, for any $T>0$, we get
\begin{align*}
\int_{{H_\infty}} f_R(U_0) \mu(\dd U_0) =
\frac{1}{T} \int_{0}^T \int_{{H_\infty}}  P_t f_R(U_0) \mu(\dd U_0) \dd t.
\end{align*}
For any $\rho \geq 1$, denote \begin{align*}
B_\rho^H = \{U_0 \in {H_\infty}: \,\, \|U_0\|_H \le \rho\}.
\end{align*}
Then,  estimates \eqref{eq:better-estimate-v_H^1/2}, \eqref{eq:better-estimate-w_L^2} in Lemma \ref{lm:uniform_time_estimate} yield
\begin{align}\label{inv2}
\begin{alignedat}{1}
\int_{{H_\infty}} f_R(U_0) \mu(\dd U_0)
&=
\frac{1}{T} \int_{0}^T \int_{B_\rho^H}  P_t f_R(U_0) \mu(\dd U_0) \dd t
+ 
\frac{1}{T} \int_{0}^T \int_{{H_\infty}\setminus B_\rho^H}  P_t f_R(U_0) \mu(\dd U_0) \dd t
\\
&\le 
\int_{B_\rho^H} \frac{1}{T} \int_{0}^{T} 
\E \|U(t,0, U_0)\|_V^2 \, \dd t \, \mu(\dd U_0)
+ R\, \mu({H_\infty}\setminus B_\rho^H)
\\
&
\leq C\int_{B_\rho^H} (1 + \frac{1}{T}\|U_0\|_{H}^2) \,\mu(\dd U_0) +R\, \mu({H_\infty}\setminus B_\rho^H)\\ 
&\leq C \left(1 + \frac{\rho^2}{T}\right)\mu(B_\rho^H) +R\, \mu({H_\infty}\setminus B_\rho^H).
\end{alignedat}
\end{align}
The constant $C$ above depends on $\|b(0)\|_{L_2^0}, \|g\|_{H^{1/2}(\partial G)}$.
First, we choose a sufficiently large $\rho=\rho(R)$ such that
$R\mu(H \setminus B_\rho^H) \leq 1$.
Then for such $\rho$ we choose $T$ large enough so that
$\rho^2/T \le 1$.
Altogether, \eqref{inv2} yields an estimate with $C$ independent of $R$:
\begin{equation}\label{bndR}
\int_{H_\infty} f_R(U_0) \mu(\dd U_0) \leq C.
\end{equation}
Since 
\begin{align*}
f_R(U_0) \to \|U_0\|_{V,\ast}^2=\begin{cases}
\|U_0\|_{V}^2, \quad U_0 \in { V_\infty},\\
\infty, \quad U_0 \in {H_\infty \setminus V_\infty},
\end{cases}
\end{align*}
applying the Monotone Convergence Theorem to \eqref{bndR} we get
\[
\int_{{H_\infty}}\|U_0\|^2_{V,\ast} \mu(\dd U_0) < \infty.
\]
Thus, $\mu({H_\infty \setminus V_\infty})=0$, and
\begin{align}
\label{eq:supp_mu_in_V}
\int_{{H_\infty}}\|U_0\|^2_{V} \mu(\dd U_0) < \infty.
\end{align}
In particular, $\mu({V_\infty})=1$, that is $\mu$ is supported on ${V_\infty}$.

The last step of the proof is to show that any invariant measure $\mu$ is in fact supported on $Z_\infty = H^{1/2}(\Gamma) \times H^{1/2}(\Gamma; [0,1])$. To this end, we will exploit the fact that two solutions $w_1, w_2$ of the equation $\partial_t w = f(v,w)$ driven by the same $v$ satisfy the estimate
\begin{align*}
    \|w_1(t)-w_2(t)\|_{L^2(\Gamma)}\le e^{-\kappa (t-s)} \|w_1(s)-w_2(s)\|_{L^2(\Gamma)}.
\end{align*}
In other words, $w$ forgets its initial condition exponentially fast. 

{Take any invariant measure $\mu$ on $H_\infty$. Let $U^\ast(t)$ be the solution of \eqref{eq:orig-prob-vector_kavian} with the initial condition $U^\ast(0)=\xi$, where $\xi$ is a  $H_\infty$-valued, $\F_0$-measurable random variable with the law $\mathcal{L}(\xi)=\mu$. Since $\mu$ is invariant, $\mathcal{L}(U^\ast(t))=P_t^\ast \mu =\mu$ for all $t\ge 0$. Further, if we denote
\begin{align*}
\pi_w: H_\infty \to L^2(\Gamma; [0,1]), \quad \pi_w(v,w)= w, \quad
\mu^w:= \mu \circ \pi_w^{-1}, 
\end{align*}
then $\mathcal{L}(w)= \mu_w$, and for any Borel set $B \subset L^2(\Gamma; [0,1])$, $\mu^w(B)= \mu(L^2(\Gamma)\times B)$, and 
\begin{align}
\label{eq:mu_w_and_mu}
\int_{L^2(\Gamma; [0,1])} \varphi(w)\, \mu_w(\dd w) =
\int_{H_\infty} \varphi(\pi_w(U))\, \mu(\dd U), \quad \varphi\in C_b(L^2(\Gamma; [0,1])).
\end{align}
For $t \ge 0$, let us consider the auxiliary problem
\begin{align*}
    \partial_t w &= f(v^\ast, w) \quad \mbox{on}\,\, (0, \infty) \times \Gamma,\\
    w(0)&=0\quad \mbox{on}\,\, \Gamma.
\end{align*}
Since the initial data is smooth and $v^\ast$ is $H^{1/2}(\Gamma)$-valued, as in the proof of Lemma \ref{lm:uniform_time_estimate}, for any $n\ge 0$, we obtain
\begin{align}
\label{aux_inv_supp_1}
\|w(n)\|_{H^{1/2}(\Gamma)}^2 \le C \int_0^n e^{-\frac{(n-r)}{2\tau_{\rm res}}}(1+ \|v^\ast(r)\|_{H^{1/2}(\Gamma)}^2)\, \dd r.
\end{align}
By stationarity,
\begin{align*}
\E\|v^\ast(t)\|_{H^{1/2}(\Gamma)}^2 =
\E\|v^\ast(0)\|_{H^{1/2}(\Gamma)}^2, \quad t\ge 0.
\end{align*}
Thus, taking the expectation in \eqref{aux_inv_supp_1} yields
\begin{align}
\label{aux_inv_supp_2}
\sup_{n\ge 0} \E \|w(n)\|_{H^{1/2}(\Gamma)}^2 \le C (1+ \E \|v^\ast(0)\|_{H^{1/2}(\Gamma)}^2)\le C.
\end{align}
Next, as in the proof of Lemma \ref{lm:uniform_time_estimate}, we derive
\begin{align}
\label{aux_inv_supp_3}
\|w^\ast(n)-w(n)\|_{L^2(\Gamma)}^2 
\le e^{-\frac{n}{2\tau_{\rm res}}}\|w^\ast(0)-w(0)\|_{L^2(\Gamma)}^2 
\le e^{-\frac{n}{2\tau_{\rm res}}}|\Gamma| \to 0, \quad n\to +\infty,
\end{align}
which implies the strong convergence $w^\ast(n)-w(n) \to 0$ in $L^2(\Omega; L^2(\Gamma))$. 
Let us show that $\mu_n:=\mathcal{L}(w(n))$ converges weakly to $\mathcal{L}(w^\ast)=\mu_w$ in $L^2(\Gamma; [0,1])$, and from there conclude that $\mu_w$ is supported on $H^{1/2}(\Gamma; [0,1])$. 
Define
\begin{align*}
    F(y)= \begin{cases}
        \|y\|_{H^{1/2}(\Gamma)}^2, \quad y\in H^{1/2}(\Gamma; [0,1])\\
        +\infty, \quad y\in L^2(\Gamma; [0,1]) \setminus H^{1/2}(\Gamma; [0,1]).
    \end{cases}
\end{align*}
By \eqref{aux_inv_supp_2},
\begin{align*}
    \sup_n \int_{L^2(\Gamma; [0,1])} F(y)\, \mu_n(\dd y) < \infty.
\end{align*}
%
Let us show that $\mu_n$ converges weakly to $\mu_w$ in $L^2(\Gamma)$. For any $\varphi \in C_b(L^2(\Gamma; [0,1]))$, by \eqref{aux_inv_supp_3}, $\varphi(w(n)) - \varphi(w^\ast(n)) \to 0$ in probability in $L^2(\Gamma)$, and 
thus, by the dominated convergence theorem,
\begin{align*}
\E\big[\varphi(w(n)) - \varphi(w^\ast(n))\big] \to 0, \quad n \to \infty,
\end{align*}
or equivalently
\begin{align*}
\int_{L^2(\Gamma;[0,1])}\varphi(y)\mu_n(\dd y) \to \int_{L^2(\Gamma;[0,1])}\varphi(y)\mu_w(\dd y), \quad \varphi \in C_b(L^2(\Gamma; [0,1])).
\end{align*}
By the Portmanteau lower semicontinuity,
\begin{align*}
    \int_{L^2(\Gamma; [0,1])} F(y)\, \mu_w(\dd y) \le
    \liminf_{n\to \infty} \int_{L^2(\Gamma; [0,1])} F(y)\, \mu_n(\dd y)
    = \liminf_{n\to \infty} \E \|w(n)\|_{H^{1/2}(\Gamma)}^2 < \infty.
\end{align*}
Hence,
\begin{align}
\label{eq:supp_mu_w}
    \int_{L^2(\Gamma; [0,1])} \|y\|_{H^{1/2}(\Gamma)}^2\, \mu_w(\dd y) < \infty.
\end{align}
Combining \eqref{eq:mu_w_and_mu}, \eqref{eq:supp_mu_w}, and \eqref{eq:supp_mu_in_V} yields 
\begin{align*}
    \int_{H_\infty} \|y\|_{Z}^2\, \mu(\dd y) < \infty.
\end{align*}
Since $\mu= \mathcal{L}(U^\ast(t))$ and $\mu(Z_\infty)=1$, we conclude that the stationary solution $U^\ast \in Z_\infty$, $\PP$-a.s..
}
The proof of Theorem \ref{th:existence_inv_measure} is complete.
\end{proof}

\section{Long-time behavior in the case of truncated nonlinearity}
\label{sec:inv_measure_truncated_reaction}

While the existence of invariant measure for \eqref{eq:orig-prob-vector_kavian} can be proved by combining a localization argument with uniform in time estimates in Lemma \ref{lm:uniform_time_estimate}, the uniqueness for the non-truncated (non-Lipschitz) $S_m(v,w)$ is not clear. Classically, the uniqueness of invariant measure is proved for strictly monotone operators. The goal of this section is to prove Theorem \ref{th:inv_measure_truncated}, that is to show that for a truncated nonlinearity \eqref{eq:S_M}, the invariant measure exists and is unique. The exponential stabilization of solutions will lead to a constructive proof using a pullback limit. We prove also the existence of a stationary solution defined on the whole real line.
\begin{proof}[Proof of Theorem \ref{th:inv_measure_truncated}]
Recall that we set
\begin{align*}
S_M(v, w)= S_0 \, v + S_1 w \,T_{[-M,M]}(v), \quad  T_{[-M,M]}(v) = \max\big(-M, \min(M, v)\big),
\end{align*} 
\noindent for some $M>0$. Theorem \ref{th:existence-(0,T)_kavian} and Lemma \ref{lm:uniform_time_estimate} remain valid for \eqref{eq:orig-prob-vector_kavian_truncation} and an arbitrary initial time $s\ge 0$. 
As in Section \ref{sec:Markov_and_Feller}, we denote by $P_t$, $t\ge 0$, the transition operators associated with the solution $U(t,s,\xi)$ of \eqref{eq:orig-prob-vector_kavian_truncation}.
The key property needed to establish the uniqueness of invariant measure is the following lemma, which guarantees the exponential stability.
Following \cite{liu2015stochastic}, Sec. 4.3, we extend the Wiener process $W$ for negative $t$ by
\begin{align*}
\overline{W}_t =
\begin{cases}
W_t, \text{ for } t \geq 0,\\
V_{-t}, \text{ if } t < 0,
\end{cases}
\end{align*}
where $V_t,t > 0$ is another Wiener process, independent of $W_t$. For $s\le t$, the increments
$(\overline W_t-\overline W_s)$
are independent of the past. We take $(\overline{\mathcal F}_t)_{t\in\mathbb R}$
to be the usual augmentation of the natural two-sided filtration
\[
\overline{\mathcal F}_t
=
\sigma\bigl(\overline W(b)-\overline W(a): -\infty<a\le b\le t\bigr),
\qquad t\in\mathbb R.
\]
Then $\overline W$ is a two-sided Wiener process with respect to
$(\overline{\mathcal F}_t)_{t\in\mathbb R}$.
For simplicity of notation, we still denote the process $\overline{W_t}$ by $W_t$, and $\overline \F_t$ by $\F_t$. We now choose
{$s_1\le s_2 \le 0$} and denote by $U(t,s_1, \xi)$ the solution of \eqref{eq:orig-prob-vector_kavian_truncation} {with the extended Wiener process and}  initial condition $U(s_1, s_1, \xi) = \xi$. Let $U_1(t) := U(t,s_1,\xi)$ and $U_2 := U(t,s_2, \xi)$. 
\begin{lemma}\label{Le:exp_cont}
Suppose the Lipschitz constants $L_b, L_\beta$, and the constant $M$ in \eqref{eq:S_M} are sufficiently small, and the assumptions \ref{assumption_sigma}--\ref{assumption:initial_conditions}, \ref{B1}-\ref{B2} hold. Then 
\begin{itemize}
\item[(i)]
For any $s \leq t$ and for $U_1(t):=U_1(t,s,\xi_1)=(v_1,w_1)$, $U_2(t):=U_2(t,s,\xi_2)=(v_2,w_2)$, $\xi_1, \xi_2 \in H$ defined by \eqref{eq:var_sol_s_xi} we have
\[
\E \|U_1(t,s,\xi_1) - U_2(t,s,\xi_2)\|_{H}^2  \leq e^{-\kappa(t-s)} \E \|\xi_1 - \xi_2\|_{H}^2.
\]
\item[(ii)] For any $-\infty \le s_1 \le s_2\le t$ and $\xi\in H$, there exists $\kappa > 0$ such that 
\[
\E \|U_1(t, s_1,\xi) - U_2(t,s_2,\xi)\|_{H}^2  \leq e^{-\kappa(t-s_2)} \E \|U_1(s_2, s_1,\xi) - \xi\|_{H}^2.
\]
\end{itemize}
\end{lemma}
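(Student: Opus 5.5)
The plan is to prove (i) by an Itô energy estimate for the difference, multiplied by an exponential weight, and then to derive (ii) from (i) using the flow property. Let $\delta v=v_1-v_2$ and $\delta w=w_1-w_2$, where both solutions are driven by the same (two-sided) Wiener process.

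\textbf{Energy identities.} Apply the Itô formula to $e^{\kappa t}\|\delta v\|_{L^2(\Gamma)}^2$ and the chain rule to $e^{\kappa t}\|\delta w\|_{L^2(\Gamma)}^2$. Unlike in Lemma \ref{lm:Feller}, the truncation makes every term globally Lipschitz, so no localization is needed and the martingale term vanishes after taking expectations.

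\textbf{The $v$-equation.} The drift of $\delta v$ produces three contributions.
\begin{itemize}
\item[(a)] The linear part gives $-\frac{2}{c_m}\langle(\mathcal A+S_0{\rm Id})\delta v,\delta v\rangle\le -\frac{2}{c_m}(\Lambda_0+S_0)\|\delta v\|_{L^2(\Gamma)}^2$. Here I use Lemma \ref{lm:L}(i) with $\alpha=S_0$, together with $\|\cdot\|_{H^{1/2}}\ge\|\cdot\|_{L^2}$. In fact it is enough to keep $-\frac{2S_0}{c_m}\|\delta v\|^2$, since $\mathcal A\ge0$.
\item[(b)] The truncated reaction is split as
\[
T_M(v_1)w_1-T_M(v_2)w_2=w_1\big(T_M(v_1)-T_M(v_2)\big)+T_M(v_2)\,\delta w .
\]
The first piece is monotone: it contributes $-\frac{2S_1}{c_m}\big(w_1(T_M(v_1)-T_M(v_2)),\delta v\big)\le0$, because $w_1\ge0$ by Lemma \ref{lm:bound-w_m} and $T_M$ is nondecreasing. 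The second piece is bounded by $\frac{2S_1M}{c_m}\|\delta w\|\|\delta v\|\le \frac{S_1M}{c_m}\big(\|\delta v\|^2+\|\delta w\|^2\big)$, because $|T_M|\le M$.
\item[(c)] The Itô correction is at most $\frac{L_b}{c_m^2}\|\delta v\|^2$.
\end{itemize}

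\textbf{The $w$-equation.} Write $f(v,w)=h(\beta(v)-w)$ and use \eqref{f_monotonicity}. Setting $\tau=\tau_{\rm res}$ and $\mu=\max(1/\tau_{\rm ep},1/\tau_{\rm res})$, this gives
\[
\big(f(v_1,w_1)-f(v_2,w_2),\delta w\big)\le -\tfrac1{\tau}\|\delta w\|^2+\mu L_\beta\|\delta v\|\|\delta w\| .
\]
Young's inequality then bounds this by $-\frac{1}{2\tau}\|\delta w\|^2+C\mu^2L_\beta^2\tau\|\delta v\|^2$.

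\textbf{Combining the estimates.} Summing, the total coefficient of $\|\delta v\|^2$ is
\[
-\tfrac{2S_0}{c_m}+\tfrac{S_1M}{c_m}+\tfrac{L_b}{c_m^2}+2C\mu^2L_\beta^2\tau+\kappa ,
\]
and the coefficient of $\|\delta w\|^2$ is $-\frac1\tau+\frac{S_1M}{c_m}+\kappa$. Under the smallness of $M$, $L_b$ and $L_\beta$, I choose $\kappa>0$ so that both coefficients are nonpositive. This is exactly where the hypothesis enters. Note that $S_0>0$ supplies the zeroth-order dissipation in $v$, and the resealing rate supplies it in $w$. After taking expectations this gives $\E e^{\kappa t}\|\delta U(t)\|_H^2\le e^{\kappa s}\E\|\xi_1-\xi_2\|_H^2$, which is (i). Since $\xi_i$ may be $\mathcal F_s$-measurable random variables in $L^2$, the same computation covers random initial data.

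\textbf{Part (ii).} By uniqueness (the flow property stated after \eqref{eq:var_sol_s_xi}), $U(t,s_1,\xi)=U(t,s_2,U(s_2,s_1,\xi))$. Applying (i) on $[s_2,t]$ with $\xi_1=U(s_2,s_1,\xi)$, which is $\mathcal F_{s_2}$-measurable and square integrable by Theorem \ref{th:existence-(0,T)_kavian}, and $\xi_2=\xi$, gives (ii).

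\textbf{Main obstacle.} The delicate step is the cross term $T_M(v_2)\delta w\,\delta v$ together with the $\beta$-coupling in $f$. These are handled only because truncation gives the bound $M$ and because $L_\beta$ is small; both must be absorbed by $S_0$ and $1/\tau_{\rm res}$. I would also verify that the monotone sign in (b) holds, which relies on $w_1\in[0,1]$.
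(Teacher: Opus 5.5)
Your proof is correct and follows essentially the paper's argument: an It\^o energy estimate for $\delta U$, the same splitting $w_1\big(T_{[-M,M]}(v_1)-T_{[-M,M]}(v_2)\big)+T_{[-M,M]}(v_2)\,\delta w$ into a monotone part and a cross term bounded by $M$, the monotonicity \eqref{f_monotonicity} for the $w$-equation, absorption via smallness of $L_{\rm b}$, $L_\beta$, $M$, and the flow property for (ii). Using an exponential weight instead of Gr\"onwall is only cosmetic, and one small point: the coefficient $(\Lambda_0+S_0)$ in (a) is not what Lemma~\ref{lm:L}(i) gives, since it yields only $\Lambda\|\delta v\|_{L^2(\Gamma)}^2$ for $\mathcal{A}+S_0{\rm Id}$ as a whole and $\mathcal{A}$ has a kernel, but the fallback $-\frac{2S_0}{c_m}\|\delta v\|_{L^2(\Gamma)}^2$ that you actually use is correct.
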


\begin{proof}
(i)
Denote $\delta U:=U_1-U_2$, $\delta v:= v_1 - v_2, \delta w = w_1-w_2$.
By Ito's formula,
\begin{align}\label{exp_contr}
\begin{alignedat}{1}
    \|\delta U(t)\|_H^2  &= \|\delta U(s)\|_H^2 - \frac{2}{c_m} \int_s^t \langle (\mathcal{A} + S_0\, {\rm Id}) \delta v, \delta v \rangle_{H^{-1/2}(\Gamma), H^{1/2}(\Gamma)}\, \dd r \\
   & -  \frac{2 S_1}{c_m} \int_s^t \left(w_1 (T_{[-M,M]}(v_1) - T_{[-M,M]}(v_2), \delta v)\right)_{L^2(\Gamma)} \, \dd r\\
   &-  \frac{2 S_1}{c_m} \int_s^t \left( T_{[-M,M]}(v_2) \delta w, \delta v\right)_{L^2(\Gamma)} \, \dd r  \\
  & + \int_s^t (f(v_1,w_1) - f(v_2,w_2), \delta w)_{L^2(\Gamma)}\, \dd r + M_t + \frac{1}{c_m^2}  \int_s^t \|b(v_1) - b(v_2)\|_{L_2^0}^2 \, \dd r,
  \end{alignedat}
\end{align}
where we denoted
\begin{align*}
    M_t = 2 \int_s^t \Big(\delta U\, , \, (B(U_1)-B(U_2))\dd W_r\Big)_H.
\end{align*}
Since $T_{[-M,M]}(v)$ is monotonously increasing in $v$ and $w_1 \ge 0$, we have
\[
-\left(w_1 (T_{[-M,M]}(v_1) - T_{[-M,M]}(v_2)), \delta v\right)_{L^2(\Gamma)}  \leq 0. 
\]
Next, using \eqref{f_monotonicity}, as in the proof of Lemma \ref{lm:L},  we have
\begin{align*}
(f(v_1,w_1) - f(v_2,w_2), \delta w)_{L^2(\Gamma)} 
& \leq \left(-\frac{1}{2\tau_{\rm res}} + \frac{\gamma_1}{2\tau_{\rm ep}}\right) \|\delta w\|_{L^2(\Gamma)}^2 + L^2 \left(\frac{1}{\tau_{\rm res}} - \frac{\gamma_1}{\tau_{\rm ep}} +  \frac{1}{\tau_{\rm ep} \gamma_1} \right) \|\delta v\|_{L^2(\Gamma)}^2.
\end{align*}
Finally, using again Young's inequality with $\gamma_2>0$ we have
\[
 |\left( T_{[-M,M]}(v_2) \delta w, \delta v\right)_{L^2(\Gamma)}| \le \gamma_2 \|\delta v\|^2 + \frac
 {M^2}{\gamma_2} \|\delta w\|^2.
\]
Altogether, \eqref{exp_contr} now reads as 
\begin{align*}
& \E \|\delta U(t)\|_H^2  \leq  \E \|\delta U(s)\|_H^2 \\
& + \left(-\frac{2\Lambda -2S_1 \gamma_2 }{c_m} + \frac{L_b}{c_m^2}
+ L^2 \left(\frac{1}{\tau_{\rm res}} - \frac{\gamma_1}{\tau_{\rm ep}} +  \frac{1}{\tau_{\rm ep} \gamma_1} \right)  \right) \E \bigg[ \int_s^t \|\delta v\|^2 \, \dd r  \bigg]  \\
& + \left(-\frac{1}{2\tau_{\rm res}} + \frac{\gamma_1}{2\tau_{\rm ep}} + \frac
 {2 S_1M^2}{\gamma_2}\right) \E \bigg[ \int_s^t \|\delta w\|^2\, \dd r \bigg].
\end{align*}
We first choose $\gamma_1>0$ such that 
\[
-\frac{1}{2\tau_{\rm res}} + \frac{\gamma_1}{2\tau_{\rm ep}} \leq -\frac{1}{4\tau_{\rm res}}.
\]
Next, let $\gamma_2, L$ and $L_{\rm b}$ be such that 
\[
-\frac{2\Lambda + 2 S_0 -2S_1 \gamma_2}{c_m} + \frac{L_b}{c_m^2} 
+ L^2 \left(\frac{1}{\tau_{\rm res}} - \frac{\gamma_1}{\tau_{\rm ep}} +  \frac{1}{\tau_{\rm ep} \gamma_1} \right) \leq -\frac{\Lambda +  S_0}{c_m}. 
\]
Finally, choose $M>0$ such that
\[
-\frac{1}{4 \tau_{\rm res}} + \frac{2 S_1 M^2}{\gamma_2} \leq -\frac{1}{8 \tau_{\rm res}}.
\]
This way, choosing 
\[
\kappa:= \min\left\{\frac{1}{8 \tau_{\rm res}}, \frac{\Lambda +  S_0}{c_m}\right\},
\]
we have
\begin{align*}
& \E \|\delta U(t)\|_H^2  \leq  \E \|\delta U(s)\|_H^2  - \kappa \int_{s}^t  \E \|\delta U(r)\|_H^2 \, \dd r,
\end{align*}
and the conclusion follows from Grönwall's inequality.

\noindent
(ii)
By the evolution property of the solution,
\[
U(t,s_1,\xi)
=
U\bigl(t,s_2,U(s_2,s_1,\xi)\bigr).
\]
Hence,
\[
U(t,s_1,\xi)-U(t,s_2,\xi)
=
U\bigl(t,s_2,U(s_2,s_1,\xi)\bigr)-U(t,s_2,\xi).
\]
Applying the exponential contraction estimate (i) with $s=s_2$, $\xi_1=U(s_2,s_1,\xi)$, and $\xi_2=\xi$, we obtain
\begin{align*}
\E \|U(t,s_1,\xi)-U(t,s_2,\xi)\|_H^2
&=
\E \Bigl\|
U\bigl(t,s_2,U(s_2,s_1,\xi)\bigr)-U(t,s_2,\xi)
\Bigr\|_H^2 \\
&\le
e^{-\kappa (t-s_2)}
\E \|U(s_2,s_1,\xi)-\xi\|_H^2.
\end{align*}
Lemma \ref{Le:exp_cont} is proved.
\end{proof}

We proceed now with the proof of Theorem \ref{th:inv_measure_truncated}.
For $t \ge s_2$, by Lemma \ref{Le:exp_cont} and (\ref{eq:better-estimate-v_L^2}), there exist $\kappa, \nu >0$ such that
\begin{align*}
& \E \|U(t,s_1,\xi) - U(t,s_2,\xi)\|_H^2 \leq e^{{-\kappa}(t-s_2)} \E \|U(s_2,s_1,\xi) - \xi\|_H^2 \leq e^{{-\nu}(t-s_2)} C (1 + \|\xi\|_H^2).
\end{align*}
Letting $s_1 \to - \infty$, we conclude that there exists $\eta_t(\xi) \in L^2(\Omega, H_\infty)$ such that 
\begin{equation}\label{eq:inv_conv}
U(t,s,\xi) \to \eta_t (\xi) \,\, \mbox{in}\,\, L^2(\Omega, H_\infty) \text{ as } s \to -\infty.
\end{equation}
Since $H_\infty$ is closed in $H$ and $U(t,s,\xi)$ takes values in $H_\infty$ $\PP$-a.s., $\eta_t$ takes values in $H_\infty\,$ $\PP$-a.s.. 
Let us show that $\eta_t$ is independent of $\xi$. Indeed,
\begin{align*}
0\le \mathbb{E}\,\|\eta_t(\xi_1)-\eta_t(\xi_2)\|_H
&\le
\mathbb{E}\,\|\eta_t(\xi_1)-U(t,s,\xi_1)\|_H +
\mathbb{E}\,\|U(t,s,\xi_1)-U(t,s,\xi_2)\|_H\\
&+
\mathbb{E}\,\|U(t,s,\xi_2)-\eta_t(\xi_2)\|_H
\;\rightarrow 0, \quad s\to -\infty.
\end{align*}
We now define a probability measure on $H_\infty$ as
$\mu := P \circ \eta_0^{-1}$,
i.e.
\begin{equation}\label{def_inv_meas}
\mu(A):=
\mathbb{P}
\bigl(
\{\omega\in\Omega:\eta_0(\omega)\in A\}
\bigr).
\end{equation}
For any bounded Lipschitz function $\varphi:H_\infty \to \R$, using \eqref{eq:inv_conv}, we have
\begin{equation}\label{weak_meas}
\left|
P_{s,t}\varphi(\xi)
-
\mathbb{E}\!\left[\varphi(\eta_t)\right]
\right|
\le
\mathbb{E}
\left|
\varphi(U(t,s,\xi))
-
\varphi(\eta_t)
\right|
\le
L_\varphi\,
\mathbb{E}
\|U(t,s,\xi)-\eta_t\|_H
 \to 0, \quad s\to -\infty.
\end{equation}
Setting $t = 0$ in \eqref{weak_meas}, we conclude that
\begin{equation}\label{weak_meas1}
P_{s,0}(\xi, \cdot) \to \mu \,\, \mbox{ weakly in}\,\, \mathcal{M}_1(H_\infty), \ \ \mbox{as}\,\, s \to -\infty.
\end{equation}
Further, for a bounded Lipschitz
$\varphi:H_\infty\to\mathbb{R}$, it follows from \eqref{weak_meas1} that
\[
\lim_{s\to-\infty}
\int_{H_\infty}
P_{0,t}\varphi(y)\,
P_{s,0}(\xi,\dd y) = \int_{H_\infty} P_{0,t}\varphi(y)\,\mu(\dd y).
\]
On the other hand, using the semigroup property of $P_{s,t}$, namely,
\[
P_{s,0} P_{0,t} = P_{s,t} = P_{0,t-s} = P_{s-t,0}
\]
by \eqref{weak_meas1}, we have
\[
\lim_{s\to-\infty}
\int_{H_\infty}
P_{0,t}\varphi(y)\,
P_{s,0}(\xi,\dd y) = \lim_{s\to-\infty}
\int_{H_\infty} \varphi(y)
P_{s-t,0}(\xi, \dd y) \, =
\int_H
\varphi(y)\,\mu(\dd y),
\]
which implies that 
\[
\int_{H_\infty}
P_{0,t}\varphi(y)\,\mu(\dd y)
=
\int_{H_\infty}
\varphi(y)\,\mu(\dd y) \ \Longleftrightarrow \langle \varphi, P_{0,t}^{*}\mu\rangle
=
\langle P_{0,t}\varphi,\mu\rangle,
\]
implying that $\mu$ is an invariant measure 
$
P_{0,t}^{*}\mu
=
\mu$.

To show its uniqueness, take any invariant measure $\nu$. Since $\nu$ is invariant, for any bounded and Lipschitz $\varphi:{H_\infty} \to \R$,
\[
\int_{H_\infty} \varphi(y)\,\nu(\dd y)
=
\int_{H_\infty} P_{0,t}\varphi(y)\,\nu(\dd y).
\]
Since $\nu({H_\infty})=1$,
using Proposition 4.3.2 in \cite{liu2015stochastic}, we have
\begin{multline*}
\left|
P_{0,t}\varphi(\xi)
-
\int_{H_\infty} \varphi(y)\,\nu(\dd y)
\right|
=
\left|
\int_{H_\infty}
(P_{0,t}\varphi(\xi)-P_{0,t} \varphi(y))
\,\nu(\dd y)
\right| \le
L_{\varphi}\,
e^{-\frac{\kappa}{2}t}
\int_{H_\infty}
\|\xi-y\|_H
\,\nu(\dd y).
\end{multline*}
Since
\[
\int_{H_\infty}
\|\xi-y\|_H
\,\nu(\dd y) < \infty,
\]
we conclude that 
\[
\left|
P_{0,t}\varphi(\xi)
-
\int_{H_\infty} \varphi(y)\,\nu(\dd y)
\right| \to 0 \ \ \text{ as } t \to + \infty,
\]
implying that the invariant measure $\mu$ defined in \eqref{def_inv_meas} is unique. 
Ergodicity of the invariant measure follows from Theorem 11.11 \cite{da1996ergodicity}. The regularity of the support is proved in the same way as in Theorem \ref{th:existence_inv_measure}.

Let $W=(W_t)_{t\in\mathbb{R}}$ be a two-sided Wiener process on a filtered probability space
$(\Omega,\F_t,\mathbb P)$. We proceed with extending the notion of a variational solution from Definition \ref{def:variational} for $t \in \R$.
\begin{definition}
    An $\F_t$-adapted, $H$-valued stochastic process $U(t)$ is called a global variational solution of \eqref{eq:orig-prob-vector_kavian_truncation} if for any $-\infty<s<t<\infty$ we have $U \in L^2((s,t) \times \Omega; V)$ and 
    \begin{align*}
\label{eq:global_sol}
U(t)=U(s) + \int_s^t (\mathbb{A}(U(r)) + P(r))\, \dd r + \int_s^t B(U(r))\, \dd W_r \quad \PP-\mbox{a.s.}.
\end{align*}
\end{definition}

\begin{lemma}[Existence and uniqueness of a stationary solution]
\label{lm:stationary_sol}
\noindent
\begin{itemize}
\item[{[i]}] The process $U^\ast(t):=\eta_t \in H_\infty$ $\PP$-a.s. constructed as the pullback limit of solutions $U(t,s,\xi)$ in \eqref{eq:inv_conv} is a global variational solution of \eqref{eq:orig-prob-vector_kavian_truncation}.
\item[{[ii]}] $U^\ast$ is a stationary process, i.e. for every $n\in\mathbb N$,
every $t_1,\ldots,t_n\in\mathbb{R}$, and every $h\in\mathbb{R}$,
\[
\mathcal L(U^\ast({t_1+h}),\ldots,U^\ast({t_n+h}))
=
\mathcal L(U^\ast({t_1}),\ldots,U^\ast({t_n})).
\]
\item[{[iii]}] $U^\ast$ satisfies global in time bound
\begin{align}
\label{est_stationary_sol}
    \sup_{t\in \mathbb R} \E \|U^\ast(t)\|_{Z}^2 < \infty,
\end{align}
and is a unique stationary solution of \eqref{eq:orig-prob-vector_kavian_truncation}, that is pathwise unique in the class of stationary global variational solutions.
\end{itemize}
\end{lemma}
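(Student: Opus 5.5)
The plan is to prove [i]--[iii] in that order, relying on the pullback limit \eqref{eq:inv_conv}, the contraction of Lemma~\ref{Le:exp_cont}, and the uniform estimates of Lemma~\ref{lm:uniform_time_estimate}.

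For [i], fix $r<t$ and $s<r$, and set $U_s(\cdot):=U(\cdot,s,\xi)$. By the evolution property, $U_s$ restricted to $[r,t]$ is the variational solution on $[r,t]$ with initial datum $U_s(r)$. Let $\tilde U(\cdot):=U(\cdot,r,\eta_r)$ be the variational solution on $[r,t]$ started from $\eta_r$; it exists by Theorem~\ref{th:existence-(0,T)_kavian} for initial time $r$, since $\eta_r\in L^2(\Omega,\F_r,\PP;H_\infty)$. Applying Lemma~\ref{Le:exp_cont}(i) with $\xi_1=U_s(r)$ and $\xi_2=\eta_r$ gives
\[
\E\|U_s(\tau)-\tilde U(\tau)\|_H^2\le e^{-\kappa(\tau-r)}\E\|U_s(r)-\eta_r\|_H^2\to0,\qquad s\to-\infty,\ \tau\in[r,t].
\]
The left side also converges to $\E\|\eta_\tau-\tilde U(\tau)\|_H^2$. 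Hence $\eta_\tau=\tilde U(\tau)$ a.s. for each $\tau$, and by continuity of $\tilde U$ we may take $U^\ast|_{[r,t]}=\tilde U$ as a continuous modification. Adaptedness of $\eta_t$ to $\overline{\F}_t$ follows because $U(t,s,\xi)$ is $\F_t$-measurable and $L^2$-limits preserve this. The integrability $U^\ast\in L^2((r,t)\times\Omega;V)$ is inherited from Theorem~\ref{th:existence-(0,T)_kavian}[ii]. Consistency on overlapping intervals follows from uniqueness, so $U^\ast$ is a global variational solution.

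For [ii], the key observation is that $U(t,s,\xi)$ is a measurable functional $\Phi_{t-s}$ of $\xi$ and of the increments $(W_{s+\theta}-W_s)_{\theta\in[0,t-s]}$; this follows from pathwise uniqueness and the Yamada--Watanabe-type representation implicit in the construction of \cite{liu2015stochastic}. The two-sided Wiener process has shift-invariant increments, so for deterministic $\xi$ we get
\[
\mathcal L\big(U(t_1+h,s+h,\xi),\dots,U(t_n+h,s+h,\xi)\big)=\mathcal L\big(U(t_1,s,\xi),\dots,U(t_n,s,\xi)\big).
\]
Letting $s\to-\infty$, the $L^2$ convergence \eqref{eq:inv_conv} holds jointly for finitely many times, which passes equality of joint laws to $(\eta_{t_1+h},\dots)$ and $(\eta_{t_1},\dots)$. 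In particular $\mathcal L(U^\ast(t))=\mu$ for all $t$.

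For [iii], the bound \eqref{est_stationary_sol} reduces via stationarity to $\E\|\eta_0\|_Z^2=\int\|y\|_Z^2\mu(\dd y)$, which is finite by the support regularity already established (the argument of Theorem~\ref{th:existence_inv_measure}, which applies because a variable with law $\mu$, namely $\eta_0$, exists). For uniqueness, let $\tilde U$ be another stationary global variational solution with $\sup_t\E\|\tilde U(t)\|_H^2<\infty$; stationarity together with second moments under the invariant law should give this. For $s<t$, the evolution property gives $\tilde U(t)=U(t,s,\tilde U(s))$ and $U^\ast(t)=U(t,s,U^\ast(s))$, so Lemma~\ref{Le:exp_cont}(i) yields
\[
\E\|\tilde U(t)-U^\ast(t)\|_H^2\le e^{-\kappa(t-s)}\E\|\tilde U(s)-U^\ast(s)\|_H^2\le 2e^{-\kappa(t-s)}\sup_r\big(\E\|\tilde U(r)\|_H^2+\E\|U^\ast(r)\|_H^2\big).
\]
The right side tends to $0$ as $s\to-\infty$, and continuity then gives indistinguishability.

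The main obstacle is [ii]: making rigorous that the solution map is a measurable functional of the noise increments, so that time-shifting preserves laws. I would justify this through the Picard/Galerkin construction of Theorem 5.1.3 in \cite{liu2015stochastic}, where each approximation is visibly such a functional, and pass to the limit. A secondary subtlety is the need for a finite second moment of the competitor $\tilde U$ in [iii]. In $H_\infty$ the $w$-component is bounded, so only $\E\|\tilde v\|_{L^2}^2<\infty$ is needed, and I would either assume it in the class or derive it from uniqueness of $\mu$ together with \eqref{eq:better-estimate-v_L^2}.
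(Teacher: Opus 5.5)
Your proposal is correct and follows the paper's route. For [i] you use the flow property together with the contraction of Lemma~\ref{Le:exp_cont} and take a continuous modification; for [ii] you use shift-invariance of the finite-dimensional laws under time-homogeneity and pass to the pullback limit; for [iii] you use the finite $Z$-moment of $\mu$ and the contraction combined with stationarity. Your justification of the shift-invariance through a measurable functional of the noise increments, and your attention to the competitor's finite second moment, make explicit two points that the paper's proof takes for granted.
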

\begin{proof}
{[i]} For fixed $t\in\mathbb{R}$ and $s<t$, $\eta_t$ is $\F_t$-measurable as the limit in $L^2(\Omega;H)$ of 
$U(t,s,\xi)$. In particular, $\eta_r$ is an admissible initial condition at every time
$r\in\mathbb{R}$. By the flow property,
\[
U(t,s,\xi)=U(t,r,U(r,s,\xi)), \quad s<r.
\]
Letting $s\to-\infty$, we have
\[
U(t,s,\xi)\to \eta_t, \quad U(r,s,\xi)\to \eta_r, \quad
\qquad \text{in }L^2(\Omega;H).
\]
By continuous dependence of solutions on the initial data (see Lemma \ref{Le:exp_cont}),
\[
U(t,r,U(r,s,\xi))\to U(t,r,\eta_r)
\qquad \text{in }L^2(\Omega;H).
\]
Hence
\begin{align}
\label{eta_t}
\eta_t=U(t,r,\eta_r)
\qquad \text{in }L^2(\Omega;H),
\end{align}
and therefore $\eta_t=U(t,r,\eta_r)$
$\PP$-a.s.. Note that the equality above is proved for each $t\in \mathbb{R}$ $\PP$-a.s., that is for each $t$ there exists a null set $N_t$ depending on $t$, such that $\eta_t(\omega)=U(t,r,\eta_r)(\omega)$ holds for $\omega \notin N_t$. Since $t \mapsto U(t,r,\eta_r)$ has continuous $H$-paths, and thus the process $\widetilde{\eta}_t = U(t,r,\eta_r)$ for all $t\ge r$ is a continuous modification of $\eta_t$ on $[r, +\infty)$, for which we keep the same notation $\eta_t$.
In order to obtain an equality outside one common null set, we can use the rational time argument. Indeed, taking $t$ first in the countable set $\mathbb{Q}\cap[r,T]$, we obtain a
single null set outside which \eqref{eta_t} holds.
Since both processes have now $H$-continuous paths on $[r,T]$, the equality extends to every $t\in[r,T]$. Thus, after modification on a null set, \eqref{eta_t} holds for all $t\in[r,T]$.
Consequently, the restriction $\eta|_{[r,T]}$ is the variational solution of \eqref{eq:orig-prob-vector_kavian_truncation} on $[r,T]$ with initial data given by the value of the process at the left endpoint.



\medskip
\noindent
{[ii]} Let
$n\in\mathbb N$, $t_1\le \cdots\le t_n$, and $h\in\mathbb{R}$. Let $\Phi\in \operatorname{Lip}_b(H_\infty^n)$. For $s<t_1+h$,
\[
\bigl(
U(t_1+h,s,\xi),\ldots,U(t_n+h,s,\xi)
\bigr)
\to
(\eta_{t_1+h},\ldots,\eta_{t_n+h}) \,\,\mbox{in}\, \,L^2(\Omega; H^n), \quad s\to -\infty.
\]
Therefore
\[
\mathbb E\Phi(\eta_{t_1+h},\ldots,\eta_{t_n+h})
=
\lim_{s\to-\infty}
\mathbb E
\Phi
\bigl(
U(t_1+h,s,\xi),\ldots,U(t_n+h,s,\xi)
\bigr).
\]
By time-homogeneity and stationary increments of the two-sided $W_t$,
\[
\mathcal L
\bigl(
U(t_1+h,s,\xi),\ldots,U(t_n+h,s,\xi)
\bigr)
=
\mathcal L
\bigl(
U(t_1,s-h,\xi),\ldots,U(t_n,s-h,\xi)
\bigr).
\]
Hence
\[
\begin{aligned}
&\mathbb E\Phi(\eta_{t_1+h},\ldots,\eta_{t_n+h})
=
\lim_{s\to-\infty}
\mathbb E
\Phi
\bigl(
U(t_1,s-h,\xi),\ldots,U(t_n,s-h,\xi)
\bigr).
\end{aligned}
\]
Passing to the limit as $s\to-\infty$ yields
\[
\mathbb E\Phi(\eta_{t_1+h},\ldots,\eta_{t_n+h})
=
\mathbb E\Phi(\eta_{t_1},\ldots,\eta_{t_n}).
\]
Since bounded Lipschitz test functions determine probability measures on the
Polish space $H_\infty^n$, we conclude that
\[
\mathcal L(\eta_{t_1+h},\ldots,\eta_{t_n+h})
=
\mathcal L(\eta_{t_1},\ldots,\eta_{t_n}).
\]
Thus $\eta=(\eta_t)_{t\in \mathbb R}$ is stationary. In particular, $\mathcal{L}(\eta_t)=\mu$ for all $t\in \mathbb{R}$.

\medskip
\noindent
{[iii]} Finally, to derive \eqref{est_stationary_sol}, we note that the invariant measure $\mu$ has a finite second moment \eqref{eq:support}, and therefore
\begin{align*}
    \sup_t \E \|\eta_t\|_{Z}^2 
    = \int_{H_\infty} \|y\|_Z^2 \, \mu(\dd y) < \infty. 
\end{align*}
The uniqueness of a stationary solution follows now from \eqref{est_stationary_sol} and Lemma \ref{Le:exp_cont}. Indeed, suppose there exist two stationary solutions $U_1^\ast$ and $U_2^\ast$. Then
\begin{align}
\label{eq:aux5}
\E\|U_1^\ast(t)-U_2^\ast(t)\|_H^2 \le e^{-\kappa(t-s)} \E \|U_1^\ast(s)-U_2^\ast(s)\|_H^2, \quad s\le t.
\end{align}
Using the stationarity
\[
\E \|U_1^\ast(s)-U_2^\ast(s)\|_H^2 \leq 2(\E \|U_1^\ast(s)\|^2 +\|U_2^\ast(s)\|_H^2) = 2 (\E \|U_1^\ast(0)\|^2 +\|U_2^\ast(0)\|_H^2),
\]
and passing to the limit in \eqref{eq:aux5} as $s\to -\infty$, we obtain $U_1^\ast(t)=U_2^\ast(t)$ for all $t\in \mathbb{R}$, $\PP$-a.s.. 
Lemma~\ref{lm:stationary_sol} and thus, Theorem \ref{th:inv_measure_truncated} is proved.
\end{proof}

\end{proof}

\section{Numerical example}\label{sec:numerics}

In order to illustrate how the solution $(v,w)$ of \eqref{eq:stochastic_kavian} behaves, we compute it numerically on $G=(0,1)^2$ being the union of two non-overlapping domains $G_{\rm i}$ and $G_{\rm e}$, where $G_{\rm i}= B_{\frac{1}{4}}(\frac{1}{2}, \frac{1}{2})$ is the circular cell. Periodic boundary conditions are imposed on the external boundary $\partial G$, and zero initial conditions $v_0=w_0=0$ are assumed. When problem \eqref{eq:micro_deterministic_on_Gamma} is stated in a domain with multiple cells representing a tissue, a dimension analysis can be performed to obtain a dimensionless form of equations (see \cite{geback2025derivation}). We assume that this dimension analysis is already done, and will use the dimensionless parameters given in Table~\ref{tab:dimensionless_coeffs}. The function $\beta$ in \eqref{eq:f} is given by
\begin{align*}
    \beta(\xi) = \frac{1+\tanh(k_{\text{ep}}(|\xi | - V_{\text{th}}))}{2}, \quad k_{\text{ep}} = 40, \quad V_{\text{th}} = 2.5.
\end{align*}
\begin{table}[htp]
\centering
\begin{tabular}{l|l|l}
    Parameter & Symbol & Value \\[1mm]
    \hline
    Intracellular conductivity & $\sigma_{\rm i}$  & {$0.239$} \\[1mm]
    Extracellular conductivity & $\sigma_{\rm e}$ & {2.632}\\[1mm]
    Membrane conductivity before EP &$S_0$ & $1$ \\[1mm]
    Max increase in membrane conductivity & $S_1$ & $10001$ \\[1mm]
    EP  time & $\tau_{\text{ep}}$ & 1 \\[1mm]
    Resealing time & $\tau_{\text{res}}$ & 10 \\[1mm]
    Membrane capacitance & ${c}_m$ & 1 \\
    \hline
\end{tabular}
\caption{Dimensionless coefficients in \eqref{eq:stochastic_kavian}.}
\label{tab:dimensionless_coeffs}
\end{table}

We include the external excitation on $\Gamma$ in the form $\sigma_{\rm i} \nabla p\cdot \n=\sigma_{\rm i} g\, e_1\cdot \n$, that is the applied electric field is constant and acts in the direction of $x_1$. The problem to solve becomes
\begin{align}
\label{eq:micro_example}
\begin{alignedat}{2}
-{\rm div}(\sigma (\nabla u+g\, e_1))  &= 0 &\quad& \mbox{in}\,\, (0,T]\times G_{\rm i} \cup G_{\rm e}, \\
\sigma_{\rm i} (\nabla u_{\rm i} + g\, e_1)  \cdot \n &= \sigma_{\rm e} (\nabla u_{\rm e} + g\, e_1)  \cdot \n =: I_m &\quad&\mbox{on}\,\, (0,T]\times\Gamma, \\
c_m \dd [u]&= (-I_m - S_m([u],w))\dd t + b([u]) \dd W_t
&\quad&\mbox{on}\,\,(0,T]\times\Gamma,
\\
\dd w&= f([u], w)\, \dd t&\quad&\mbox{on}\,\,(0,T]\times\Gamma, \\
[u](0,x) &=0, \quad w(0,x)=0&\quad&\mbox{on}\,\,\Gamma,\\
u_{\rm e} & \,\,\text{is}\,\, (0,1)^2\text{-periodic}. & &
\end{alignedat}
\end{align}

To derive a weak formulation to be used in numerical computations, we follow the ideas in \cite{kuchta2021solving}. We multiply \eqref{eq:micro_example} by test functions $\phi_i \in H^1(G_{\rm i})$, $\phi_e \in H^1(G_{\rm e})$ and integrate by parts to yield the problem: find $u_{\rm i}$, $u_{\rm e}$, and $I_m$ such that
\begin{equation}
    \begin{aligned}
    &\int_{G_{\rm i}} \sigma_{\rm i} \nabla u_{\rm i} \cdot \nabla \phi_i \, \dd x - \int_\Gamma I_m \,  \phi_i \, \dd S = -\int_{\Gamma} \sigma_{\rm i} \, g e_1 \cdot \n\, \phi_i \, \dd x, \\
    &\int_{G_{\rm e}} \sigma_{\rm e} \nabla u_{\rm e} \cdot \nabla \phi_e \, \dd x + \int_\Gamma I_m \,  \phi_e \, \dd S = \int_{\Gamma} \sigma_{\rm e} \, g e_1 \cdot \n \, \phi_e \, \dd x, \\
    & I_m = \sigma_{\rm i} (\nabla u_{\rm i} + g\, e_1)  \cdot \n, \\
    & \int_{\Gamma}(
    u_{\rm i}-u_{\rm e}) j_m dS = \int_{\Gamma} v j_m dS,
    \end{aligned}
    \label{eq:weak_form_numerics}
\end{equation}
where $v$ satisfies the dynamic condition in \eqref{eq:micro_example} and $j_m \in H^{1/2}(\Gamma)$. Note that the mapping of the jump $v$ to the boundary flux $I_m$ corresponds to the operator $\mathcal{A}$ in \eqref{def:L}.
Given a jump $v = u_{\rm i}\big|_{\Gamma}-u_{\rm e}\big|_{\Gamma}$, the solution of \eqref{eq:weak_form_numerics} gives $I_m[v]$, which we can use to solve the SDE in the third equation in \eqref{eq:micro_example}
\begin{align*}
    c_m \dd v = - \big(I_m[v] + S_m(v,w)\big)\dd t + b(v)\dd W_t,
\end{align*}
using a semi-implicit Euler method. 
To derive an implicit solver for $I_m$, we consider an equidistant time discretization $0 = t_0 < t_1 < \ldots < t_n = T, \, \, n \in \mathbb{N}$ with step size $\Delta t=t_{k+1}- t_k>0$, and
use time-splitting on the third equation in \eqref{eq:micro_example}, which is split into  
\begin{align*}
    \displaystyle c_m \frac{\tilde v_k(x_s) - v(t_k, x_s)}{\Delta t} &= -I_m[\tilde v_k], \\
    \displaystyle
    c_m \frac{v(t_{k+1}, x_s) - \tilde v_k(x_s)}{\Delta t} &= -S_m(v(t_{k+1}, x_s),w(t_k,x_s)) + b(v(t_k,x_s)) \sqrt{\frac{1}{\Delta t}} \zeta,
\end{align*}
for each mesh point $x_s \in \Gamma$, with $\zeta \sim \mathcal{N}(0,1)$.
We then insert $\tilde v_k = v(t_k) - \frac{\Delta t}{c_m} I_m$
for $v$ in \eqref{eq:weak_form_numerics},
and write the equations in matrix form as
\begin{align}
     a(U,\Phi) = l_k(\Phi),
     \label{eq:linear_system}
\end{align}
 where for $U = (u_{\rm i}, u_{\rm e}, I_m )$ and $\Phi= (\phi_i, \phi_e, j_m)$, we define
\begin{align*}
    a(U,\Phi) = \begin{pmatrix}
        \int_{G_{\rm i}} \sigma_{\rm i} \nabla u_{\rm i} \cdot \nabla \phi_i \, \dd x -\int_\Gamma I_m \phi_i \, \dd S \\[2mm]
        \int_{G_{\rm e}} \sigma_{\rm e} \nabla u_{\rm e} \cdot \nabla \phi_e \, \dd x + \int_\Gamma I_m \phi_e \, \dd S \\[2mm]
        \int_\Gamma u_{\rm i} j_m \, \dd S - \int_\Gamma u_{\rm e} j_m \, \dd S + \int_\Gamma \frac{\Delta t}{c_m} I_m j_m \, \dd S
\end{pmatrix},
\quad
l_k(\Phi) = \begin{pmatrix}
                - \int_{\Gamma}  \sigma_{\rm i}g\, e_1 \cdot \n \, \phi_i \, \dd x \\[2mm]
                \int_{\Gamma}  \sigma_{\rm e} g\, e_1 \cdot \n \, \phi_e \, \dd x \\[2mm]
                \int_\Gamma v(t_k) j_m \, \dd S
    \end{pmatrix}.
\end{align*}

Discretizing this system by a choice of test functions $\Phi$ yields an implicit solver for $I_m$, which we use to obtain $\tilde v_k$. Applying a semi-implicit Euler--Maruyama method, the second time-splitting equation, an SDE, then yields
\begin{align*}
    v(t_{k+1}, x_s) = \frac{1}{1+ \frac{\Delta t}{c_m}(S_0 + S_1 w(t_k,x_s))} \Big( v(t_{k},x_s) - \frac{\Delta t}{c_m} I_m[\tilde v_k](x_s) + \frac{1}{c_m} b(v(t_k,x_s)) \sqrt{\Delta t}\, \zeta \Big)
\end{align*}
for $\Delta t = t_{k+1}- t_k$, each mesh point $x_s \in \Gamma$, and $\zeta \sim \mathcal{N}(0,1)$.
Similarly, we solve the ODE for $w$ using a semi-implicit Euler method for time step size $\Delta t = t_{k+1}- t_k$: 
\begin{align*}
    w(t_{k+1},x_s) = \frac{w(t_k,x_s) + \Delta t \, \tau_{\text{max}} \beta(v(t_k,x_s))}{1+ \Delta t \,\tau_{\text{max}}},
\end{align*}
with
\begin{align*} \tau_{\text{max}} =
\begin{cases}
\tau_{\text{ep}} \,\, \mbox{if} \,\, \beta(v(t_k,x_s)) \geq w(t_k,x_s),\\ 
\tau_{\text{res}} \,\, \mbox{if} \,\, \beta(v(t_k,x_s)) < w(t_k,x_s),
\end{cases}
\end{align*}
for each mesh point $x_s \in \Gamma$. 

The simulations were performed using the FEM package FEniCSx 0.9 \cite{BarattaEtal2023,AlnaesEtal2014} to discretize and solve \eqref{eq:linear_system} using CG1 elements. We use gmsh (version 4.14.1) to generate the mesh. 
Below, we present numerical solution $(v,w)$ for additive and multiplicative noise. 

In Figure~\ref{fig:contour_nonoise} one can see the results of the simulations of the electric potential $u(t,x)$ in the deterministic case, and in Figure~\ref{fig:contour_additive} the solution for additive noise.
\begin{figure}[ht]
  \centering
  \begin{subfigure}[t]{0.33\textwidth}
    \centering
    \includegraphics[width=\linewidth]{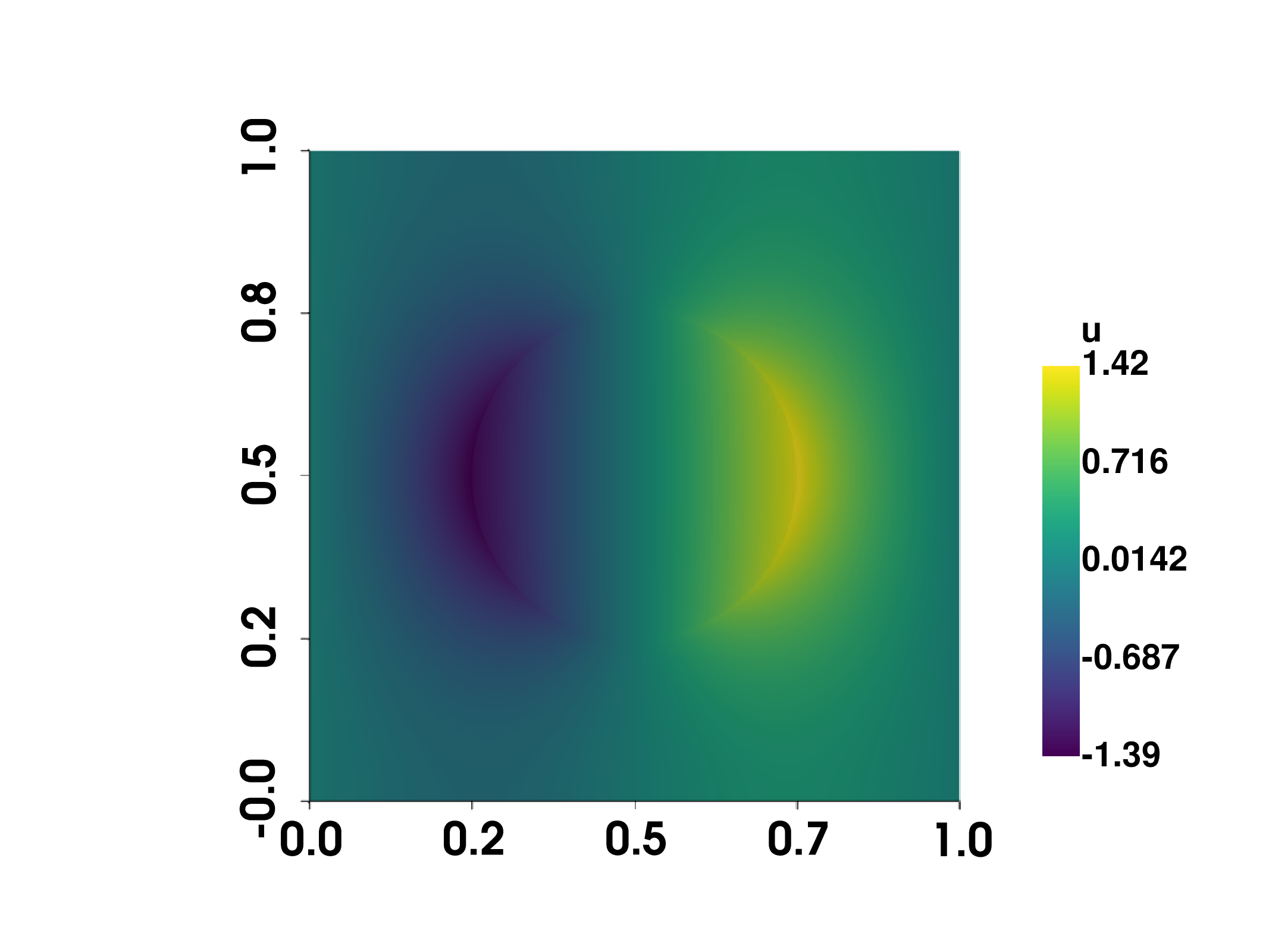}
    \caption{$t_0=0 \, \mu$s}
  \end{subfigure}\hfill
  \begin{subfigure}[t]{0.33\textwidth}
    \centering
    \includegraphics[width=\linewidth]{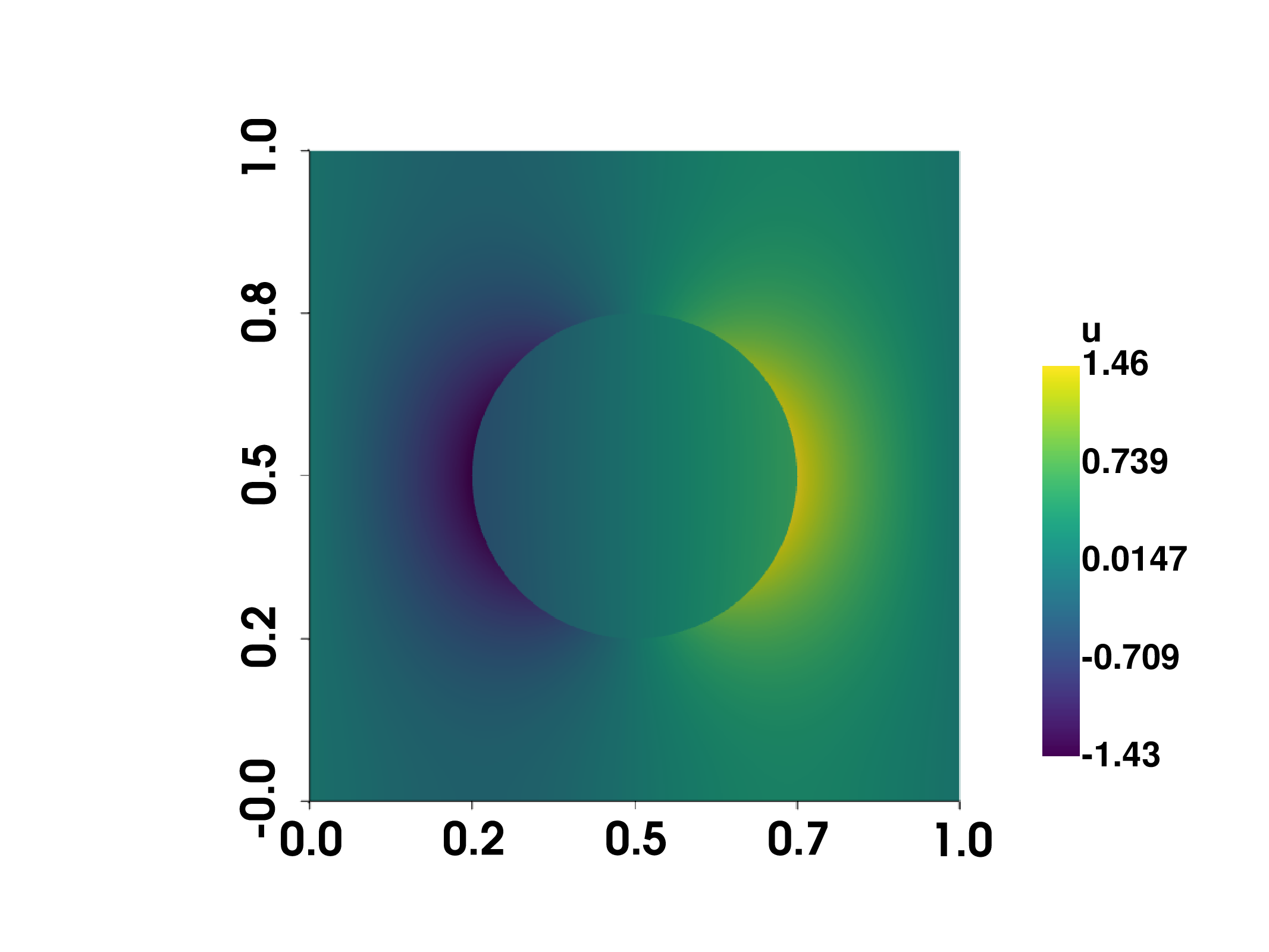}
    \caption{$t_1=0.25 \, \mu$s}
  \end{subfigure}\hfill
    \begin{subfigure}[t]{0.33\textwidth}
    \centering
    \includegraphics[width=\linewidth]{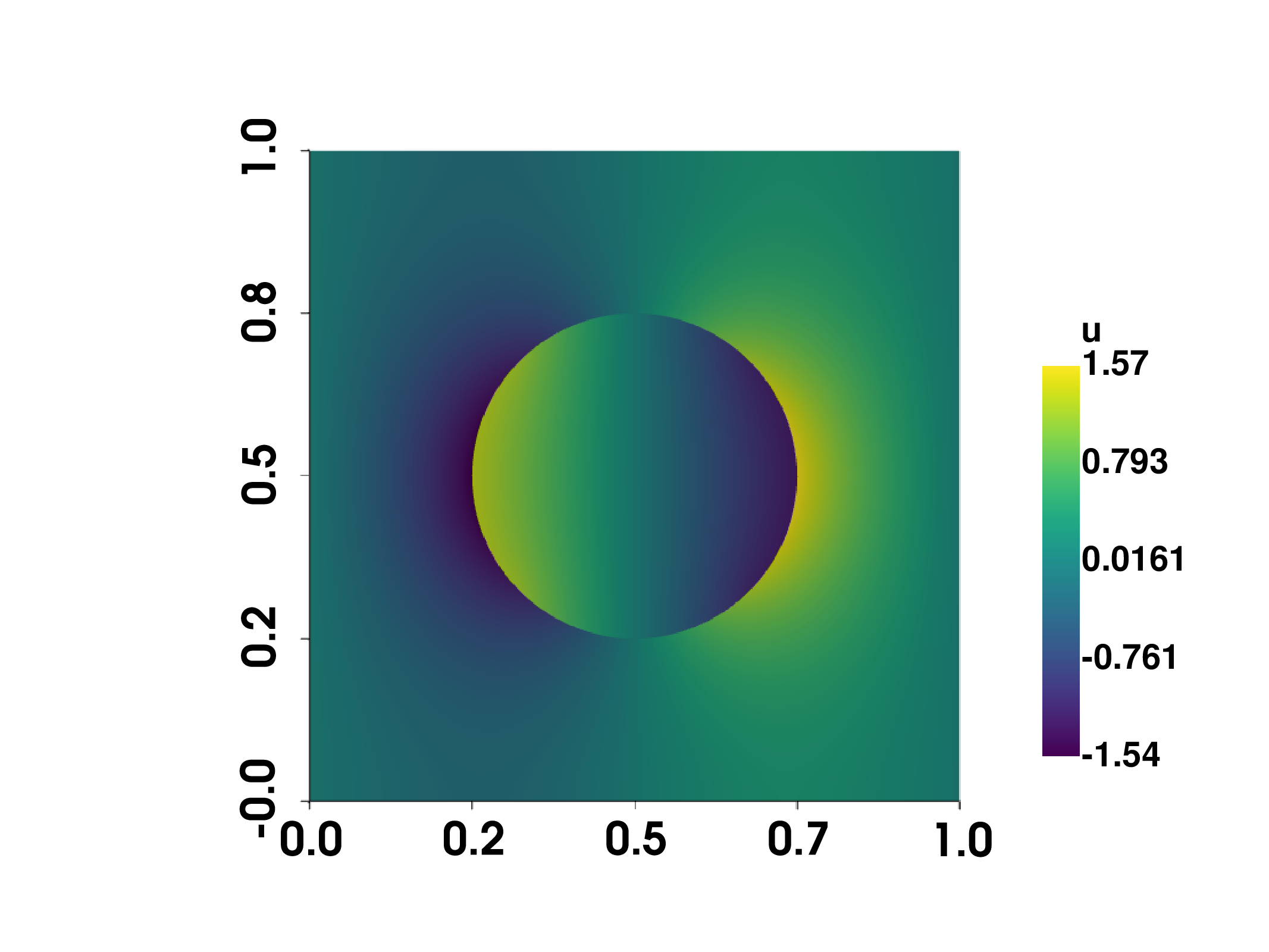}
    \caption{$t_2=1.875 \, \mu$s}
  \end{subfigure}\hfill
  \caption{Electric potential $u$ [V] in the periodicity cell $Y$ over time for the deterministic case.}
  \label{fig:contour_nonoise}
\end{figure}

\begin{figure}[ht]
  \centering
  \begin{subfigure}[t]{0.33\textwidth}
    \centering
    \includegraphics[width=\linewidth]{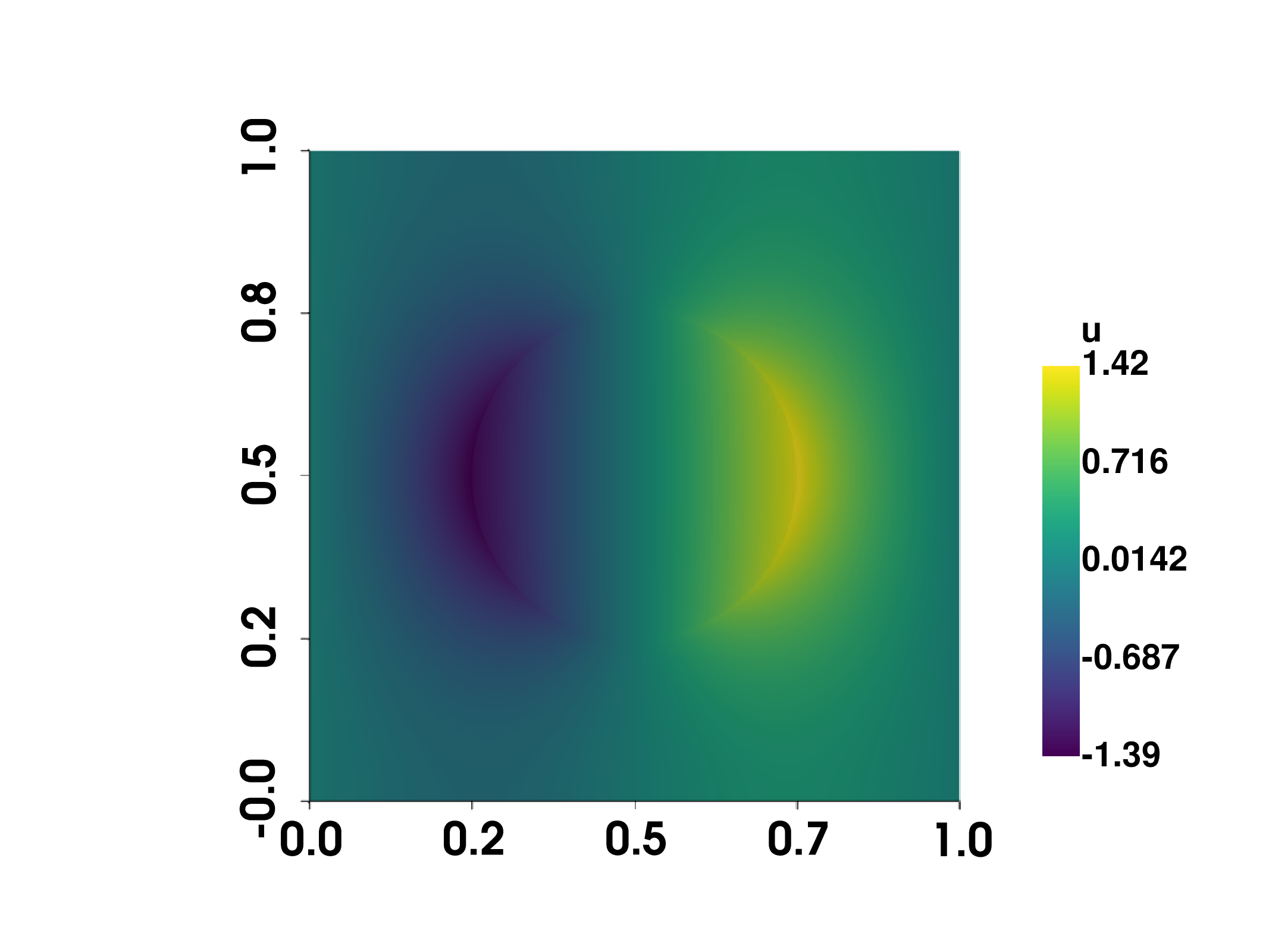}
    \caption{$t_0=0 \, \mu$s}
  \end{subfigure}\hfill
  \begin{subfigure}[t]{0.33\textwidth}
    \centering
    \includegraphics[width=\linewidth]{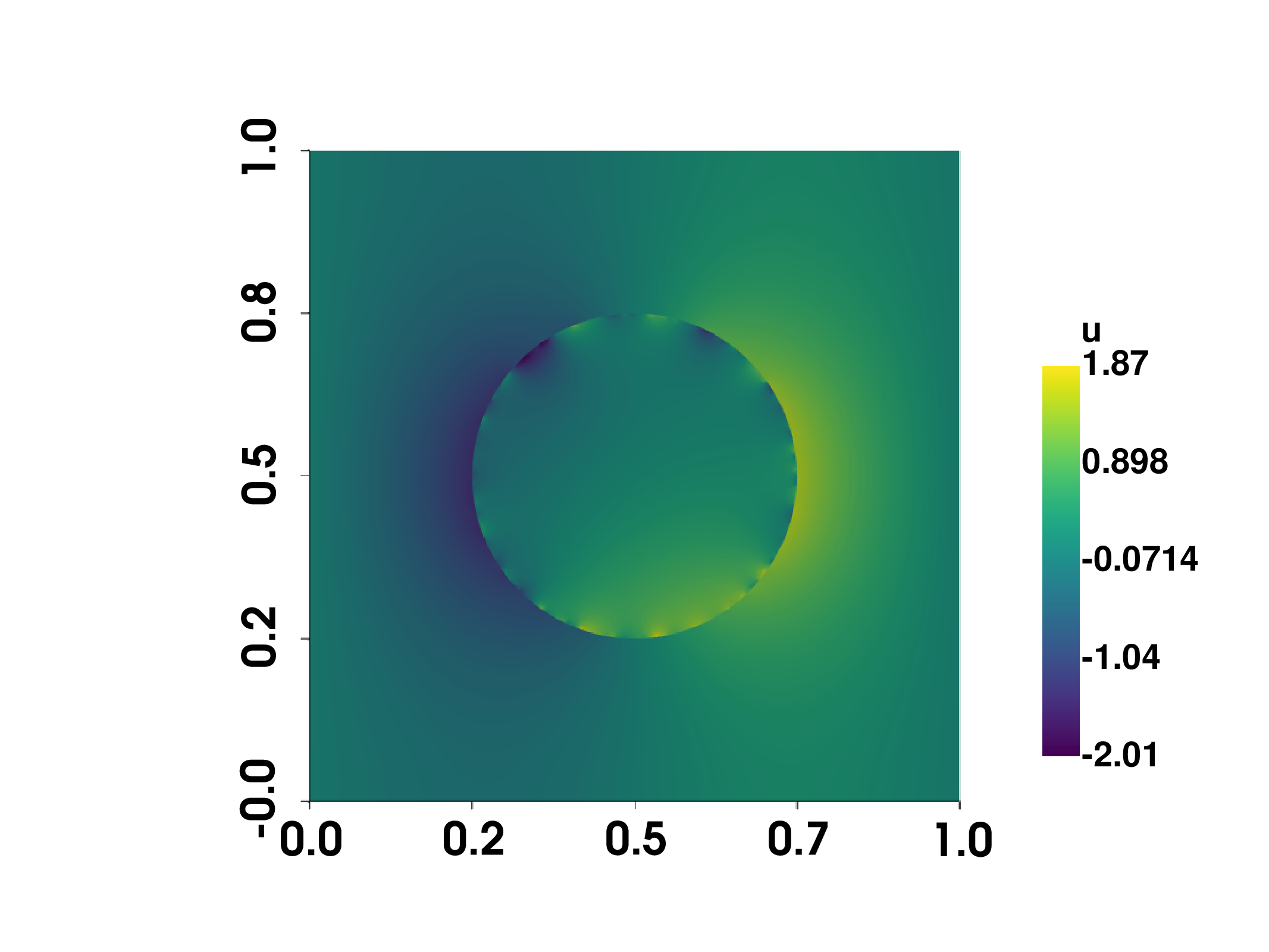}
    \caption{$t_1=0.25 \, \mu$s}
  \end{subfigure}\hfill
    \begin{subfigure}[t]{0.33\textwidth}
    \centering
    \includegraphics[width=\linewidth]{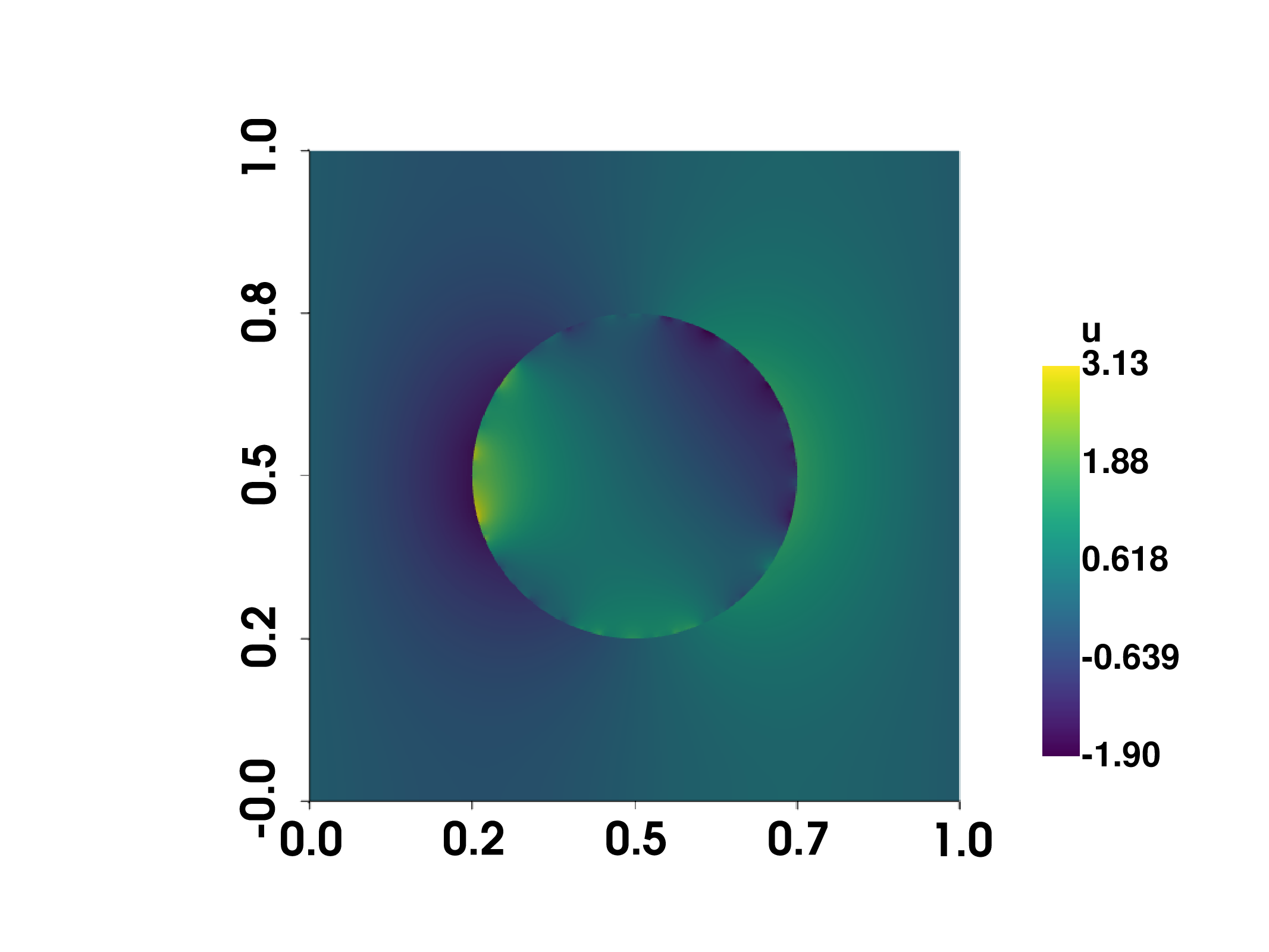}
    \caption{$t_2=1.875 \, \mu$s}
  \end{subfigure}\hfill
  \caption{Electric potential $u$ [V] in the periodicity cell $Y$ over time for additive noise $b(v) = 5$.}
  \label{fig:contour_additive}
\end{figure}















\subsection{Additive noise}
\begin{figure}[!htb]
  \centering
  \begin{subfigure}[t]{0.45\textwidth}
    \centering
    \includegraphics[width=\linewidth]{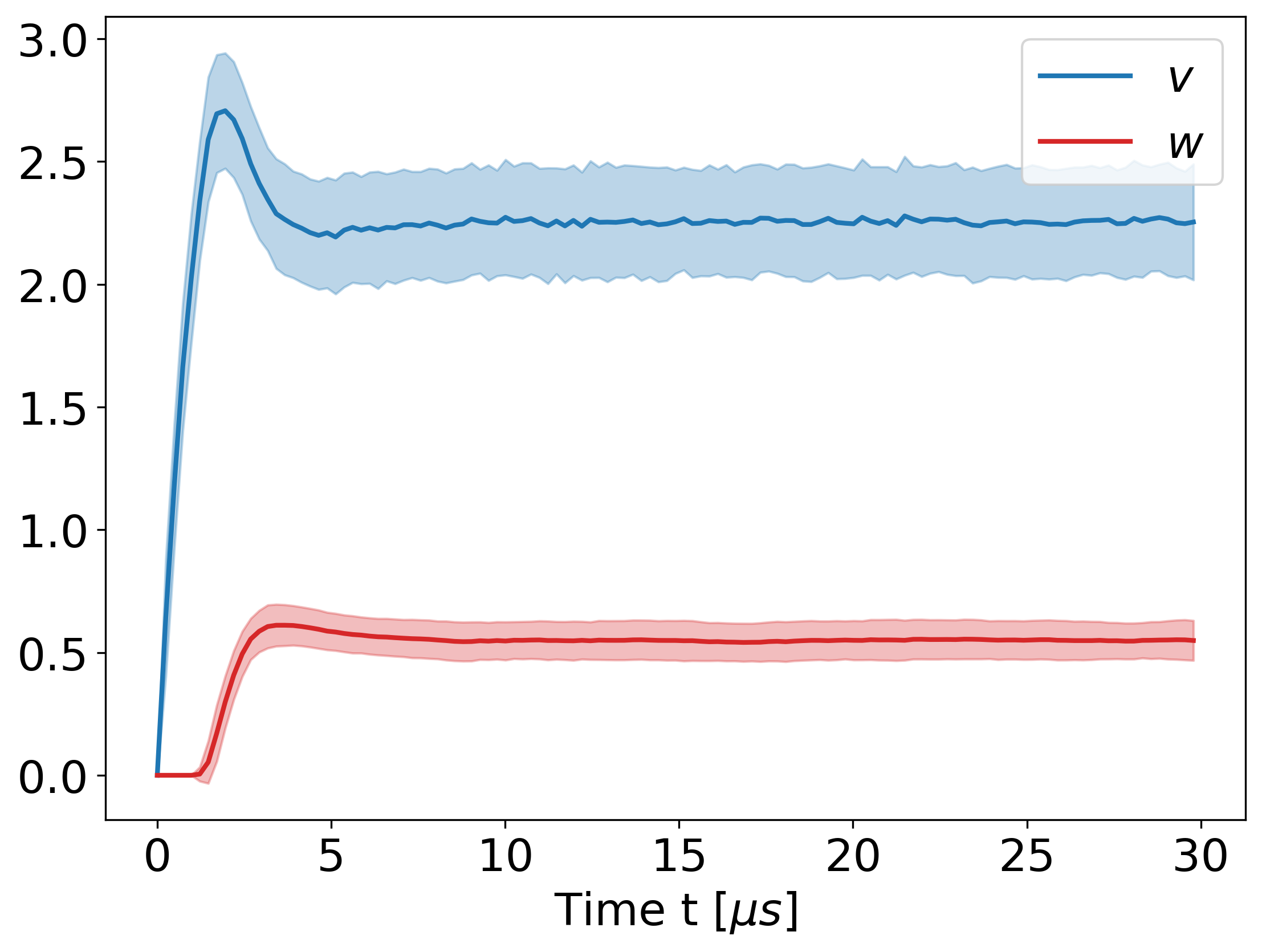}
    \caption{$v$ and $w$ at the pole $\theta=\pi$.}
    \label{fig:v_w_vs_t}
  \end{subfigure}\hfill
  \begin{subfigure}[t]{0.45\textwidth}
    \centering
    \includegraphics[width=\linewidth]{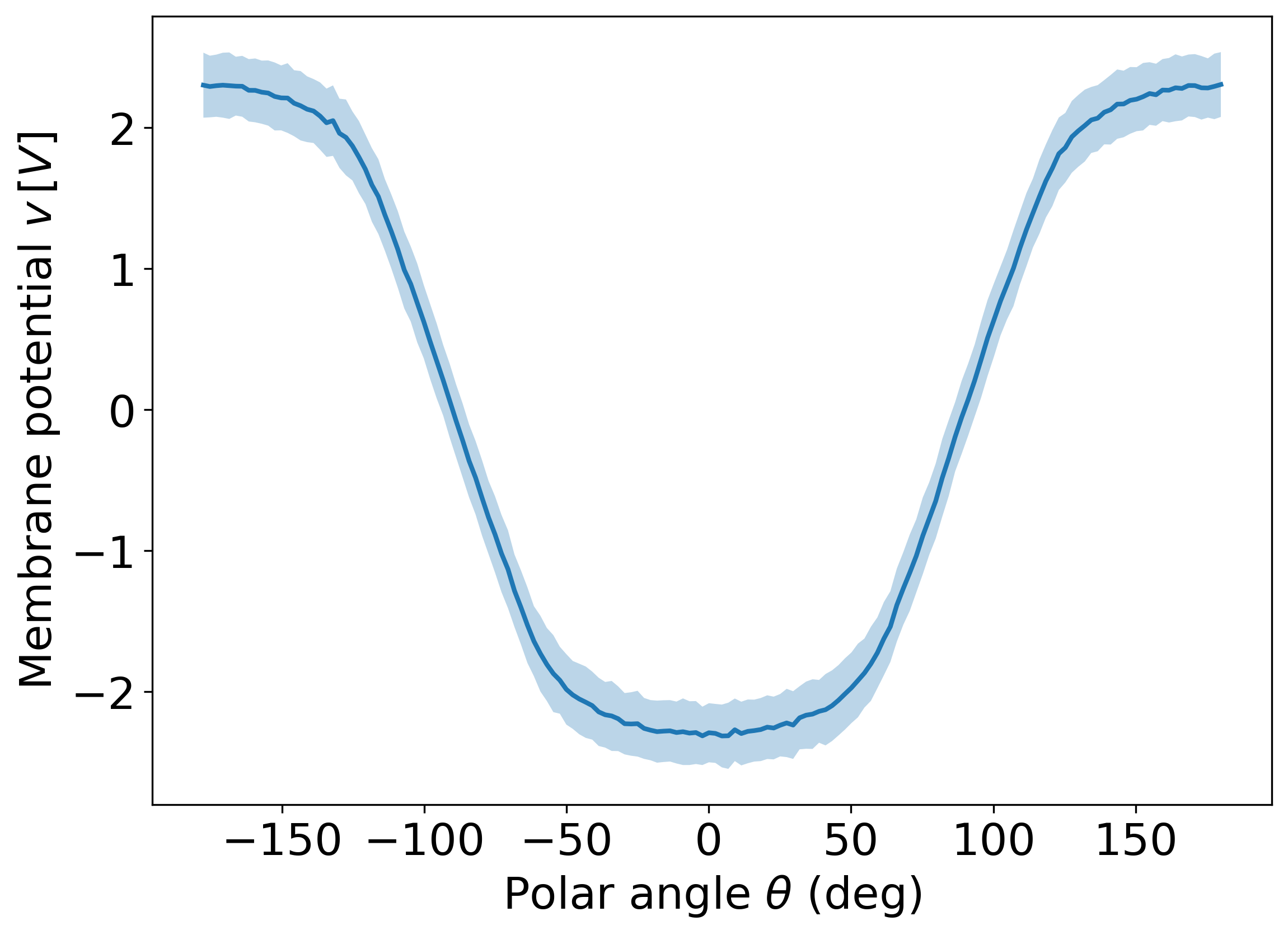}
    \caption{$v$ at $T=500 \, \mu$s.}
    \label{fig:v_vs_theta}
  \end{subfigure}\hfill
  \caption{Membrane potential $v$ and degree of porosity $w$ in the case of additive noise.}
  \label{fig:v_and_w_additive}
\end{figure}
We choose $E=\mathbb{R}$, $Q={\rm Id}$, and $W_t = \alpha \zeta(t)$ a real-valued standard Brownian motion. The noise amplitude is $\alpha=0.5$. The stochastic perturbation might be interpreted as a global temporal fluctuation of the membrane potential, spatially uniform along the cell boundary, or as random fluctuations of the applied voltage $g$. In Figure~\ref{fig:v_and_w_additive} we present the results of the simulations of the solution $(v,w)$ as a function of time at the pole of the cell corresponding to the polar angle $\theta=\pi$ for $t\in [0, 30] \, \mu$s. The shaded region, based on $500$ Monte-Carlo simulations, shows one standard deviation of the solution $v$. One can see that the potential grows rapidly and then, after reaching its maximum, decays due to the simultaneous growth of the porosity degree $w$, as it is seen in Figure \ref{fig:v_w_vs_t}. The electric potential at the final time $T_{\rm final}=300 \, \mu$s, as a function of the polar angle $\theta$, is shown in Figure \ref{fig:v_vs_theta}. One can see that it attains its maximum for $\theta=\pi$ and is close to zero at $\theta=\pm \pi/2$.
\begin{figure}[!htb]
  \centering
  \begin{subfigure}[t]{0.45\textwidth}
    \centering
    \includegraphics[width=\linewidth]{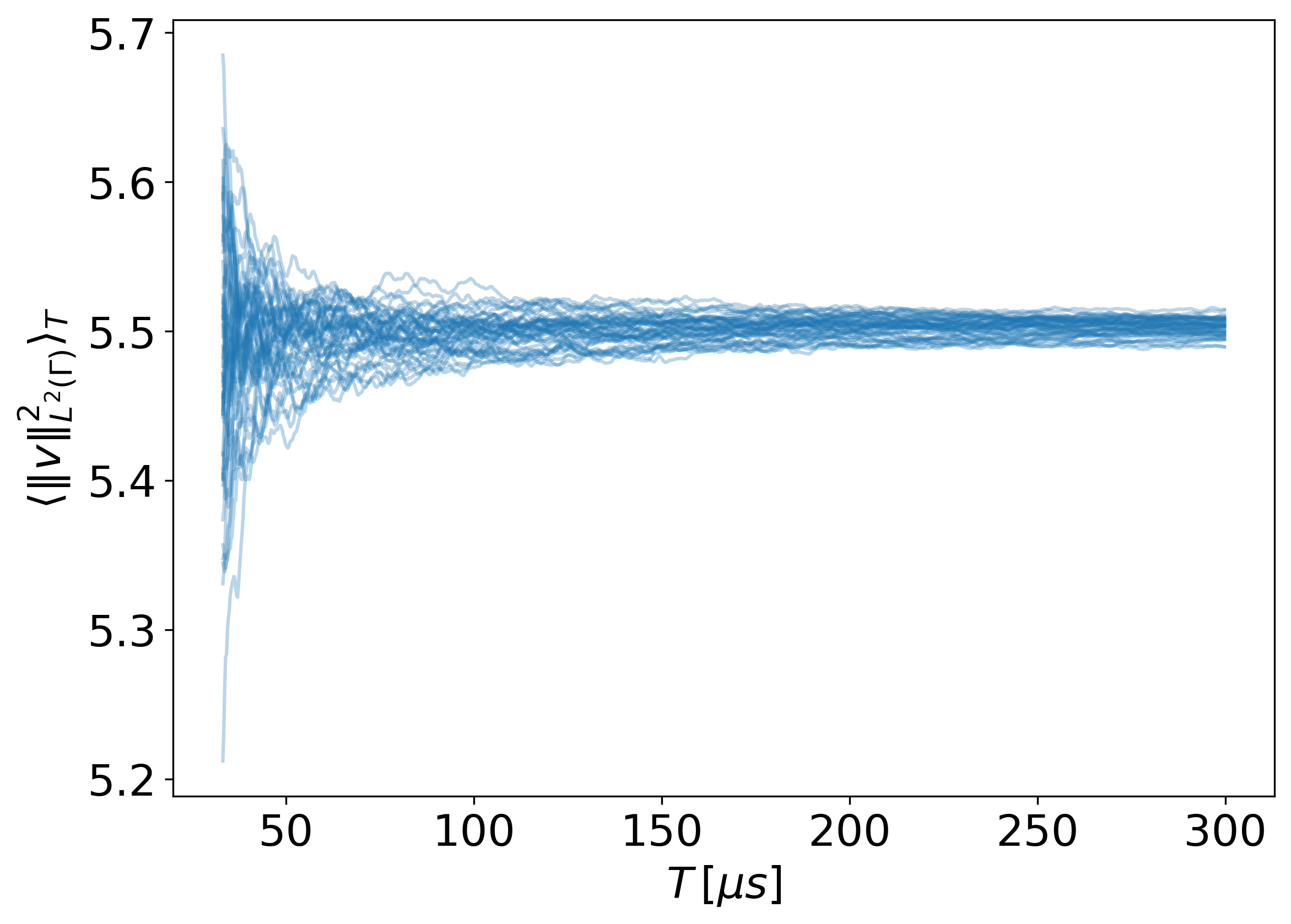}
    \caption{$\langle \|v\|_{L^2(\Gamma)}^2\rangle_{T}$ over $[1.1 T_{\rm burn-in}, T_{\rm final}]$.}
  \end{subfigure}\hfill
 \begin{subfigure}[t]{0.45\textwidth}
    \centering
    \includegraphics[width=\linewidth]{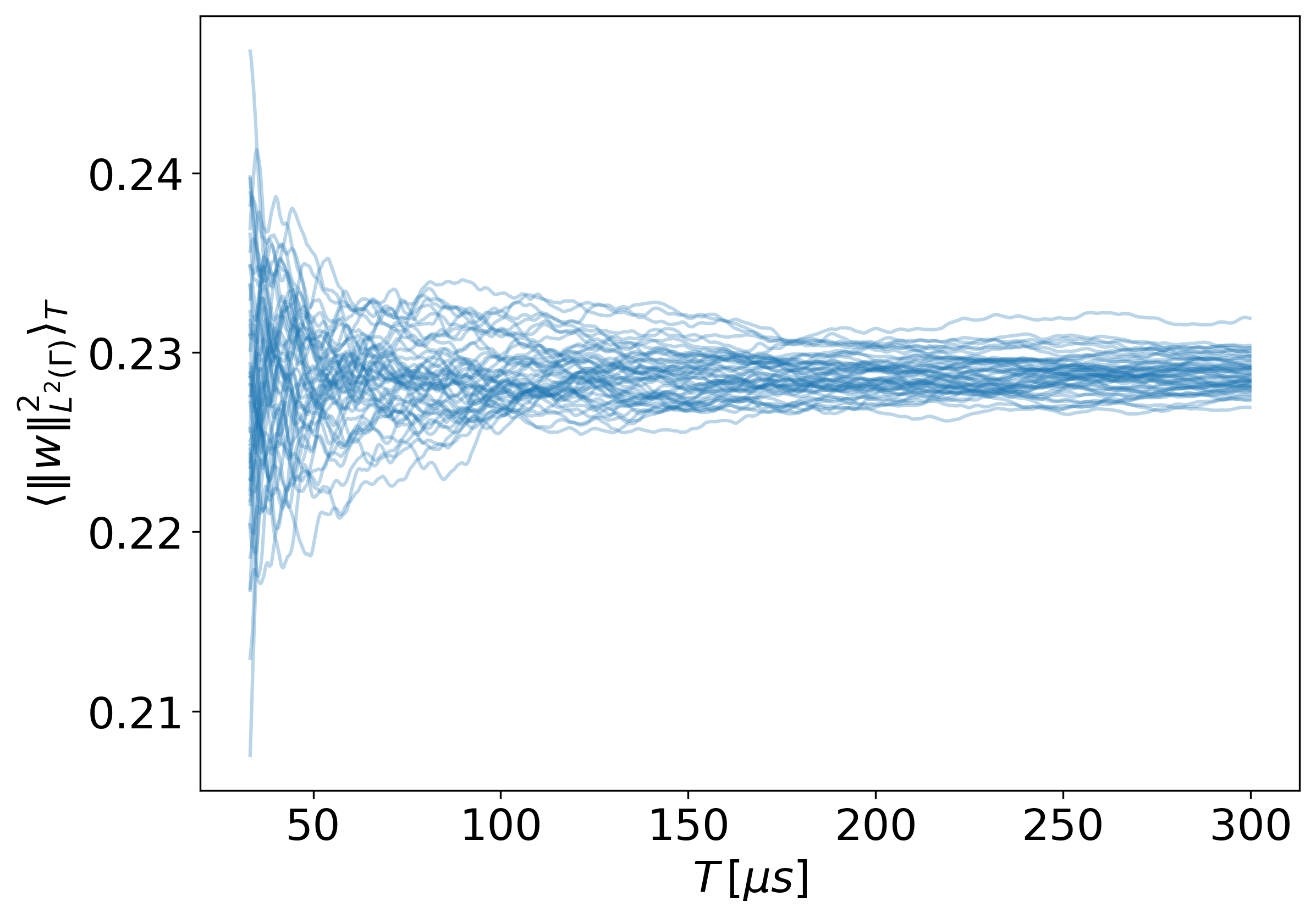}
    \caption{$\langle \|w\|_{L^2(\Gamma)}^2\rangle_{T}$ over $[1.1 T_{\rm burn-in}, T_{\rm final}]$.}
  \end{subfigure}
  \caption{Time averages in the case of additive noise. }
  \label{fig:time-average}
\end{figure}
\begin{figure}[!htb]
  \centering
  \begin{subfigure}[t]{0.45\textwidth}
    \centering
  \includegraphics[width=\linewidth]{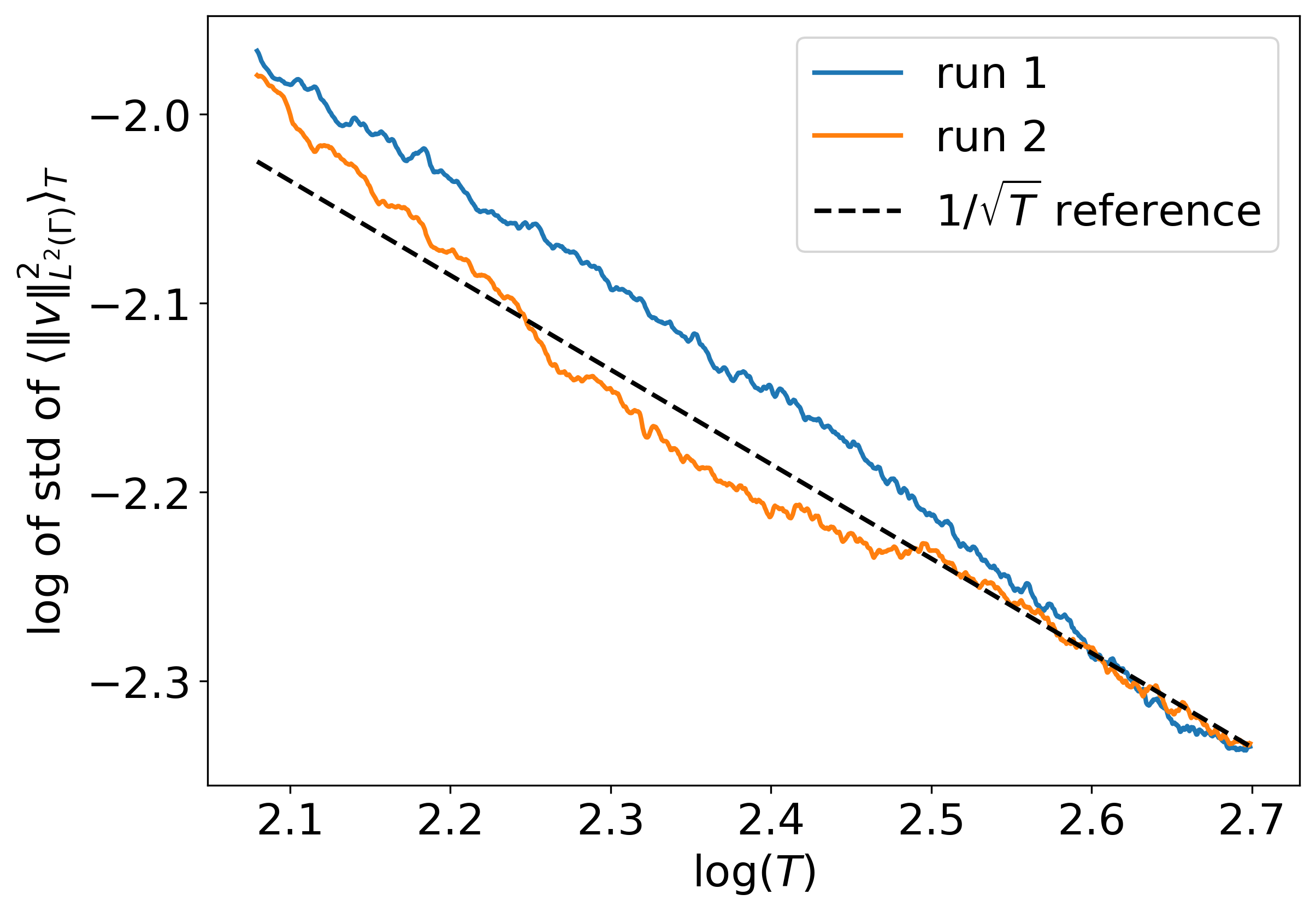}
  \end{subfigure}
  \hspace{1cm}
    \begin{subfigure}[t]{0.45\textwidth}
    \centering
    \includegraphics[width=\linewidth]{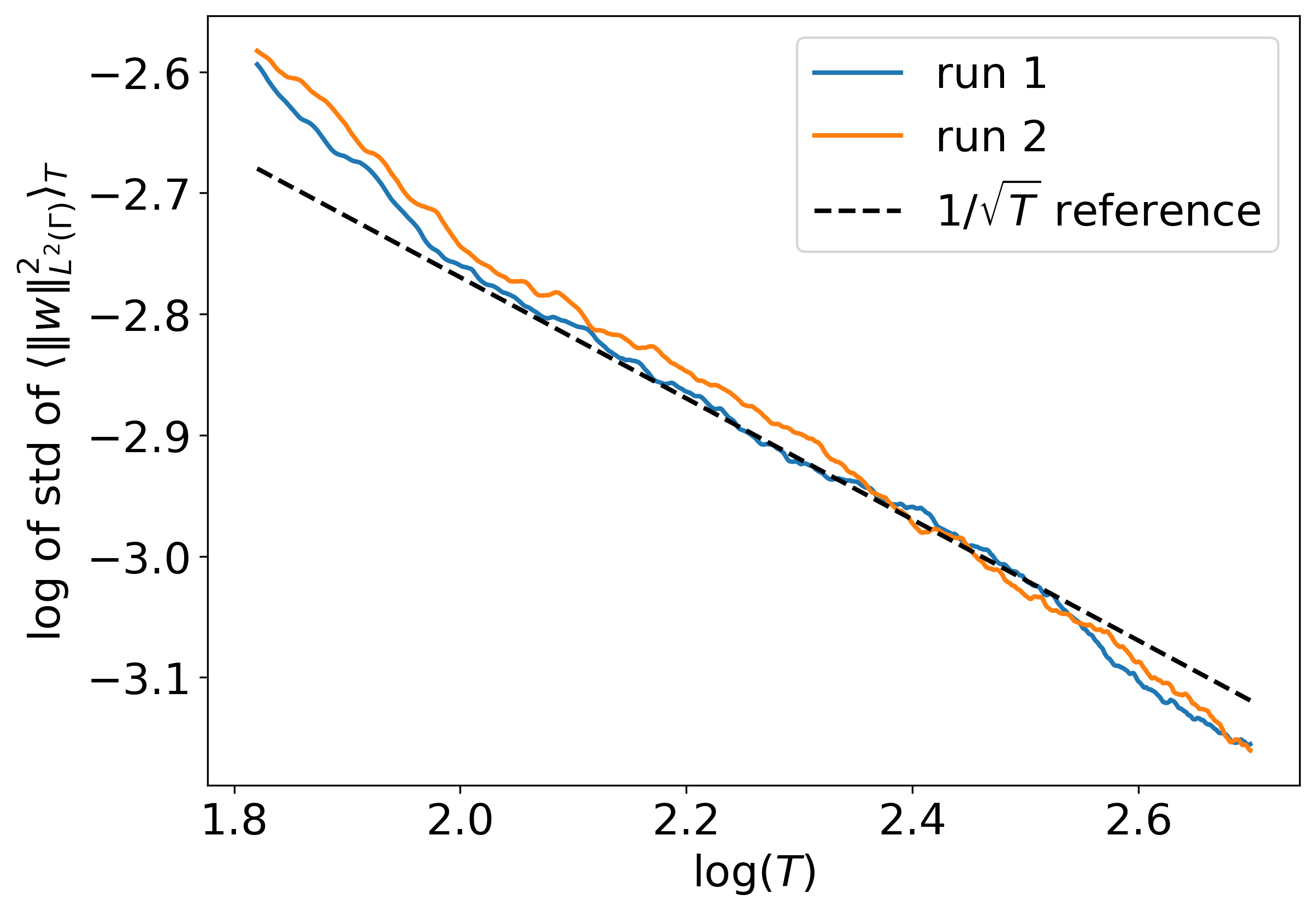}
\end{subfigure}
  \caption{Sample standard deviations of $\langle \|v\|_{L^2(\Gamma)}^2\rangle_{T}$ and $\langle \|w\|_{L^2(\Gamma)}^2\rangle_{T}$ in the case of additive noise for MC runs with 500 samples each, $T_{\rm burn-in}=30\, \mu$s, $T_{\rm final}=500\, \mu$s.}
  \label{fig:std_additive}
\end{figure}
In order to investigate numerically ergodicity of the system, we compute the time averages of $\|v(t,\cdot)\|_{L^2(\Gamma)}^2$ and $\|w(t,\cdot)\|_{L^2(\Gamma)}^2$:
\begin{align}
\label{eq:M_v_L2}
\begin{alignedat}{2}
&\langle \|v\|_{L^2(\Gamma)}^2\rangle_{T} = \frac{1}{T-T_{\rm burn-in}} \int_{T_{\rm burn-in}}^{T} \|v(t,\cdot)\|_{L^2(\Gamma)}^2\, \dd t,\\
&\langle \|w\|_{L^2(\Gamma)}^2\rangle_{T} = \frac{1}{T-T_{\rm burn-in}} \int_{T_{\rm burn-in}}^{T} \|w(t,\cdot)\|_{L^2(\Gamma)}^2\, \dd t.
\end{alignedat}
\end{align}
The time averages at the pole $\theta=\pi$ for $T_{\rm burn-in}=30\, \mu$s are shown in Figure \ref{fig:time-average} and are computed per realization.

The plots of the sample standard deviation of the time‑averaged $\|v\|_{L^2(\Gamma)}^2$ across trajectories (evaluated pointwise in time) for $\theta=\pi$ and the log-log plot are presented in Figure \ref{fig:std_additive}. One can see that the sample standard deviation decays, and the rate is approximately $-1/2$. Note that the sample standard deviation is random, and for independent Monte-Carlo runs we get clearly different curves, all of those share the same slope. 
These numerical simulations provide an indication for ergodicity.

\subsection{Multiplicative noise}
\begin{figure}[ht]
  \centering
  \begin{subfigure}[t]{0.45\textwidth}
    \centering
    \includegraphics[width=\linewidth]{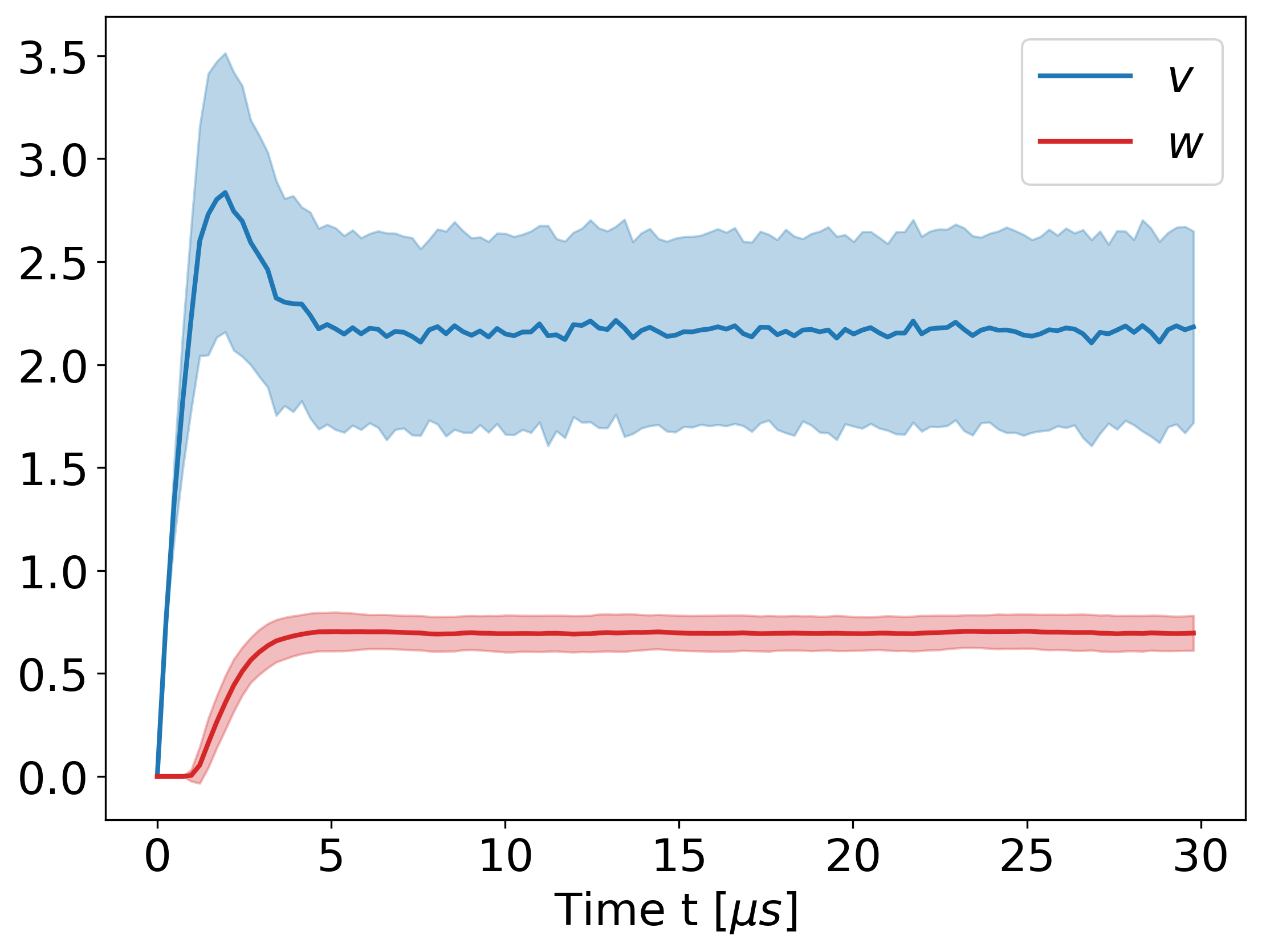}
    \caption{$v$ at the pole $\theta=\pi$.}
    \label{fig:v_vs_t_mult}
  \end{subfigure}\hfill
  \begin{subfigure}[t]{0.45\textwidth}
    \centering
    \includegraphics[width=\linewidth]{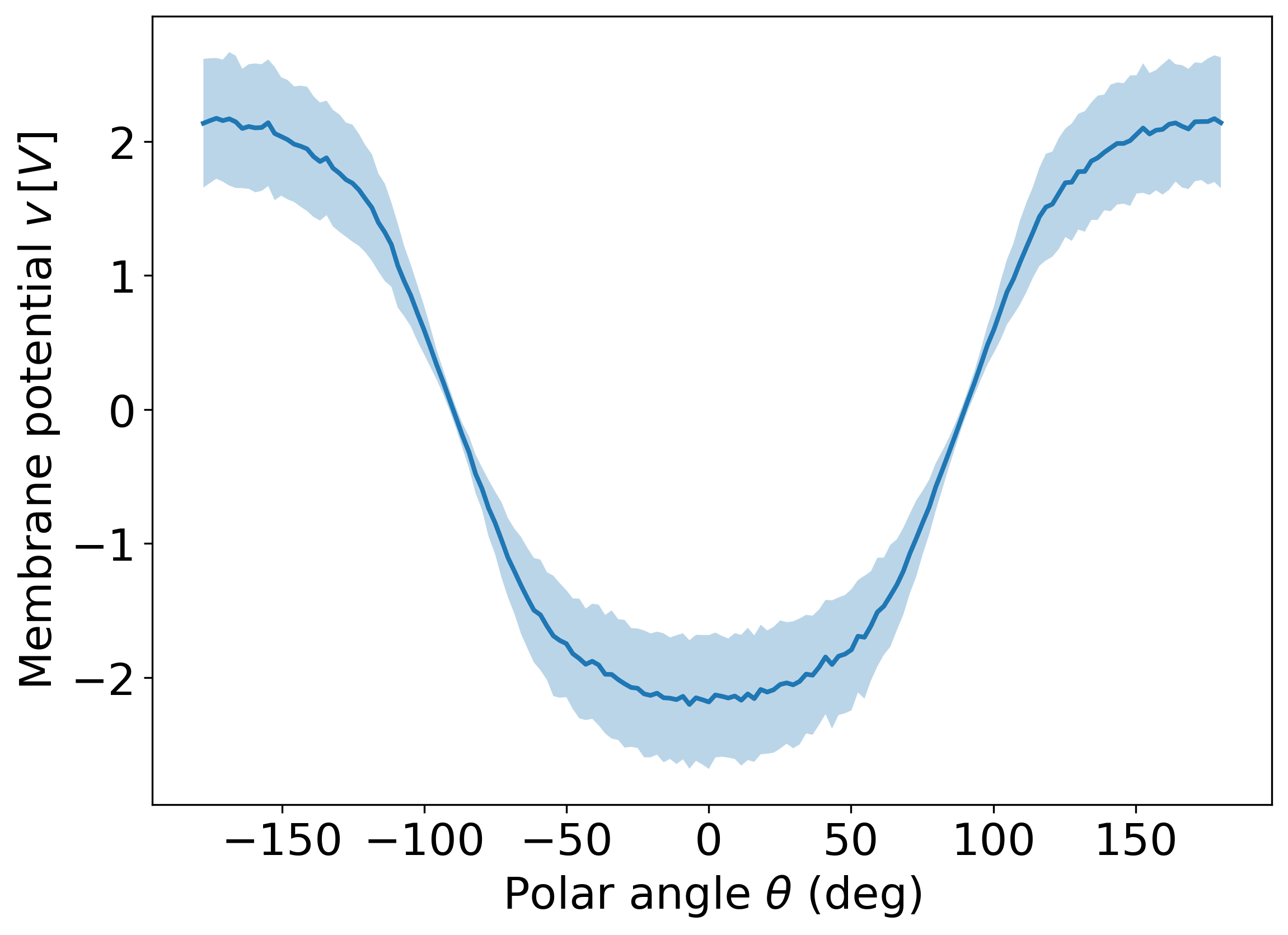}
    \caption{$v$ at $T_{\rm final}=500\, \mu$s.}
    \label{fig:v_vs_theta_mult}
  \end{subfigure}\hfill
  \caption{Membrane potential $v$ and degree of porosity $w$ in the case of linear multiplicative noise.}
  \label{fig:v_and_w_lin_multiplicative}
\end{figure}
\begin{figure}[!htb]
  \centering
  \begin{subfigure}[t]{0.45\textwidth}
    \centering
    \includegraphics[width=\linewidth]{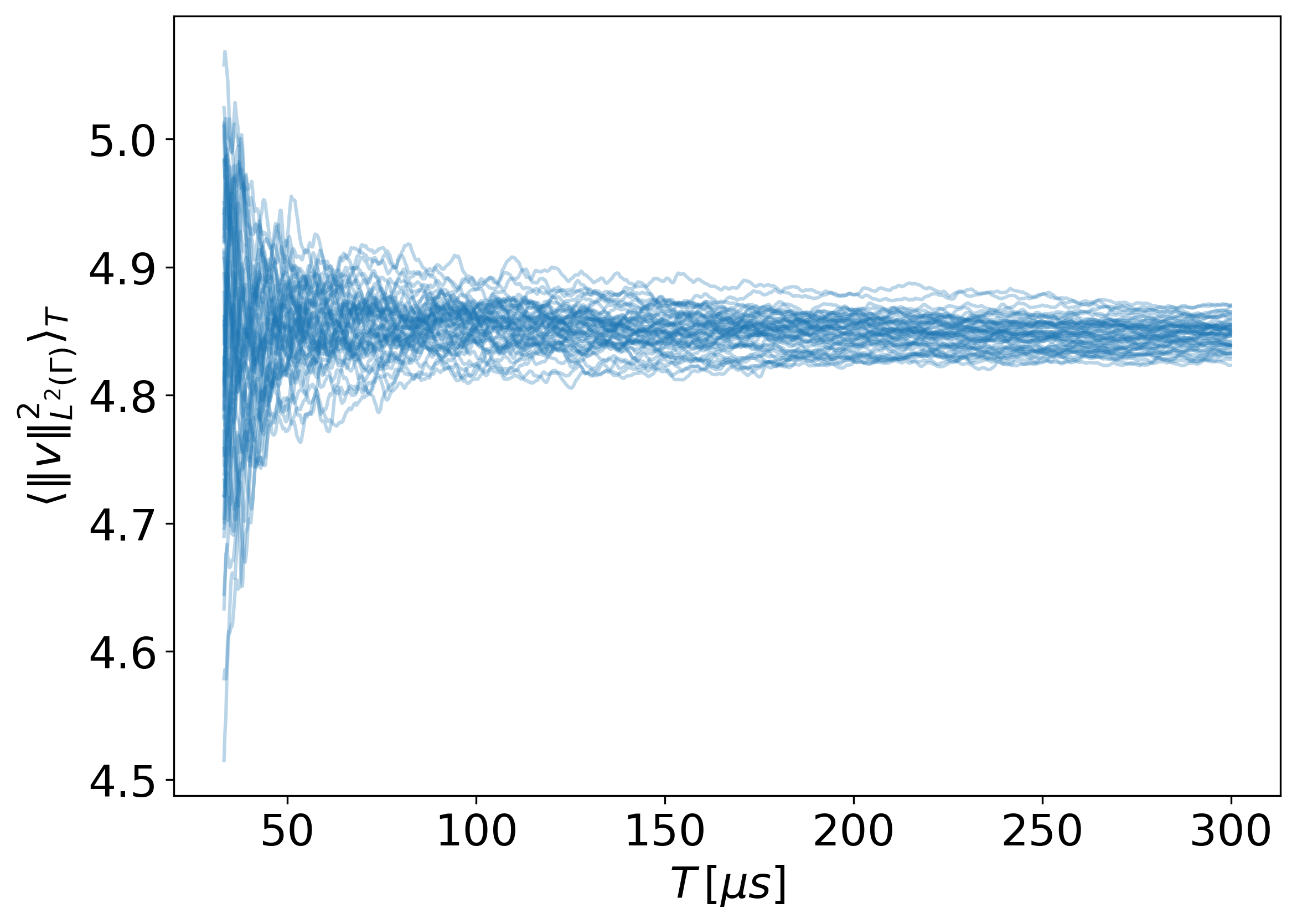}
    \caption{$\langle \|v\|_{L^2(\Gamma)}^2\rangle_{T}$ over $[1.1 T_{\rm burn-in},T_{\rm final}]$.}
    \label{fig:w_vs_t_mult}
  \end{subfigure}\hfill
 \begin{subfigure}[t]{0.45\textwidth}
    \centering
    \includegraphics[width=\linewidth]{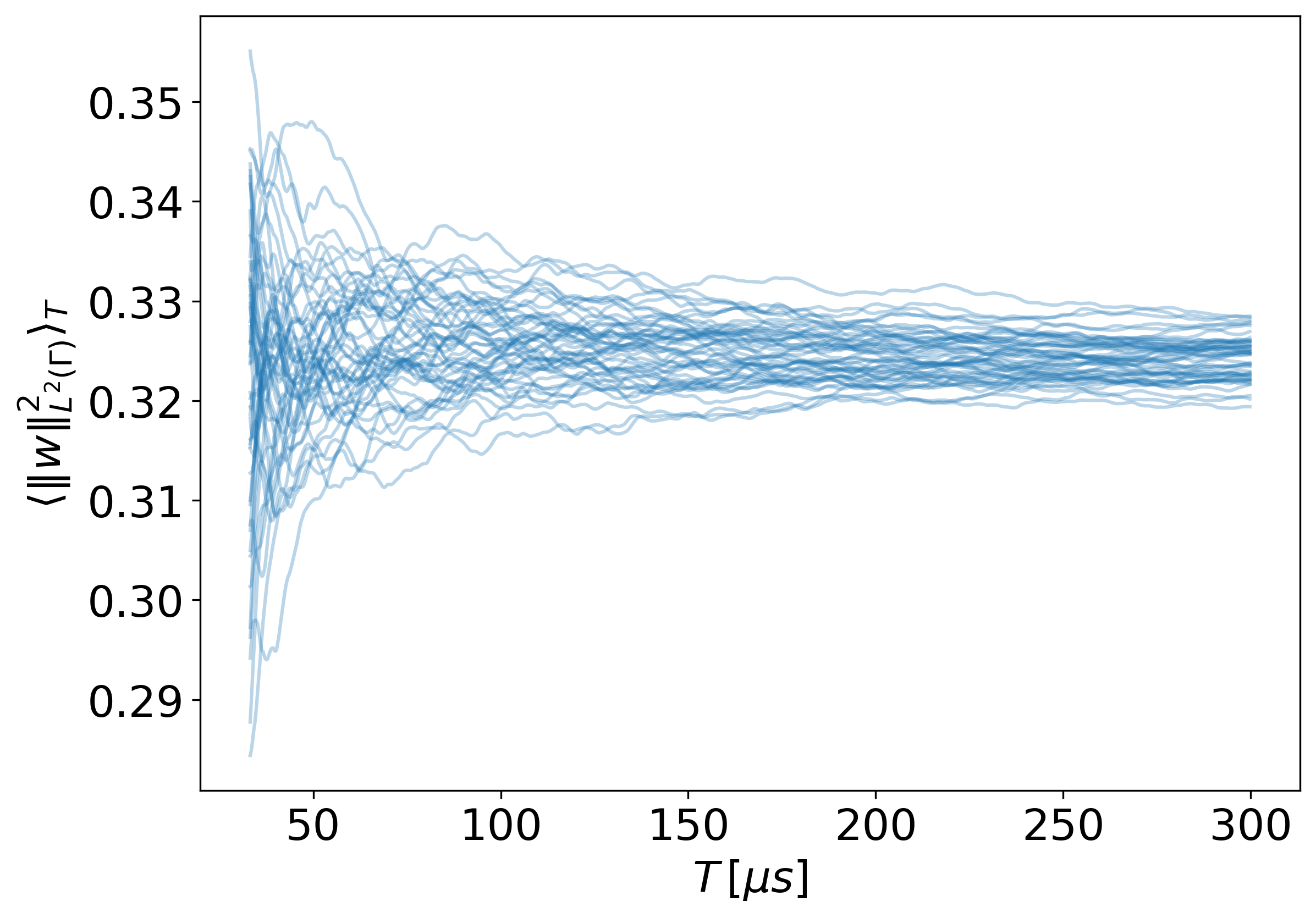}
    \caption{$\langle \|w\|_{L^2(\Gamma)}^2\rangle_{T}$ over $[1.1 T_{\rm burn-in},T_{\rm final}]$.}
\label{fig:time_averages_mult}
  \end{subfigure}
  \caption{Time averages in the case of linear multiplicative noise.}
\label{fig:time_avg_lin_multiplicative}
\end{figure}
We consider a multiplicative noise of the form $\dd W_t = \alpha \, v \, \dd \zeta(t)$, a linear multiplicative noise with $\alpha=0.5$, where $\zeta$ is a real-valued standard Brownian motion, as well as a multiplicative noise of the form $\dd W_t = \alpha \, \cos(v) \, \dd \zeta(t)$. We choose $T_{\rm burn-in}=50\, \mu$s and the final time $T_{\rm final}=300\, \mu$s. The solution $(v,w)$ is shown in Figure \ref{fig:v_and_w_lin_multiplicative} for the linear multiplicative noise and in Figure \ref{fig:v_and_w_multiplicative_cos} for $b(v)=\alpha \cos v$. The time averages for the linear multiplicative noise $\langle \|v\|_{L^2(\Gamma)}^2\rangle_{T}$ and $\langle \|w\|_{L^2(\Gamma)}^2\rangle_{T}$ defined by \eqref{eq:M_v_L2} are presented in Figure \ref{fig:time_avg_lin_multiplicative}. 
\begin{figure}[!htb]
    \centering
  \centering
  \begin{subfigure}[t]{0.45\textwidth}    \includegraphics[width=\linewidth]{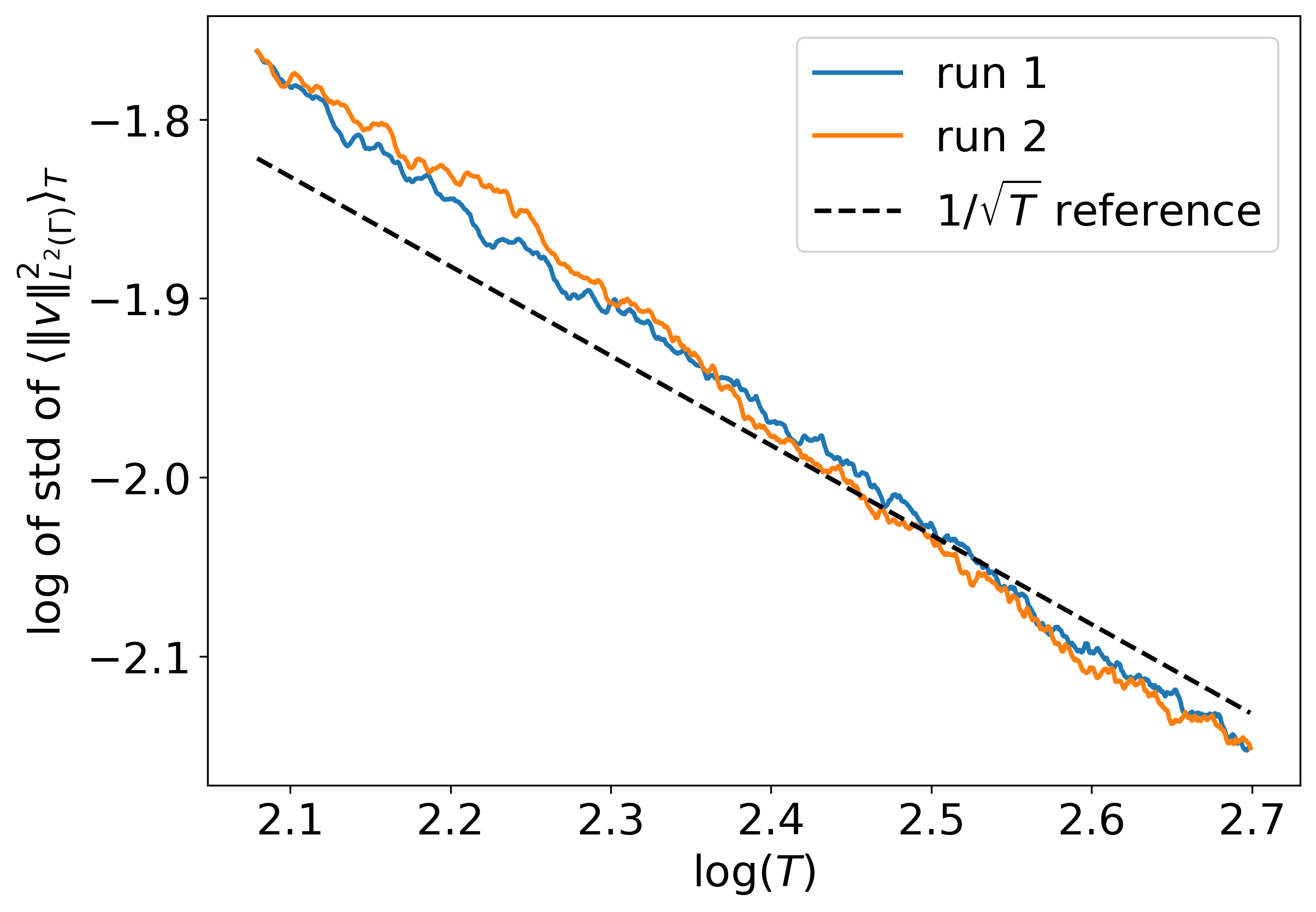}
  \end{subfigure}
\hspace{1cm}
  \begin{subfigure}[t]{0.45\textwidth}
    \includegraphics[width=\linewidth]{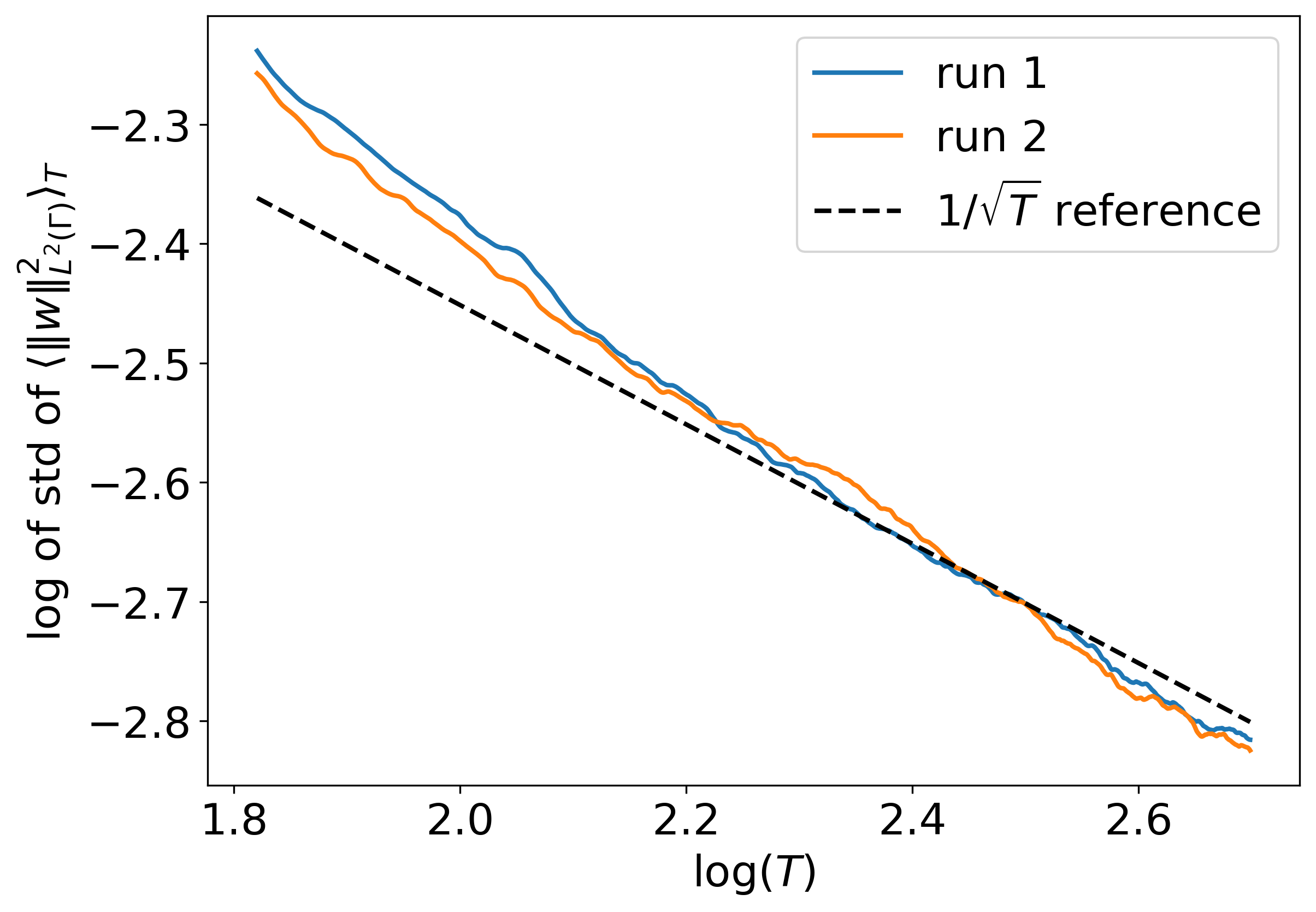}
    \end{subfigure}
  \caption{Sample standard deviation of $\langle \|v\|_{L^2(\Gamma)}^2\rangle_{T}$ and $\langle \|w\|_{L^2(\Gamma)}^2\rangle_{T}$ in the case of linear multiplicative noise for two MC runs, with 500 sample each, $T_{\rm burn-in}=30\, \mu$s, $T_{\rm final}=500\, \mu$s.}
  \label{fig:std_lin_multiplicative}
\end{figure}
\begin{figure}[!htb]
  \centering
  \begin{subfigure}[t]{0.45\textwidth}
    \centering
    \includegraphics[width=\linewidth]{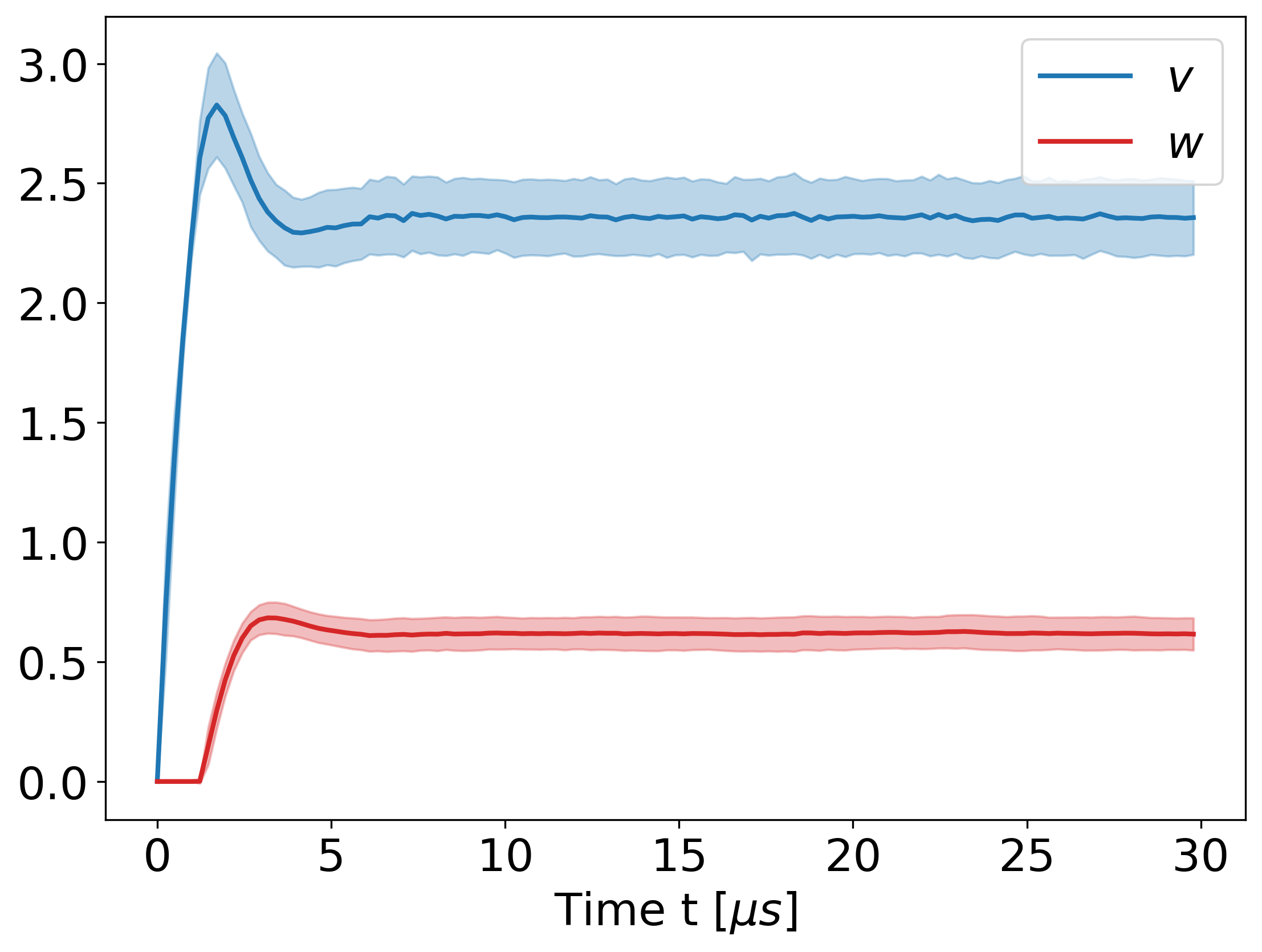}
    \caption{$v$ at the pole $\theta=\pi$.}
    \label{fig:v_vs_t_mult_cos}
  \end{subfigure}\hfill
  \begin{subfigure}[t]{0.45\textwidth}
    \centering
    \includegraphics[width=\linewidth]{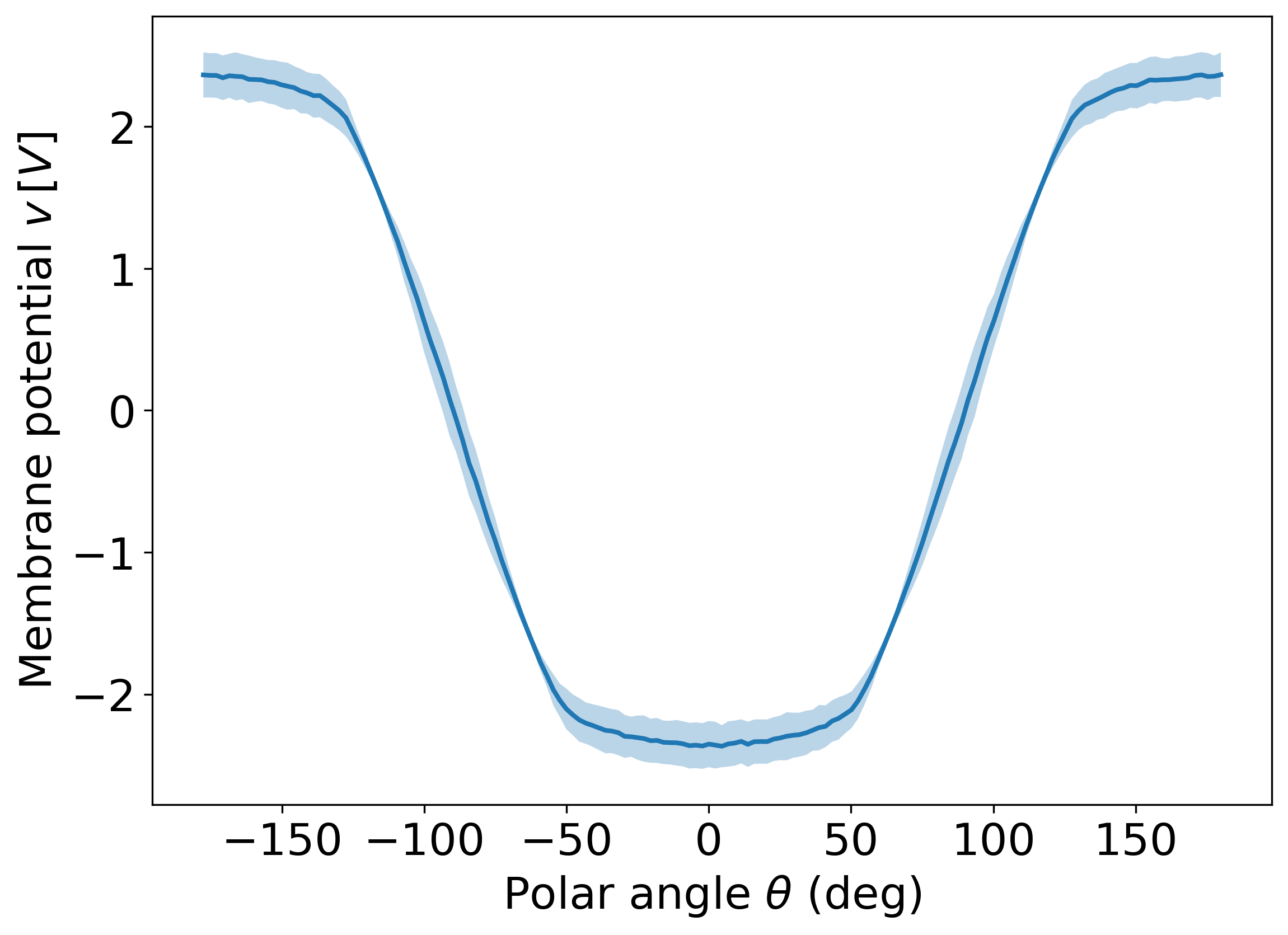}
    \caption{$v$ at $T_{\rm final}=500\, \mu$s.}
    \label{fig:v_vs_theta_mult_cos}
  \end{subfigure}\hfill
  \caption{Membrane potential $v$ and degree of porosity $w$ in the case of the multiplicative noise $b(v)=\cos(v)$.}
  \label{fig:v_and_w_multiplicative_cos}
\end{figure}
The $\log-\log$ plots of the sample standard deviations are shown in Figure~\ref{fig:std_lin_multiplicative} and Figure~\ref{fig:std_multiplicative_cos}. We see that, as for additive noise, the time averages stabilize, and the sample standard deviation decays with the rate $T^{-1/2}$, as $T\to \infty$, consistent with the behavior predicted by ergodic averaging. The curves corresponding to the bounded multiplicative noise $b(v) = \alpha \cos(v)$ appear smoother and exhibit better agreement between independent Monte Carlo runs than those obtained for linear multiplicative noise. This is likely due to the boundedness of the cosine coefficient, which restricts the magnitude of the stochastic forcing and reduces the impact of large fluctuations on finite-sample statistics. 
\begin{figure}[!htb]
    \centering
  \centering
  \begin{subfigure}[t]{0.45\textwidth}    \includegraphics[width=\linewidth]{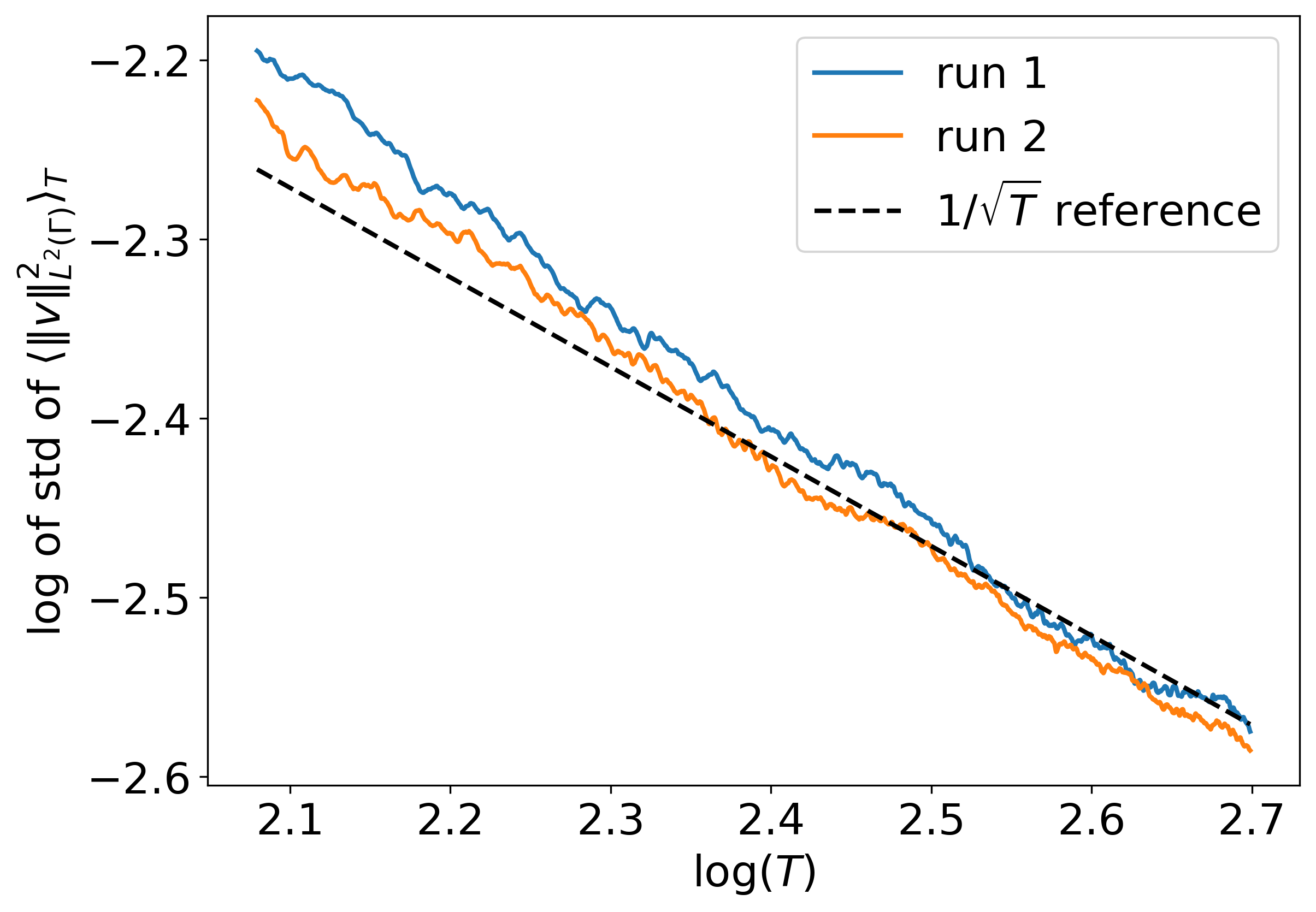}
  \end{subfigure}
\hspace{1cm}
  \begin{subfigure}[t]{0.45\textwidth}
    \includegraphics[width=\linewidth]{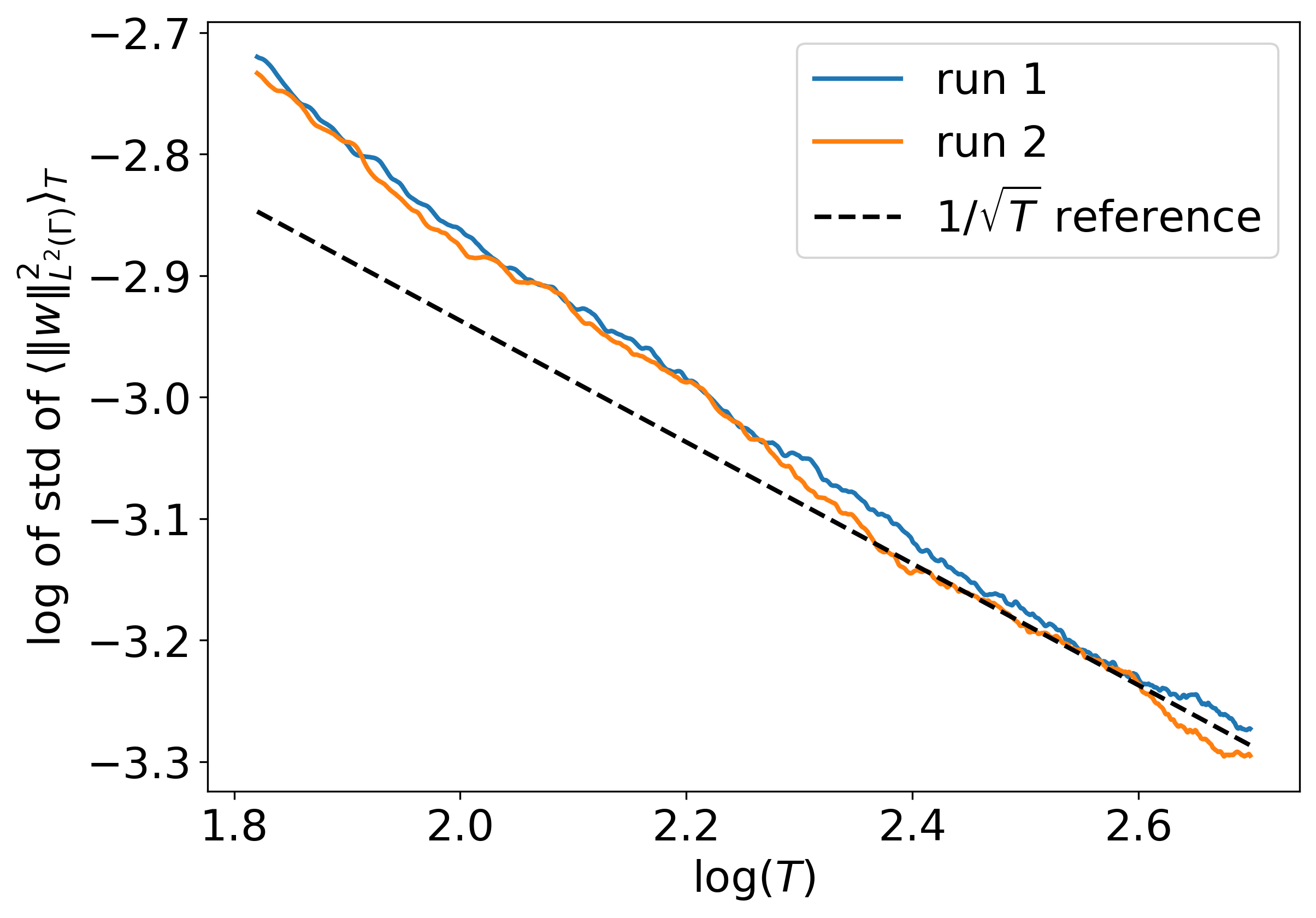}
    \end{subfigure}
  \caption{Sample standard deviation of $\langle \|v\|_{L^2(\Gamma)}^2\rangle_{T}$ and $\langle \|w\|_{L^2(\Gamma)}^2\rangle_{T}$ in the case of  multiplicative noise $b(v)=\cos(v)$ for two MC runs, with 500 sample each, $T_{\rm burn-in}=30\, \mu$s, $T_{\rm final}=500\, \mu$s.}
  \label{fig:std_multiplicative_cos}
\end{figure}

\section*{Acknowledgements}
A part of this work was done during the stay of I. Motschan Ulander and I. Pettersson at VCU, whose hospitality is greatly appreciated. The research stay at VCU was supported by the Barbro Osher Pro Suecia Foundation. O. Misiats visit to Chalmers was funded by Olle Engkvist's project (No. 227-0235). The research of  O. Misiats was supported by Simons Collaboration
Grant for Mathematicians No. 854856 and National Science Foundation Grant DMS-2408507.

The authors thank O. Stanzhytskii for stimulating discussions on the results. 

\vspace{-0.5cm}

\bibliographystyle{abbrv}
\bibliography{refs-noise}

\end{document}

%% file: multicells.pdf_tex
\begingroup%
  \makeatletter%
  \providecommand\color[2][]{%
    \errmessage{(Inkscape) Color is used for the text in Inkscape, but the package 'color.sty' is not loaded}%
    \renewcommand\color[2][]{}%
  }%
  \providecommand\transparent[1]{%
    \errmessage{(Inkscape) Transparency is used (non-zero) for the text in Inkscape, but the package 'transparent.sty' is not loaded}%
    \renewcommand\transparent[1]{}%
  }%
  \providecommand\rotatebox[2]{#2}%
  \newcommand*\fsize{\dimexpr\f@size pt\relax}%
  \newcommand*\lineheight[1]{\fontsize{\fsize}{#1\fsize}\selectfont}%
  \ifx\svgwidth\undefined%
    \setlength{\unitlength}{427.88975513bp}%
    \ifx\svgscale\undefined%
      \relax%
    \else%
      \setlength{\unitlength}{\unitlength * \real{\svgscale}}%
    \fi%
  \else%
    \setlength{\unitlength}{\svgwidth}%
  \fi%
  \global\let\svgwidth\undefined%
  \global\let\svgscale\undefined%
  \makeatother%
  \begin{picture}(1,0.80264325)%
    \lineheight{1}%
    \setlength\tabcolsep{0pt}%
    \put(0,0){\includegraphics[width=\unitlength,page=1]{multicells.pdf}}%
    \put(0.72150219,0.21682999){\color[rgb]{0,0,0}\makebox(0,0)[lt]{\lineheight{1.25}\smash{\begin{tabular}[t]{l}$\mathbf{n}$\end{tabular}}}}%
    \put(0.30106346,0.28246119){\color[rgb]{0,0,0}\makebox(0,0)[lt]{\lineheight{1.25}\smash{\begin{tabular}[t]{l}$G_i$\end{tabular}}}}%
    \put(0.44531815,0.10036929){\color[rgb]{0,0,0}\makebox(0,0)[lt]{\lineheight{1.25}\smash{\begin{tabular}[t]{l}$G_e$\end{tabular}}}}%
    \put(0,0){\includegraphics[width=\unitlength,page=2]{multicells.pdf}}%
    \put(0.80240703,0.69630568){\color[rgb]{0,0,0}\makebox(0,0)[lt]{\lineheight{1.25}\smash{\begin{tabular}[t]{l}$\partial G$\end{tabular}}}}%
    \put(0,0){\includegraphics[width=\unitlength,page=3]{multicells.pdf}}%
    \put(0.60042291,0.60424414){\color[rgb]{0,0,0}\makebox(0,0)[lt]{\lineheight{1.25}\smash{\begin{tabular}[t]{l}$\Gamma$\end{tabular}}}}%
  \end{picture}%
\endgroup%